\documentclass{amsart}

\hyphenation{mar-gin-al-ia}
\hyphenation{bra-va-do}

\usepackage{amssymb}
\usepackage{amsthm}
\usepackage{amsrefs}
\usepackage{mathtools}
\usepackage[hidelinks]{hyperref}
\usepackage[super]{nth}
\usepackage{cancel}
\usepackage{tikz-cd}
\usetikzlibrary{decorations.pathmorphing}
\usepackage{microtype}

\theoremstyle{plain}
\newtheorem{theorem}{Theorem}[section]
\newtheorem{lemma}[theorem]{Lemma}
\newtheorem{proposition}[theorem]{Proposition}
\newtheorem{corollary}[theorem]{Corollary}
\newtheorem{question}[theorem]{Question}

\theoremstyle{definition}
\newtheorem{definition}[theorem]{Definition}
\newtheorem{example}[theorem]{Example}

\theoremstyle{remark}
\newtheorem{remark}[theorem]{Remark}

\numberwithin{equation}{section}

% General commands

\DeclareMathOperator{\R}{\mathbb{R}}

\DeclareMathOperator{\E}{\mathbb{E}}
\DeclareMathOperator{\GL}{GL}
\DeclareMathOperator{\SL}{SL}
\DeclareMathOperator{\Hom}{Hom}
\DeclareMathOperator{\Gr}{Gr}
\DeclareMathOperator{\Sym}{S}
\DeclareMathOperator{\tr}{tr}

\DeclareMathOperator{\I}{\mathcal{I}}
\DeclareMathOperator{\K}{\mathcal{K}}
\DeclareMathOperator{\Id}{I}
\DeclareMathOperator{\contact}{\mathcal{C}}

\newcommand{\coframeBundle}{\mathcal{F}}
\renewcommand{\d}{\ensuremath{\hspace{2pt} d}}
\newcommand{\dd}[1]{\mathop{d#1}}

% Document specific commands
\DeclareMathOperator{\SO}{SO}
\newcommand{\cinfinity}[1]{C^\infty\left(#1\right)}

\renewcommand{\mod}[1]{\hspace{4mm}\left(\mbox{mod } #1\right)}

\newcommand{\pDeriv}[2]{\frac{\partial #1}{\partial #2}}
\newcommand{\trans}[1]{{}^t\! #1}

\newcommand{\w}{{\mathchoice{\,{\scriptstyle\wedge}\,}{{\scriptstyle\wedge}}
      {{\scriptscriptstyle\wedge}}{{\scriptscriptstyle\wedge}}}}

%For the G-structure
\newcommand{\gBun}{\mathcal{B}}
%For prolongations of M

%For prolongations of \I

%For global 'theta sub'
\newcommand{\thetas}[1]{\theta_{#1}}
%For basic 'theta sub'
\newcommand{\thetasu}[1]{\underline{\theta}_{#1}}
%  For underlined \thetasu{\varnothing}
\newcommand{\thetanotu}{\thetasu{\varnothing}}
%For global 'pi sub'
\newcommand{\pis}[1]{\pi_{#1}}
%For basic 'pi sub'
\newcommand{\pisu}[1]{\underline{\pi}_{#1}}
%For basic coframing \eta
\newcommand{\etu}{\underline{\eta}}
%For basic omega
\newcommand{\omegu}{\underline{\omega}}
%For product of omegas less some terms "hat omega"

%For basic product of omegas less some terms "hat omega"
\newcommand{\omegasu}[1]{\underline{\omega}_{(#1)}}
%For a change of coframing
\newcommand{\newcoframe}[1]{\widetilde{#1}}
%For the principal weight filtration

%For the sub-principal weight filtration

%For the subprincipal weight of a form

%For the principal weight of a form

%For the horizontal derivative

%For the vertical derivative

%For the horizontal derivative of functions

%For the Goursat matrix
\newcommand{\goursam}{\hat{a}}
%For the sub-bundle of semi-basic forms
\newcommand{\sbforms}{\Omega_{sb}}
%For bar over differential

%For bar over horizontal differential

%For characteristic cohomology

%For the Cartan system of \thetas{0,0}
\newcommand{\cartTheta}{\mathcal{J}}
%For the finite type functions on $\M{\infty}$

%For the jet bundle
\newcommand{\jets}{J^2}
%For the EDS reduced de Rham complex bar Omega

% Jet bundle

% Irrep of generalized Bianchi symmetry.
\newcommand{\bianchi}{\mathfrak{b}}

\begin{document}

% \title[short text for running head]{full title}
\title{Geometry and Conservation Laws for a Class of Second-Order Parabolic Equations I: Geometry}

%    author one information
% \author[short version for running head]{name for top of paper}
\author{Benjamin B. McMillan}
\address{Math Department, University of Adelaide}
\email{benjamin.mcmillan@adelaide.edu.au}
\thanks{This material is based upon work supported by the National Science Foundation under Grant No. 74341.2010}
\subjclass[2010]{Primary 35K55, 58A15, Secondary 35K96}

\date{}

\pagestyle{plain}

\begin{abstract}
I consider the geometry of the general class of scalar \nth{2}-order differential equations with parabolic symbol, including non-linear and non-evolutionary parabolic equations. After defining the appropriate $G$-structure to model parabolic equations, I apply Cartan techniques to determine local geometric invariants (quantities invariant up to a generalized change of variables). One family of invariants gives a geometric characterization for parabolic equation of Monge-Amp\`ere type. A second family of invariants determines when a parabolic equation has a local choice of coordinates putting it in evolutionary form.

In addition to their intrinsic interest, these results are applied in a follow up paper on the conservation laws of parabolic equations. It is shown there that conservation laws for any evolutionary parabolic equation depend on at most second derivatives of solutions. As a corollary, the only evolutionary parabolic equations with at least one non-trivial conservation law are of Monge-Amp\`ere type.
\end{abstract}

\maketitle

\section*{Introduction}
\subsection{Geometry}
The goal of this paper is to study the geometry of differential equations of parabolic type. I define \emph{parabolic systems}---exterior differential systems that are (locally) equivalent to parabolic scalar PDE---in arbitrarily many variables, and make progress on their equivalence problem. The intention is to further our understanding of the geometry of  parabolic equations, so as to provide insight into their structure. For example, in a follow up to this paper, the geometric understanding developed here is applied to show some quite general results on the conservation laws of parabolic equations.

The primary tool used here is Cartan's method of equivalence, a general approach to solving the equivalence problem for geometric structures. If one is handed a class of geometric structure on real-analytic manifolds (eg. Riemannian, almost complex, CR), the equivalence method prescribes how to classify the local invariants of the geometry, which are special functions that distinguish non-isomorphic structures (eg. Riemannian curvature, the Nijenhuis tensor, the Levi form, respectively). Two good references, full of examples, are Gardner's \emph{Method of Equivalence and its Applications} \cite{Gardner:MethodOfEquivalence} and Bryant, Griffiths and Grossman's excellent \emph{Exterior Differential Systems and Euler-Lagrange Partial Differential Equations} \cite{BryantGriffithsGrossman:EDSandEulerLagrangePDEs}. It is worth noting that the primary tool used to classify these invariants is Cartan-K\"ahler Theory, which depends on real analyticity. However, the real-analytic invariants so developed describe smooth invariants as well.

As always in Cartan problems, there is a Lie group associated to the geometry of parabolic equations. The Lie algebra of this group controls the invariants, via the cohomology of the Spencer complex associated to the Lie algebra. A secondary tool used here is the real representation theory of this group, and in particular of its subgroup $\SO(n)$, where $n+1$ is the number of independent variables in the given parabolic equation. The representation theory serves to organize and simplify, allowing for proofs in arbitrarily many dimensions where otherwise the calculations would get out of hand.

To treat differential equations as geometric objects, I consider them as exterior differential systems.
\begin{definition}
  An \emph{exterior differential system} $(M,\I)$ is a smooth manifold $M$ and a graded, differentially closed ideal $\I$ in $\Omega^*(M)$.

  A submanifold $\iota\colon \Sigma \hookrightarrow M$ is an \emph{integral manifold} of $(M,\I)$ if the pullback $\iota^*\I$ is identically zero, or equivalently, if $\phi|_{T_x\Sigma} = 0$ for all $\phi \in \I$ and $x\in \Sigma$.

  An \emph{integral element} at a point $x \in M$ is a subspace $E \subset T_xM$ for which $\phi|_E = 0$ for all $\phi \in \I$.
\end{definition}

Every sufficiently non-degenerate system of partial differential equations corresponds naturally to an exterior differential system. Furthermore, under this correspondence, the graphs of solutions to a PDE are naturally identified with integral submanifolds of the associated EDS.
% Typically, for a PDE in $n$ variables, the associated exterior differential system will have dimension much higher than $n$, while integral manifolds are $n$-dimensional.
Very briefly, to a sufficiently non-degenerate differential equation
of order $k$, on $\R^s$-valued functions $u$ of $n$ independent variables $x^a$,
\begin{equation}\label{eq:general PDE}
  F\left(x^a,u,\pDeriv{u}{x^a},\pDeriv{^2 u}{x^a\partial x^b},\ldots\right) = 0 ,
\end{equation}
associate the submanifold
\[ M = F^{-1}(0) \subset J^k(\R^n,\R^s) , \]
where is $F$ considered as a function on the jet space $J^k(\R^n,\R^s)$. The jet space has a canonical Pfaffian ideal $\mathcal{C}$, and denoting by $\I$ the pullback of $\mathcal{C}$ to $M$, the exterior differential system $(M,\I)$ has integral submanifolds that correspond to the graphs of solutions to $F$. To be precise, the $n$-dimensional integral submanifolds that submerse onto $\R^n$ are locally the graphs of solutions. This process is explained in some detail for parabolic equations in Example \ref{ex:jet formulation of parabolic} below.

Exterior differential systems form a category, where a morphism from $(M,\I)$ to $(M',\I')$ is a smooth map $f\colon M \to M'$ that pulls back $\I'$ to a subset of $\I$. Within this category, an integral manifold $\Sigma$ of $(M,\I)$ is simply an EDS-embedding $\varphi\colon (\Sigma,\{0\}) \to (M,\I)$. It is occasionally useful to drop the condition that integral manifolds are embeddings. Then morphisms are characterized by the condition that they push forward solutions of $(M,\I)$ to solutions of $(M',\I')$. The isomorphisms in this category are exactly the maps that preserve the structure of solutions.

\begin{definition}\label{def:eds equivalence}
  An \emph{equivalence} of exterior differential systems $(M,\I)$ and $(M',\I')$ is a diffeomorphism  $f\colon M \to M'$ for which $f^*\I' = \I$.
\end{definition}

One class of example of EDS equivalences is given by the \emph{point transformations}---equivalences induced by changes of variables of a PDE. More precisely, a change of coordinates
\[ x^i,u^a \xrightarrow{\qquad} \tilde x^i(x^i) , \tilde u^a(x^i,u^a) \]
transforms $F$ as in equation \eqref{eq:general PDE} into a new equation $\tilde F$. Then the following diagram is functorial:
\begin{equation*}%\label{diag:EDS functor}
  \begin{tikzcd}[row sep = large, column sep=23mm]
    F(x^i,u^a,\pDeriv{u^a}{x^i},\pDeriv{^2 u^a}{x^i \partial x^j},\ldots) = 0 \ar[r, decorate, decoration={zigzag, amplitude=.9}, "\text{EDS functor}"]\arrow{d}[description]{\text{Change of Variables}} & (M,\I) \arrow{d}[description]{\text{EDS equivalence}} \\
    \tilde F(\tilde x^i,\tilde u^a,\pDeriv{\tilde{u^a}}{\tilde{x}^i},\pDeriv{^2 \tilde{u^a}}{\tilde{x}^i \partial \tilde{x}^j},\ldots) = 0 \ar[r, decorate, decoration={zigzag, amplitude=.9}, "\text{EDS functor}"] & (\tilde M, \tilde \I)
  \end{tikzcd}
\end{equation*}
A change of variables on the left takes any solution of $F$ to one of $\tilde F$ and the corresponding EDS equivalence maps the graph of the solution to $F$ to the graph of the solution of $\tilde F$.

Not all EDS equivalences come from point transformations. For example, consider the map from $J^1(\R^n,\R) \cong \R^n \times \R \times \R^n$ to itself given in coordinates by
\[ \varphi(x^i,u,p_i) = (p_i, x^i p_i - u, x^i) . \]
It is straightforward to check that $\varphi$ pulls back the contact ideal $\mathcal{C} = \{ \dd{u} - p_i \dd{x^i} \}$ to itself, and so induces an EDS automorphism of $(J^1(\R^n,\R),\mathcal{C})$. As a consequence, $\varphi$ induces a morphism from any \nth{1}-order PDE to a new exterior differential system in such a way that solutions are taken to solutions. For example, $\varphi$ takes the EDS induced by the equation $\pDeriv{u}{x^i}\pDeriv{u}{x^i} = x_1$ to the EDS induced by the equation $\pDeriv{u}{x^1} = x^i x^i$. Solutions to the second equation can be pushed forward by $\varphi^{-1}$ to give solutions to the first. Since this transformation switches position and derivative variables, it cannot come from any point transformation.

This example also demonstrates that linearity of PDE is not preserved by EDS transformations. In the category of exterior differential systems, an equation can at best be said to be \emph{linearizable}. Note that this is not a defect of EDS equivalences: the same holds for more classical changes of variable, such as point transformations.

Now consider the following.
\begin{question}\label{que:When are PDE equivalent?}
  When are two partial differential equations $F$ and $\tilde F$ related by a change of variables?
\end{question}
The correspondence between PDEs and EDSs suggests the following more geometric question, whose answer essentially answers Question \ref{que:When are PDE equivalent?}.
\begin{question}\label{que:When are EDS equiv?}
  When are two given exterior differential systems related by an EDS equivalence?
\end{question}

The ideal $\I$ is extra geometric structure on $M$ that encodes the structure of solutions, so study of the geometry of $(M,\I)$ provides insight into this question. This is where the method of equivalence comes in, providing invariants that can distinguish between exterior differential systems. But knowledge of the geometry of a differential equation also informs our understanding of the solutions of the differential equation. Bryant, Griffiths and Hsu give a good general overview of this philosophy in the monograph \emph{Toward a Geometry of Differential Equations}, \cite{BryantGriffithsHsu:TowardAGeometryofDifferentialEquations}.

The symbol of a differential equation is a well known example of this. For example, in studying the geometry of the EDS $(M,\I)$ associated to a (non-linear) second-order differential equation, the first invariant one finds is the geometric principal symbol, which can locally be given by a symmetric-matrix valued function on $M$. The signature of this symbol-matrix controls the behaviour of solutions, as is well known for the classical division of linear second-order equations into elliptic, hyperbolic and parabolic classes.

In this paper I provide some answers to Question \ref{que:When are EDS equiv?} in the specific case of systems arising from scalar parabolic equations. I begin by studying the most general class of scalar \nth{2}-order equations that could fairly be called parabolic: the class of PDE for $\R$-valued indeterminate $u$ given by a single equation of the form
\begin{equation}\label{eq:2nd order PDE F first}
  F\left(x^a,u,\pDeriv{u}{x^a},\pDeriv{^2 u}{x^a\partial x^b}\right) = 0 \qquad{a, b = 0,\ldots n}
\end{equation}
whose \emph{geometric principal symbol} is positive semi-definite with 1-dimensional kernel. In more concrete terms, a PDE such as Equation \ref{eq:2nd order PDE F first}  is \emph{weakly parabolic} if its linearization at any $2$-jet of a solution is parabolic in the classical sense. If a parabolic equation is furthermore \emph{evolutionary}, i.e. of the form
\begin{equation}\label{eq:evolution eq}
  \pDeriv{u}{x^0} = F\left(x^0,x^i,u,\pDeriv{u}{x^i},\pDeriv{^2 u}{x^i\partial x^j}\right)  \qquad{i,j = 1,\ldots n},
\end{equation}
then it is \emph{strongly parabolic} or \emph{evolutionary parabolic}.

In Section \ref{sec:parabolic systems}, I define \emph{parabolic systems}, which are the class of exterior differential systems  that model weakly parabolic equations. These look pointwise like the exterior differential system modelling the heat equation
\[ \pDeriv{u}{x^0} = \sum_{i=1}^n \pDeriv{^2u}{x^i \partial x^i} , \]
but are typically more `curved.'

The generic weakly parabolic equation is neither linear, nor evolutionary, for any choice of coordinates, even locally, and the definition of a parabolic system reflects this generality. This generality is a geometrically natural place to start, and then the classical cases are picked out in a geometric manner, depending on the values local invariants introduced here.

Examples of parabolic systems include those arising from evolutionary parabolic equations, both linear and non-linear. Another class of examples is given by scalar parabolic geometric flows, such as the mean curvature flow. The mean curvature flow as a parabolic system is worked out in detail in Example \ref{ex:Mean Curvature Flow}.
Another interesting family of examples is the parabolic Monge-Amp\`ere equations. In fact, the invariants developed here show that if one takes seriously the idea of exploring the geometry of \nth{2}-order PDE, then one is naturally led to Monge-Amp\`ere equations as a geometrically privileged subclass.

In Section \ref{sec:Equivalence setup}, I set up the equivalence problem for parabolic systems. This involves defining the $G$-structure associated to a given parabolic system, which is a  principle $G$-bundle whose sections are the coframings that are adapted to the geometry of the parabolic system. On this bundle, there are tautological forms, the derivatives of which have torsion. This torsion contains obstructions to finding a geometry preserving map to the flat model. But by the usual yoga of equivalence problems, Cartan's technique of the graph upgrades these obstructions to local geometric invariants. As always, it is an interesting problem to pick out the geometrically interesting pieces of this torsion, and to understand what they say of the geometry.

In Section \ref{sec:MA invariants}, I begin tackling the equivalence problem. The first invariants one finds are the \emph{Monge-Amp\`ere} invariants.
These have been introduced for parabolic equations in 2 and 3 variables by Bryant \& Griffiths (\cite{Characteristic_Cohomology_II}) and Clelland (\cite{Clelland_Thesis}) respectively.
In more than 3 dimensions, the representation theory begins to pay off. Indeed, the Monge-Amp\`ere invariants take values in certain $\SO(n)$-representations, which decompose into irreducible components. The different irreducible components have different geometric interpretations. Broadly, the Monge-Amp\`ere invariants can be organized into 3 families, the \emph{primary}, \emph{secondary}, and \emph{tertiary} Monge-Amp\`ere invariants.

In Section \ref{sec:Monge-Ampere Systems}, I recall the class of exterior differential systems that model Monge-Amp\`ere equations and develop their geometry enough to prove Theorem \ref{thm:deprolongation iff invariants vanish iff deprolongation form}, which characterizes a given parabolic equation as Monge-Amp\`ere  if and only if certain irreducible components of its Monge-Amp\`ere invariants vanish. This gives an effective test, which is straightforward in practice.
% In this context, a second order differential equation for one function of $n+1$ variables is \emph{Monge-Amp\`ere} if it is quasi-linear in the minor-determinants of the Hessian, which is to say that it can be written in the form
% \begin{equation}%\label{eq:MA differential equation}
%     F\left(x^a,u,\pDeriv{u}{x^a},\pDeriv{^2 u}{x^a\partial x^b}\right) = \sum_{|I|=|J|} A_{I,J}\left(x^a,u,\pDeriv{u}{x^a}\right) H_{I,J} = 0 ,
% \end{equation}
% where the $I,J$ range over subsets of $\{0,\ldots,n\}$ and $H_{I,J}$ stands for the row $I$, column $J$ minor determinant of the hessian matrix
% \[ H = \left(\pDeriv{^2 u}{x^a\partial x^b}\right) . \]

Many of the well known examples of Monge-Amp\`ere equations have varying symbol type, but are treated as elliptic equations. This is typically achieved by adding a convexity condition to restrict attention to solutions at which the equation has elliptic linearization. However, the geometric considerations here lead to special class of Monge-Amp\`ere equations whose symbol type is defined independently of solutions.
These are the \emph{linear-type Monge-Amp\`ere systems}, which are exactly the Monge-Amp\`ere systems whose symbol depends on at most $1$-jets of solutions (as opposed to on $2$-jets for generic Monge-Amp\`eres). These correspond to a geometrically natural subclass of Monge-Amp\`ere equations, and have a correspondingly geometric characterization: all of their Monge-Amp\`ere invariants vanish. The mean curvature flow gives a good example of a linear type parabolic Monge-Amp\`ere equation.

Linear type Monge-Amp\`ere equations should be of interest even in the theory of elliptic and hyperbolic equations. They contain a natural class of non-linear \nth{2}-order equations which nonetheless have symbol defined independently of solutions.
Furthermore, while the linearity of a PDE is not invariant under changes of variables, any linearizable Monge-Amp\`ere equation is manifestly of linear type. Consequently, linear type Monge-Amp\`ere equations may be considered as a relatively small invariant family of equations containing the linearizable \nth{2}-order equations.

In Section \ref{sec:Goursat invts}, I introduce another family of invariants that arise naturally in parabolic equations, the \emph{extended Goursat} invariants. These are so named because they contain the invariant introduced by Goursat to measure the sub-principal symbol of parabolic equations. The rest of the extended Goursat invariants are new, as they can only be non-zero for parabolic equations in 3 or more variables. Geometrically, they provide an obstruction to the possibility of finding coordinates putting a given parabolic equation in evolutionary form. Indeed, it is shown in Theorem \ref{thm:integrable characteristics} that the primary Goursat invariants of a parabolic equation vanish if and only if it can be put in evolutionary form.

% The proof shows more. Any parabolic system has a globally defined characteristic distribution, which is generically not Frobenius. On the other hand, for an evolutionary parabolic equation, the characteristic distribution is given as the tangent planes to the foliation by `constant time' slices. The extended Goursat invariant is most naturally thought of as the obstruction to the Frobenius integrability of this characteristic distribution.

% Given an arbitrary parabolic equation, if its extended Goursat invariants vanish, then the characteristic distribution is Frobenius. Consequently, in a neighborhood of any point, the characteristic distribution is defined as the level set of a function $x_0$. The main part of the proof of Theorem \ref{thm:integrable characteristics} then shows that this $x_0$ extends to a choice of coordinates adapted to the parabolic equation and for which $x_0$ is the `time' direction, so that the equation has been put into evolutionary form.

\subsection{Conservation Laws}

The results of this paper are used in a follow up paper, \cite{McMillan:ParabolicsII}, where I prove several general results on the conservation laws of parabolic scalar equations.

For general scalar parabolic equations, the tools of \cite{Characteristic_Cohomology_II} allow me to compute the linear approximation of the characteristic cohomology. This provides a first approximation to the space of conservation laws and also helps define the auxilliary differential equation whose solutions are in bijection with conservation laws.

It is not easy to solve the auxilliary equation for a generic parabolic equation, but the situation is much better for evolutionary parabolic equations.
% The vanishing of the extended Goursat invariants provides more precise control on the structure equations of the corresponding exterior differential systems, and these refined structure equations allow one to begin proving things.
 The proof of the following theorem relies on the fact that the extended Goursat invariants vanish for an evolutionary parablic system.
\begin{theorem}\label{thm: conservation laws 2nd order}
    Any conservation law for any evolutionary parabolic equation depends on at most second derivatives of solutions.
\end{theorem}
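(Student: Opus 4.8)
The plan is to work on the infinite prolongation $\M{\infty}$ of the parabolic system and to represent a conservation law by an $n$-form $\eta$ that is horizontal with respect to the contact structure and whose coefficients are functions on a finite prolongation $\M{k}$; the goal is to show that one may always arrange $k \le 2$. First I would invoke the analysis of the linearized characteristic cohomology from \cite{Characteristic_Cohomology_II}: modulo trivial conservation laws, the computation of $\barh{n}$ reduces to solving the auxiliary linear equation that appears as the differential on the relevant page of the characteristic cohomology spectral sequence, which for a second-order equation is essentially the formal adjoint of the linearization of $F$. Since $F$ is weakly parabolic this auxiliary equation is again parabolic, and the jet-order of a conservation law is controlled by the jet-order of the corresponding solution of the auxiliary equation, so it suffices to bound the latter.

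Second, I would bring in Theorem \ref{thm:integrable characteristics}: because the equation is evolutionary, its primary extended Goursat invariants vanish, so the characteristic line field --- the kernel of the positive semi-definite symbol --- is integrable, and there are local coordinates $(x^0, x^i, u, \ldots)$ in which the equation takes the form $u_{x^0} = F(x^0, x^i, u, u_{x^i}, u_{x^i x^j})$. In these coordinates the unique characteristic covector is $dx^0$, and each prolongation of the equation expresses an $x^0$-derivative of a quantity in terms of that quantity's $x^i$-derivatives, raised by only one order. This is the structural fact that drives a reduction-of-order argument: it trades ``vertical'' high-order information for ``horizontal'' ($x^i$-direction) information in a controlled way, and it is exactly the feature that is unavailable for a parabolic equation not in evolutionary form.

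Third, I would carry out the reduction of order. Writing the conservation-law condition $\bardh\eta \equiv 0$ modulo $\I$ and decomposing it by jet-order, the top-order component asserts that the order-$k$ part of $\eta$ is annihilated by the iterated symbol of $F$ and is therefore a total $x^i$-divergence up to terms that the prolonged equation absorbs. Using the evolutionary normal form --- in particular the surjectivity of the iterated symbol onto the relevant $\SO(n)$-isotypic summands, which is where the representation theory of Section \ref{sec:MA invariants} re-enters --- one shows that the genuinely $k$-th order part of $\eta$ is $\bardh$-exact modulo a current of order $k-1$, so the cohomology class is represented in lower order. Iterating down, every conservation law has a representative of order at most $2$.

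The hard part is the third step, and specifically making the induction terminate precisely at order $2$ rather than stalling at some higher order. This requires a sharp description of the image and kernel of the iterated symbol of an evolutionary parabolic equation, so that the step ``absorb using the prolonged equation'' genuinely lowers the order, together with enough control on the lower-order error currents generated at each stage that the nonlinearity of $F$ does not feed high-order dependence back in. It is here that the vanishing of the extended Goursat invariants is indispensable: without an integrable characteristic direction the order-lowering step has no analogue, which is also why the hypothesis that the equation be evolutionary cannot be dropped.
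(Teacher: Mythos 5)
First, be aware that this paper does not actually contain a proof of this theorem: it is stated in the introduction and explicitly deferred to the follow-up paper \cite{McMillan:ParabolicsII}, with this paper only supplying the geometric input (in particular Theorem \ref{thm:integrable characteristics} and the vanishing of the extended Goursat invariants for evolutionary systems). Your outline does match the strategy the author announces for that follow-up: use the characteristic cohomology machinery of \cite{Characteristic_Cohomology_II} to reduce the classification of conservation laws to an auxiliary linear equation on the (prolonged) parabolic system, and then exploit the evolutionary/Goursat structure to control solutions of that auxiliary equation. To that extent your first two steps are on target, although invoking Theorem \ref{thm:integrable characteristics} is logically superfluous here --- you are handed an evolutionary equation, so you already have the normal form; what the geometry actually buys is the refined coframe adaptation ($\pisu{ii}=\thetasu{0}$, $\d\omegu^0\equiv 0$ modulo $\omegu^0$, semi-basic $\sigma$'s and $\gamma^0_{ij}$) that simplifies the structure equations in which the auxiliary equation must be analyzed.

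The genuine gap is your third step, which you yourself flag as ``the hard part'': it is not a step of a proof but a restatement of the theorem. The entire content of the result is that the order-reduction terminates at second order, i.e.\ that every solution of the auxiliary equation (equivalently, every generating function of a conservation law) depends on at most $2$-jets, and this is precisely what is asserted, not established, in your sketch. The claims doing the work there --- that the top-order part of the conservation-law condition forces the order-$k$ piece to be $\bardh$-exact modulo lower order, and that the ``iterated symbol'' is surjective onto the relevant $\SO(n)$-isotypic summands --- are unsubstantiated, and in the two- and three-variable cases (Bryant--Griffiths, Clelland) the analogous statements are obtained only by a detailed weight-by-weight analysis of the structure equations on the adapted coframe bundle, not by a soft symbol-surjectivity argument; there is no a priori reason the induction could not stall, which is exactly why the general-$n$ case was open. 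In addition, your assertion that the auxiliary equation is ``again parabolic'' and that jet-order of conservation laws is controlled by jet-order of its solutions is too quick: the correspondence is with solutions of an adjoint-type linear system on the infinite prolongation, and bounding their order is the theorem itself. So, as written, the proposal is a plausible plan consistent with the paper's announced approach, but it does not constitute a proof.
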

Analogues of this Theorem were proved in $2$ variables by Bryant \& Griffiths \cite{Characteristic_Cohomology_II} and in $3$ variables by Clelland \cite{Clelland_Thesis}.
The Theorem shows that there can be no KdV type phenomenon for evolutionary parabolic equations---there are no hierarchies of conservation laws depending on increasing numbers of derivatives of solutions. An interesting phenomenon in its own right, this Theorem means that the problem of classifying conservation laws for evolutionary parabolic equations is far more tractable than for general PDE.

From Theorem \ref{thm: conservation laws 2nd order}, it is a quick corollary that the existence of even a single non-trivial conservation law puts strong constraint on the geometry:
\begin{corollary}\label{thm: conservation law implies Monge Ampere}
    If an evolutionary parabolic equation has a non-trivial conservation law, then in any neighborhood where the conservation law is not zero, the equation is of Monge-Amp\`ere type.
\end{corollary}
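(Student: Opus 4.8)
The plan is to obtain this as an essentially immediate consequence of Theorem~\ref{thm: conservation laws 2nd order} together with the Monge-Amp\`ere characterization of Theorem~\ref{thm:deprolongation iff invariants vanish iff deprolongation form}. Let $(M,\I)$ be the parabolic system associated to the given evolutionary parabolic equation, and let $\phi$ be a non-trivial conservation law. The whole point of invoking Theorem~\ref{thm: conservation laws 2nd order} is that $\phi$ may then be represented by an $(n-1)$-form on $M$ itself, with no prolongation required, so that the structure equations of the $G$-structure from Section~\ref{sec:Equivalence setup} and the Monge-Amp\`ere invariants of Section~\ref{sec:MA invariants} apply directly to a representative of $\phi$.

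First I would pull $\phi$ back to the $G$-structure $\gBun \to M$ and put it in normal form modulo $\I$, writing it, up to an element of $\I$, as a combination of wedge products of the tautological semibasic forms. Its leading term --- the coefficient of the top semibasic wedge transverse to the distinguished characteristic direction --- is a function on $\gBun$ transforming in a definite $\SO(n)$-representation, and I would let $U \subseteq M$ be the open set on which this leading term is nonzero. Because $\phi$ is non-trivial, $U$ is a model for the locus where ``the conservation law is not zero'' in the statement, so the assertion to be proved is that $(M,\I)|_U$ is a Monge-Amp\`ere system.

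Next I would impose the defining condition $\d\phi \equiv 0 \pmod{\I}$ and expand it using the structure equations, producing differential and algebraic relations among the coefficients of $\phi$. Prolonging and bookkeeping these relations (as in the characteristic cohomology computations underlying Theorem~\ref{thm: conservation laws 2nd order}), the ``free'' coefficients get exhausted and one is left with an overdetermined system whose integrability obstruction couples the leading term of $\phi$ to the torsion of the $G$-structure, and in particular to the Monge-Amp\`ere invariants. The crux is that this obstruction must vanish at every point where the leading term is nonzero, and that, by the representation-theoretic decomposition of Section~\ref{sec:MA invariants}, the components forced to vanish are exactly the irreducible pieces whose vanishing characterizes the Monge-Amp\`ere condition in Theorem~\ref{thm:deprolongation iff invariants vanish iff deprolongation form}. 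Granting this, Theorem~\ref{thm:deprolongation iff invariants vanish iff deprolongation form} gives at once that the restriction of the system to $U$ is Monge-Amp\`ere, which is the claim.

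The main obstacle is precisely this torsion bookkeeping: it requires the explicit form of the ``auxiliary equation'' governing conservation laws on $M$ --- that is, the expression for $\d\phi \bmod \I$ in an adapted coframe --- and a careful match of its zeroth-order coefficients against the irreducible components of the Monge-Amp\`ere invariants. Some care is also needed to ensure that a \emph{non-trivial} conservation law really does force the leading term to be nonzero on a nonempty open set, rather than being absorbable into the $\I$-ambiguity or into an exact term; this is where one uses that an evolutionary parabolic system has the normal form analysed in the earlier sections --- in particular that its extended Goursat invariants vanish --- to pin down the structure of $\phi$.
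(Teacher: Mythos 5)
Your framing---use Theorem \ref{thm: conservation laws 2nd order} to represent the conservation law by a differential form on $M$ itself and then conclude via Theorem \ref{thm:deprolongation iff invariants vanish iff deprolongation form}---is the right one, but as written the proposal leaves the crux unproved. You route the argument through condition (3) of Theorem \ref{thm:deprolongation iff invariants vanish iff deprolongation form}: you assert that expanding $\d\phi\equiv 0 \pmod{\I}$ and ``bookkeeping'' the resulting overdetermined system forces exactly the irreducible components $V_i^{0j0}$, $V_i^{0jk}$ and the $\Sym_0^4$-part of $V_i^{jkl}$ to vanish wherever the leading term of $\phi$ is nonzero. That coefficient match is precisely the content of the corollary, and you explicitly defer it (``Granting this\ldots'', ``the main obstacle is precisely this torsion bookkeeping''). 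So the proposal is a plan rather than a proof. There is also a small degree slip: for $n+1$ independent variables, solutions are $(n+1)$-dimensional and a conservation law is represented by an $n$-form on $M$, not an $(n-1)$-form.

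The reason this is a \emph{quick} corollary is that one can bypass the torsion bookkeeping entirely by aiming at condition (2) rather than condition (3). By Theorem \ref{thm: conservation laws 2nd order} the law is represented by an $n$-form $\phi$ on $M$ with $\d\phi\in\I$, and $\Upsilon:=\d\phi$ is then a \emph{closed} $(n+1)$-form lying in $\I$. Writing $\Upsilon\equiv A^{ab}\,\thetasu{a}\w\omegasu{b} \pmod{\thetanotu,\Lambda^2\I}$ and using $\d\Upsilon=0$ together with the parabolic symbol relation $\pisu{ii}\equiv 0$, the term $A^{ab}\pisu{ab}\w\omegasu{\varnothing}$ must cancel, which forces $A^{00}=A^{0i}=0$ and $A^{ij}=\lambda\delta^{ij}$; that is, $\Upsilon\equiv\lambda\,\thetasu{i}\w\omegasu{i}$ modulo $\thetanotu,\Lambda^2\I$. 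On the neighborhood where the conservation law is nonvanishing (equivalently, where $\lambda\neq 0$), rescale: $\d(\lambda^{-1}\Upsilon)=-\lambda^{-1}\d\lambda\w(\lambda^{-1}\Upsilon)\equiv 0 \pmod{\Upsilon}$, so $\lambda^{-1}\Upsilon$ satisfies condition (2) of Theorem \ref{thm:deprolongation iff invariants vanish iff deprolongation form} verbatim, and (2)$\Rightarrow$(1) gives Monge--Amp\`ere type there---no identification of which torsion components your auxiliary system kills is needed. If you prefer to keep your route through condition (3), you must actually carry out the postponed computation showing the obstructions of your overdetermined system coincide with those invariants; it cannot simply be granted.
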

This theorem was shown in 2 and 3 variables by Bryant \& Griffiths and Clelland (respectively), but now the statement is shown in all dimensions.

\section{Parabolic Systems}\label{sec:parabolic systems}

\subsection{Definition  and Examples}
The main object studied in this paper are parabolic systems, which are the exterior differential systems associated to \nth{2}-order scalar parabolic differential equations.
\begin{definition}\label{def:weakly parabolic system}
  A \emph{(weakly) parabolic system} in $n+1$ variables is a \[2n+2+(n+1)(n+2)/2\] dimensional\footnote{This is 1 less than the dimension of $J^2(\R^{n+1},\R)$, corresponding to the fact that a parabolic equation is defined by a single differential relation on $J^2(\R^{n+1},\R)$.} exterior differential system $(M,\I)$ such that any point has a neighborhood equipped with a spanning set of 1-forms
  \begin{equation}\label{eq:0-adapted coframing}
    \makebox[\widthof{$\qquad\qquad a,b = 0,\ldots n$}]{} \thetasu{\varnothing},\thetasu{a},\omegu^a,\pisu{ab} = \pisu{ba} \qquad\qquad a,b = 0,\ldots n
  \end{equation}
  that satisfy:
  \begin{enumerate}
    \item
    The forms $\thetasu{\varnothing},\thetasu{a}$ generate $\I$ as a differential ideal.
    \item
    The structure equations
    \begin{align*}
      \d\thetasu{\varnothing} & \equiv \sum_{a=0}^n -\thetasu{a}\w\omegu^a \mod{\thetasu{\varnothing}} \\
      \d\thetasu{a} & \equiv \sum_{b=0}^n -\pisu{ab}\w\omegu^b \mod{\thetasu{\varnothing},\thetasu{b}} \qquad a, b = 0, \ldots, n .
    \end{align*}
    \item
    The parabolic symbol relation
    \[ \sum_{i=1}^n \pisu{ii} \equiv 0 \mod{\thetasu{\varnothing},\thetasu{a},\omegu^a} . \]
  \end{enumerate}
  Any collection of 1-forms is called a \emph{parabolic (extended) coframing}.
\end{definition}

\begin{remark}
  These exterior differential systems model scalar, parabolic, \nth{2}-order PDE. The first two conditions exhibit $(M,\I)$ as locally equivalent to a \nth{2}-order differential equation. The third condition is then that the principal symbol of this differential equation is everywhere parabolic.

  The existence (or not) of a parabolic-adapted coframing near a point of $M$ is determined entirely by the structure of the ideal $\I$. If they exist, parabolic coframings are the ones adapted to the geometry of the parabolic system $(M,\I)$.

  Parabolic systems are Pfaffian systems, see for example \cite{BCGGG}, chapter IV. Several of the results there are invaluable in the following.
\end{remark}

\begin{remark}
    It will be useful throughout to make a distinction between space-time indices $a, b, c, \ldots$ that range from $0$ to $n$ and spatial indices $i, j, k, l, \ldots$ that range from $1$ to $n$. This convention will strictly held to.

    Because all of the vector spaces that will be associated to a parabolic system have a well defined \emph{spatial trace}, it will be convenient to apply the \emph{spatial Einstein summation} convention, so that, for example,
    \[ \pisu{ii} := \sum_{i=1}^n \pisu{ii} . \]
    The distinction between trace and spatial trace will be made by using (respectively) repeated space-time indices and repeated spatial indices.
\end{remark}

\begin{example}[Jet formulation of a parabolic equation]\label{ex:jet formulation of parabolic}
  Consider the 2-jet bundle $\jets = J^2(\R^{n+1},\R)$ over $\R^{n+1}$, with jet-coordinates $x^a$, $u$, $p_a$, and $p_{ab}=p_{ba}$, where the $p_a$ correspond to the first derivatives of $u$ with respect to $x^a$ and $p_{ab}$ to the second derivatives. These coordinates may be used to define the \emph{contact forms}\footnote{These are not contact forms in the sense of contact geometry, in which a contact form defines a totally non-integrable hyperplane distribution. However, the concepts are related. In particular, the form $\hat\theta_\varnothing$ can be defined on the space of 1-jets, where it \emph{does} define a maximally non-integrable distribution.}
  \begin{align*}
    \hat\theta_\varnothing & = \d u - p_a\d x^a \\
    \hat\theta_a & = \d p_a - p_{ab}\d x^b  ,
  \end{align*}
  as well as the canonical \emph{contact ideal}
  \[ \contact = \{ \hat\theta_\varnothing , \hat\theta_a \}_{EDS} =\{ \hat\theta_\varnothing , \hat\theta_a, \d\hat\theta_\varnothing, \d\hat\theta_a \}_{alg}  \]
  on $\jets$.
  The pair $(\jets,\contact)$ is an exterior differential system\footnote{The geometric structure of this EDS is independent of the choice of coordinates on $\jets$ made above. In fact, $\contact$ can be defined intrinsically: Let $C_x \subset T_x(\jets)$ be the subspace spanned by the tangent planes of all $2$-jet graphs which pass through $x \in \jets$. This defines the $n+n(n+1)/2$ dimmensional \emph{contact distribution} $C$ on $\jets$, and $\contact$ is the differential ideal generated by $C^\perp \subset \Omega^1(M)$.}.

  An $(n+1)$-dimensional submanifold $\Sigma$ of $\jets$ is locally the  2-jet graph of a function $u$ if and only if
  \begin{enumerate}
    \item the `independence condition' $\d x^0 \w\ldots\w \d x^n$ is nonzero when pulled back to $\Sigma$ and
    \item $\Sigma$ is an integral manifold of $(\jets,\contact)$
  \end{enumerate}
  This fact allows one to represent any second order differential equation as an exterior differential system $(M, \I)$. A given non-degenerate \nth{2} order PDE
  \begin{equation}\label{eq:2nd order PDE F}
    F\left(x^a,u,\pDeriv{u}{x^a},\pDeriv{^2 u}{x^a\partial x^b}\right) = 0 \qquad{a,b = 0,\ldots n}
  \end{equation}
  defines a function on $\jets$, and $M$ is the zero locus
  \[ M = F^{-1}(0) . \]
  Then $\I$ is the pullback of $\contact$ to $M$. By construction, integral manifolds of $(M,\I)$ for which
  \[ \d x^0 \w\ldots\w \d x^n|_{\Sigma} \neq 0 \]
  are in local bijection with the 2-jet graphs of solutions to $F$.

  If $F$ is a parabolic equation, then the EDS $(M,\I)$ is a parabolic system. This can be seen by first noting that the spanning set of $1$-forms\footnote{Omitting pullbacks from the notation, which will be done without further comment.}
  \begin{equation}\label{eq:flat coframe}
    \hat\theta_{\varnothing}, \quad \hat\theta_a, \quad \hat\omega^a := \d x^a, \quad \hat\pi_{ab} := \d p_{ab}
  \end{equation}
  of $M$ is partially adapted to $\I$, in that
  \[ \I = \{ \hat\theta_{\varnothing}, \hat\theta_a \} , \]
  and conditions 1 and 2 of Definition \ref{def:weakly parabolic system} hold.
  Now, for any $\GL(\R^{n+1})$-valued function $(B^a_b)$ on $M$, the coframing
  \begin{align*}
    \thetasu{\varnothing} & = \hat\theta_{\varnothing} &
    \thetasu{a} & = B^b_a\hat\theta_b \\
    \omegu^a & = (B^{-1})^a_b \hat\omega^b  &
    \pisu{ab} & = B^c_a \hat\pi_{cd} B^d_b
  \end{align*}
  will also satisfy conditions 1 and 2. On the other hand, there is a non-trivial relation
  \[ 0 = \d F \equiv \pDeriv{F}{p_{ab}}\hat{\pi}_{ab} \mod{\thetasu{\varnothing}, \thetasu{a}, \omegu^a} , \]
  on $M$. By definition, $F$ is parabolic when the `symbol' matrix $\left(\pDeriv{F}{p_{ab}}\right)$ is positive semi-definite. In this case, an appropriate choice for $(B^a_b)$ will diagonalize the symbol as in condition 3 of Definition \ref{def:weakly parabolic system}, so that
  \[ \pisu{ii} \equiv 0 \mod{\thetasu{\varnothing},\thetasu{a},\omegu^a} .  \]

  Any small enough neighborhood of a parabolic system is EDS equivalent to one equipped with such an embedding into $\jets$. See \cite{BCGGG} Theorem 5.10 for example, or the proof of Theorem \ref{thm:integrable characteristics} for details. However, there are parabolic systems which don't have a global embedding into $\jets$, such as the Mean Curvature Flow, taken up in Example \ref{ex:Mean Curvature Flow}.
\end{example}

\begin{example}[Heat equation]\label{ex:heat eq}
  Consider the heat equation
  \[ \pDeriv{u}{x^0} = \sum_{i=1}^n \pDeriv{^2 u}{x^i\partial x^i} \]
  and the corresponding exterior differential system $M$, given by the submanifold $\{ p_0 = p_{ii} \}$ of $J^2(\R^{n+1},\R)$.
  The coframing given in Equation \ref{eq:flat coframe} manifestly
  restricts to a parabolic coframing of $M$.

  Note that the degenerate form of this equation,
  \begin{equation*}%\label{eq:elliptic free parameter}
    0 = \sum_{i=1}^n \pDeriv{^2 u}{x^i\partial x^i},
  \end{equation*}
  has parabolic principal symbol, so also defines a parabolic coframing. However, the geometry detects the vanishing sub-principal symbol of this equation, as explained in Section \ref{sec:Goursat invts}.
\end{example}

\begin{example}[Mean Curvature Flow]\label{ex:Mean Curvature Flow}
  Fix an $n$-dimensional manifold $N$. A family
  \[ u \colon N \times \R \to \E^{n+1} \]
  of immersions of $N$ into Euclidean space, paramaterized by $t \in \R$, satisfies \emph{mean curvature flow} if for each $t$ and each point $x\in N$,
  \[ \pDeriv{u}{t}(x,t) \cdot \hat n = H, \]
  where $H$ is the mean curvature of $N \times \{t\}$ at $u(x,t)$ and $\hat n$ the unit normal. I now describe an exterior differential system whose integral manifolds are equivalent to solutions of the mean curvature flow. This system is equivalant to a parabolic system after certain symmetries (technically: Cauchy characteristic directions) are taken into account.

  Let
  \[ M = \R\times\coframeBundle(\E^{n+1})\times \Sym^2(\R^{n+1}) , \]
  where $\coframeBundle(\E^{n+1})$ is the bundle of orthonormal coframes of $\E^{n+1}$ and $\Sym^2(\R^{n+1})$ is the symmetric square. Denote by $t$ the projection of $M$ onto the first factor and $h = (h_{ab})$ the projection onto the last factor. Further, let
  \[ H = \sum_{i=1}^n h_{ii} \]
  denote the spatial trace of $h$. Since $\E^{n+1}$ is a space form, $\coframeBundle(\E^{n+1})$ may be identified with the Lie group $\R^{n+1} \rtimes SO(n+1)$ of Euclidean motions. The components of the Maurer-Cartan form on $SO(n+1)$ extend the tautological form $\eta = (\eta^a)$ of $\coframeBundle(\E^{n+1})$ to a unique $\R^{n+1} \rtimes \mathfrak{so}(n+1)$-valued coframing $\eta^a, \eta^{ab} = - \eta^{ba}$ so that
  \[ \d\eta^a = -\eta^{ab}\w\eta^b \qquad \d\eta^{ab} = -\eta^{ac}\w\eta^{cb} . \]

  These can be used to define the following 1-forms on $M$:
  \begin{equation}\label{eq:MCF coframe}
    \begin{aligned}
      & \thetasu{\varnothing} = \eta^0 - H dt & & \\
      & \thetasu{0} = \d H - h_{0i}\eta^i - h_{00}\d t & & \thetasu{i} = \eta^{0i} - h_{ij}\eta^j - h_{0i}\d t \\
      & \omegu^0 = \d t & & \omegu^i = \eta^i \\
      & \pisu{ij} = \d h_{ij} - h_{ik}\eta^{kj} - h_{jk}\eta^{ki} \\
      & \pisu{0i} = \d h_{0i} - h_{0j}\eta^{ji} - Hh_{ij}h_{jk}\eta^k \\
      & \pisu{00} = \d h_{00} - H h_{0i} h_{ij}\eta^j
    \end{aligned}
  \end{equation}
Let $\I$ be the ideal generated by $\thetasu{\varnothing}, \thetasu{a}$.

  Consider a solution manifold $\Sigma$ of the exterior differential system $(M,\I)$ for which the form $\omegu^0\w\ldots\w\omegu^n$ does not vanish. In particular, $\omegu^0$ is not zero on $\Sigma$, so each point is contained in a small neighborhood foliated by diffeomorphic level surfaces of $t$. Without loss of generality then, suppose that
  \[ \Sigma = (-1,1) \times N \subset M , \]
  and let
  \[ \Sigma_t :=  \{t\}\times N \cong N \]
  for $t \in (-1,1)$.

  Both $\thetasu{\varnothing}$ and $\d t$ vanish on a given $\Sigma_t$, so the form $\eta^0$ pulls back to zero on $\Sigma_t$ as well. Therefore, for each $x \in \Sigma_t$, the tautological coframing $\eta^0,\ldots,\eta^n$ of $\E^{n+1}$ is adapted to $\Sigma_t$, in that
  \[ \left. \eta^0\right|_{T_x\Sigma_t} = 0 \quad \mbox{ and } \quad \eta^0(\hat{n}) = 1 . \]

  Since
  \[ 0 = \d\eta^0 = -\eta^{0i}\w\eta^i \]
  on $\Sigma_t$, Cartan's lemma implies that there is a $\Sym^2(\R^{n})$-valued function $(\tilde h_{ij})$ so that
  \[ \eta^{0i} = \tilde h_{ij}\eta^j . \]
  The function $(\tilde h_{ij})$ is the second fundamental form of $\Sigma_t$ in the orthonormal coframing $\eta^i$. On the other hand, the $\thetasu{i}$ vanish on $\Sigma_t$, so the value of $h_{ij}$ at each point of $\Sigma_t$ agrees with $\tilde h_{ij}$. In particular, $H$ restricts to the mean curvature function on $\Sigma_t$. Thus, for any parameterization $u$ of $\Sigma$,
  \[ \pDeriv{u}{t}(x) \cdot \hat n = \eta^0\left(\pDeriv{u}{t}(x)\right) = H dt\left(\pDeriv{u}{t}(x)\right) = H , \]
  and the manifolds $\Sigma_t$ satisfy mean curvature flow.

  Conversely, given a solution $N_t$ to the Mean Curvature Flow, and a fixed orthonormal coframing of $N$, the graph of $N_t$ in $\E^{n+1}$ has a unique lift to an integral manifold of $(M, \I)$.

  The Coframing \eqref{eq:MCF coframe} has been  chosen so that
  \begin{align*}
    \d \thetasu{\varnothing} & \equiv -\thetasu{a}\w\omegu^a \mod{\thetasu{\varnothing}} \\
    \d \thetasu{a} & \equiv -\pisu{ab}\w\omegu^a \mod{\thetasu{\varnothing},\thetasu{a}} \\
    \pisu{ii} & \equiv 0 \mod{\thetanotu, \thetasu{a}, \omegu^a} .
  \end{align*}
  These structure equations show that the Cartan system of $\I$ is given by
  \[ \mathcal{K} = \{ \thetasu{\varnothing}, \thetasu{a}, \omegu^a, \pisu{ab} \} . \]
  It follows from the theory of Cauchy characteristics (see, for example, \cite{BCGGG} Chapter 2.2) that:
  \begin{enumerate}
    \item The ideal $\mathcal{K}$ is Frobenius.

    \item In any small neighborhood of $M$ for which the leaf space is a manifold $M'$, there is an ideal $\I'$ on $M'$ that pulls back to $\I$. In other words, the quotient map $q\colon (M,\I) \to (M',\I')$ is an EDS morphism (so pushes solutions down to solutions).

    \item For an integral manifold $\Sigma \in M'$, the inverse image $q^{-1}(\Sigma)$ is an integral manifold of $M$. This defines a bijection between maximal integral manifolds in $M$ and $M'$.
  \end{enumerate}

  The kernel of $\mathcal{K}$ at any point is the space of Cauchy characteristic directions, and any maximal integral manifold will contain them in its tangent sub-bundle. In this case, these Cauchy directions correspond to the freedom to choose an arbitrary orthonormal coframing of each $\Sigma_t$. The map $q$ quotients down these irrelevant directions.

  Finally, it is clear from the Coframing \eqref{eq:MCF coframe} that $M'$ is a parabolic system.
\end{example}

\subsection{Independence Conditions for Parabolic Systems}

As demonstrated by the examples, it is often useful to restrict attention to integral manifolds satisfying a given transversality condition. This is clearest for integral manifolds described in example \ref{ex:jet formulation of parabolic}, where integral manifolds for which $\d x^0 \w \ldots \w \d x^n \neq 0$ are in bijection with \emph{classical solutions} of the given PDE. There are integral manifolds that don't satisfy this condition, corresponding to generalized solutions. These are also of interest, but it is clearly important to distinguish the two classes.

The condition that the form $\d x^1 \w \ldots \w \d x^n$ restrict to be non-zero is appropriately generalized in the following definition. For clarity, consider an EDS $(M,\I)$ that satisfies the constant rank condition that the degree 1 grade of $\I$,
\[ \I^1 = \I \cap \Omega^1(M) , \]
has constant rank $s$. This assumption typically holds in examples of interest, including parabolic systems.
\begin{definition}
  An \emph{independence condition} for $(M,\I)$ is a locally free $\cinfinity{M}$-submodule $J$ in $\Omega^1(M)$ of rank $n+s$ so that \textit{(i)}\enskip $\I^1 \subset J$ and \textit{(ii)}\enskip $J$ has everwhere a local basis
  \[ \thetasu{1}, \ldots, \thetasu{s}, \omegu^1, \ldots, \omegu^n \]
  for which
  \[ \I^1 = \{\thetasu{1}, \ldots, \thetasu{s}\} \quad \mbox{ and } \quad \omegu^1\w\ldots\w\omegu^n \not\in \I . \]

  An $n$-dimensional integral manifold $\Sigma$ of $M$ is a \emph{solution manifold} if $J|_\Sigma$ has rank $n$, or equivalently, if
  \[ \omegu^1\w\ldots\w\omegu^n|_\Sigma \neq 0 \]
  everywhere.
\end{definition}

Parabolic systems have a natural choice of independence condition, given by the Cauchy system of $\thetasu{\varnothing}$. It follows immediately from the structure equations that the Cauchy system of $\thetasu{\varnothing}$ is given by the ideal
\[ \cartTheta =  \{\thetasu{\varnothing},\thetasu{a},\omegu^a\}_{alg} . \]
By the general theory of Cauchy systems, the ideal $\cartTheta$ is Frobenius.
Furthermore, as $\thetanotu$ is defined up to scaling,
the ideal $\cartTheta$ is invariantly defined, independent of a choice of parabolic coframing. It plays an important role in the following.

\section{The Equivalence Problem for Parabolic Systems}\label{sec:Equivalence setup}

\subsection{\texorpdfstring{$G_0$}{G0}-structures}\label{sec:G0 structures}
When defining examples of parabolic systems, it is useful to retain the flexibility of Condition 3 in Definition \ref{def:weakly parabolic system}.
However, the space of parabolic coframes does not immediately define a $G$-structure, which will be necessary to approach the equivalence problem. So, once a parabolic system is in hand, it will be useful to strengthen Condition 3 as in the following Lemma.
\begin{lemma}\label{thm: (lem) normalized parabolic symbol}
    For any function $\lambda$ on $M$, any parabolic coframing of a parabolic system may be normalized to a new parabolic coframing for which
    \[ \pisu{ii} = \lambda\thetasu{0} .  \]
\end{lemma}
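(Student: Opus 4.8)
The plan is to pin down exactly the freedom one has in choosing the forms $\pisu{ab}$ once the remaining members $\thetasu\varnothing,\thetasu a,\omegu^a$ of a parabolic coframing are fixed, and then to check that this freedom is large enough to arrange $\pisu{ii}=\lambda\thetasu 0$. So, starting from an arbitrary parabolic coframing, Condition 3 of Definition \ref{def:weakly parabolic system} lets us write, on a neighborhood of any point,
\[
  \pisu{ii} = c_0\thetasu\varnothing + c^a\thetasu a + e_a\omegu^a
\]
for suitable functions $c_0, c^a, e_a$, and the goal is to remove every term except one proportional to $\thetasu 0$, and to scale that one to $\lambda$.

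Consider a replacement $\pisu{ab}\rightsquigarrow\pisu{ab}+V_{ab}$ leaving $\thetasu\varnothing,\thetasu a,\omegu^a$ untouched, where $V_{ab}=V_{ba}$ is a combination of $\thetasu\varnothing,\thetasu c,\omegu^c$. Such a change is a unipotent change of coframe, so the new collection is again a spanning set, and Condition 1 is unaffected. Writing $V_{ab}=p_{ab}\thetasu\varnothing+q_{abc}\thetasu c+r_{abc}\omegu^c$ with $p_{ab},q_{abc},r_{abc}$ symmetric in $a,b$, the only obstruction to preserving the second structure equation of Condition 2 is that $\sum_b V_{ab}\w\omegu^b\equiv 0\mod{\thetasu\varnothing,\thetasu c}$ for each $a$; the $\thetasu\varnothing$- and $\thetasu c$-parts of $V_{ab}$ contribute nothing modulo the ideal, so this reduces to $\sum_{b,c} r_{abc}\,\omegu^c\w\omegu^b=0$, i.e.\ $r_{abc}$ must be symmetric in $b$ and $c$ as well, hence totally symmetric. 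Under such a replacement the spatial trace transforms by
\[
  \pisu{ii}\rightsquigarrow\pisu{ii} + \Bigl(\textstyle\sum_i p_{ii}\Bigr)\thetasu\varnothing + \Bigl(\textstyle\sum_i q_{iic}\Bigr)\thetasu c + \Bigl(\textstyle\sum_i r_{iic}\Bigr)\omegu^c .
\]

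It therefore suffices to see that $\sum_i p_{ii}$, $\sum_i q_{iic}$, and $\sum_i r_{iic}$ can be prescribed arbitrarily — the first as any function, the other two as any families indexed by $c=0,\dots,n$. For $p$ and $q$ this is immediate (support them on $a=b=1$). For $r$ one must produce a \emph{totally symmetric} tensor with a prescribed spatial trace, and this is the one genuinely non-vacuous point; a direct computation shows that
\[
  r_{abc} = \tfrac{1}{n+2}\bigl(\delta^{sp}_{ab}e^{sp}_c + \delta^{sp}_{ac}e^{sp}_b + \delta^{sp}_{bc}e^{sp}_a\bigr) + \tfrac{e_0}{n}\bigl(\delta^{sp}_{ab}\delta^{0}_{c} + \delta^{sp}_{ac}\delta^{0}_{b} + \delta^{sp}_{bc}\delta^{0}_{a}\bigr)
\]
is totally symmetric with $\sum_i r_{iic}=e_c$, where $\delta^{sp}$ is the spatial Kronecker symbol, $\delta^{0}_{c}$ is $1$ for $c=0$ and $0$ otherwise, and $e^{sp}_c=e_c$ for $c$ spatial while $e^{sp}_0=0$. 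Choosing $p,q,r$ so that the correction term in the displayed transformation equals $-c_0\thetasu\varnothing - c^i\thetasu i - (c^0-\lambda)\thetasu 0 - e_a\omegu^a$ then yields a parabolic coframing with $\pisu{ii}=c^0\thetasu 0-(c^0-\lambda)\thetasu 0=\lambda\thetasu 0$, and Condition 3 holds a fortiori. The only real obstacle in the argument is the surjectivity of the spatial trace on totally symmetric $3$-tensors, which the explicit formula settles; the remainder is bookkeeping with the structure equations.
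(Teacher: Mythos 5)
Your proof is correct and takes essentially the same route as the paper's: write $\pisu{ii}$ in the coframe, absorb the $\thetasu{\varnothing}$- and $\thetasu{a}$-terms freely, and absorb the $\omegu^a$-terms by a modification $\pisu{ab}\mapsto\pisu{ab}+r_{abc}\omegu^c$ with $r_{abc}$ totally symmetric and of prescribed spatial trace. The only difference is that you make explicit (via your formula for $r_{abc}$) the surjectivity of the spatial trace on totally symmetric $3$-tensors and the derivation of the total-symmetry constraint from the structure equations, points the paper asserts without detail.
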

\begin{proof}
    By assumption,
    \[ \pisu{ii} \equiv 0 \mod{\thetasu{\varnothing},\thetasu{a},\omegu^a} , \]
    so that there are functions $f^{\varnothing}, f^a, f_a$ for which
    \[ \pisu{ii} = f^{\varnothing}\thetasu{\varnothing} + f^a\thetasu{a} + f_a \omegu^a . \]

    Adding any multiples of $\thetasu{\varnothing}$ and $\thetasu{a}$ to $\pisu{ij}$ again gives a parabolic coframing, so without loss of generality
    \[ \pisu{ii} = \lambda\thetasu{0} + f_a \omegu^a . \]

    If $h_{abc}$ is symmetric in its indices, then the coframe modification
    \[ \pisu{ab} \mapsto \pisu{ab} + h_{abc}\omegu^c \]
    is again parabolic. Any choice of $h_{abc}$ with $h_{iia} = -f_a$ will absorb the $\omega^a$ terms, so that $\pisu{ii} = \lambda\thetasu{0}$.
\end{proof}

\begin{definition}
    A parabolic coframing of $(M,\I)$ is \emph{$0$-adapted} if it is normalized so that $\pisu{ii} = 0$.
\end{definition}
% With this normalization, the space of $0$-adapted coframes defines a $G$-structure $\gBun_0$ on $M$ whose local sections are $0$-adapted coframings. In fact, any two $0$-adapted coframes at a point differ by the action of the Lie group $G_0$ defined below.
% This is done in Section \ref{sec:G0 structures}.
% A choice $\I$ of parabolic structure on $M$ is equivalent to the space of $G_0$-structures that satisfy a further torsion condition.
The choice of $\lambda = 0$ is the correct normalization for generic parabolic systems, essentially because the subprincipal symbol is not well defined.
Section \ref{sec:Goursat invts} will deal with the parabolic systems for which it makes sense to ask if the subprincipal symbol vanishes. There it will be better to choose $\lambda$ to equal 1 if the subprincipal symbol is non-vanishing, or 0 if the subprincipal vanishes identically.

% On the other hand, the calculation of the Monge-Amp\`ere invariants of a parabolic system depend in no way on the specific choice of $\lambda$ made.
% I now turn to the description of the $G_0$-structure of a parabolic system. Assume henceforth that all $0$-adapted coframings are normalized as in Lemma \ref{thm: (lem) normalized parabolic symbol}.

Fix a vector space $W = \R\oplus\R^n$ with basis $e_a = e_0, e_1 , \ldots , e_n$ and
\[ \tilde V = \R\oplus W \oplus W^\vee \oplus \Sym^2 W \cong W^\vee \oplus \bigoplus_{r=0}^2 \Sym^r W \]
with induced basis
\begin{equation}\label{eq: basis for V}
    e_{\varnothing}, e_a, e^a, e_{ab} = e_{ba} .
\end{equation}
Let $V$ be the orthogonal complement of the element $e_{ii}$, so that $\tilde V$ decomposes as
\[ \tilde V = V \oplus \R\{e_{ii}\} . \]
It is clear that a 0-adapted coframing on $M$ may be considered as a 1-form on $M$ with values in $\tilde V$.
Furthermore, at each point $x$ of $M$, the image of $T_xM$ is $V$.

One may check that any two 0-adapted coframes at a point of $M$ differ by the action of a matrix $g \in \GL(\tilde V)$ that in the basis \ref{eq: basis for V} is of the form
\begin{equation}\label{eq:G_0 structure matrix}
  \left(\begin{array}{cccc}
  k_\varnothing  & 0  & 0          & 0 \\
  (k_a) & B  & 0          & 0 \\
  (k^a) & \trans{B}^{-1}S & k_\varnothing \ \trans{B}^{-1} & 0 \\
  (k_{ab}) & D  & BT      & C_B/k_\varnothing  \end{array}\right)
\end{equation}
where
% \footnote{With notation $\Sym^r_0 W$ standing for the spatially-traceless symmetric $r$th power of $W$, equivalently the subspace of $\Sym^r W$ annihilated by the element $e^{ii}$ of $\Sym^2 W^\vee$.}
\begin{gather*}
    k_\varnothing \in \R^\times, \quad (k_a) \in W, \quad (k^a) \in W^\vee, \\
    (k_{ab}) \in \Sym^2 W \quad \mbox{ and } \quad k_{ii} = 0 \\
    B = \left( \begin{array}{cc} B_0^0 & B^i_0 \\ 0 & B^i_j \end{array} \right) \in \left( \begin{array}{cc} \R^\times & \trans{(\R^n)} \\ 0 & CO(n) \end{array} \right) \subset \GL(W)
\end{gather*}
with
\[ B^i_j = b \mathring{B}^i_j \quad \mbox{ for } \quad b \in \R^\times, \quad \mathring{B} \in SO(n) \]
and
\begin{gather*}
    % B_0^0 = b^2/k_\varnothing . \\
    S = \left( \begin{array}{cc} S^{00} & S^{0j} \\ S^{0j} & S^{ij} \end{array} \right) \in \Sym^2 W^\vee \subset \Hom(W,W^\vee) , \\
    D = (D^i_{jk}) \in \Hom(W, \Sym^2 W) \quad \mbox{ with } \quad D^a_{ii} = 0 , \\
    T = (T_{ijk}) \in \Sym^3 W \quad \mbox{ with } \quad T_{iia} = 0 ,
\end{gather*}
and finally, the matrix $C_B$ is induced by the conjugate transpose action of $B$ on $\Sym^2 W$, so that
\[ C_{B}(\pisu{ab}) = B_a^c\pisu{cd}B^d_b . \]
% \[ C_B(v \circ w) = Bv \circ Bw \qquad \forall v,w \in W . \]

Let $G_0$ be the subgroup of matrices in $\GL(\tilde V)$ of this form. The canonical representation of $G_0$ on $\tilde V$ contains $V$ as a subresentation, so we may consider $G_0$ as a subgroup of $\GL(V)$.

Consider the $V$-valued coframe bundle $\coframeBundle(M)$ over $M$, whose fiber at each $x \in M$ consists of all of the linear isomorphisms
\[ u \colon T_xM \xrightarrow{\qquad} V . \]
Arbitrary local coframings of $M$ are in bijection with local sections of $\coframeBundle(M)$.
In particular, each 0-adapted coframing of $M$ gives a local section of $\coframeBundle(M)$. The set of coframes in $\coframeBundle(M)$ that come from 0-adapted coframings defines a $G_0$-structure $\gBun_0$ on $M$.

The block triangular structure of $G_0$ corresponds to the following filtration of ideals, which are adapted to the geometry of the parabolic system $(M,\I)$ and are well defined independent of any particular choice of coframing:
\begin{equation}\begin{aligned}\label{eq:coframing filtration}
\{\thetasu{\varnothing}\} & \subset \{\thetasu{\varnothing},\thetasu{i}\} \subset \{\thetasu{\varnothing},\thetasu{a}\} \subset \{\thetasu{\varnothing},\thetasu{a},\omegu^0\} \subset \{\thetasu{\varnothing},\thetasu{a},\omegu^a\} \\
& \subset \{\thetasu{\varnothing},\thetasu{a},\omegu^a,\pisu{ij}\} \subset \{\thetasu{\varnothing},\thetasu{a},\omegu^a,\pisu{aj}\} \subset \{\thetasu{\varnothing},\thetasu{a},\omegu^a,\pisu{ab}\} = \Omega^*(M) .
\end{aligned}
\end{equation}

Now recall the tautological 1-form of a $G$-structure, which can be used to calculate properties of adapted coframings in a uniform way.
\begin{definition}
On the coframe bundle $\pi\colon \coframeBundle(M) \to M$, the \emph{tautological form} $\eta \in \Omega^1(\coframeBundle(M),V)$ is defined by
\[ \eta_u(X) =  (\pi^*u)(X) \]
for all $u \in \coframeBundle(M)$ and $X \in T_u \coframeBundle(M)$.

The tautological form of a $G$-structure $\gBun \subset \coframeBundle(M)$ is the pullback of $\eta$ to $\gBun$.
\end{definition}
The tautological form is uniquely characterized by its \emph{reproducing property}, the property that
\[ s^* \eta = s \]
for any section $s$ of $\coframeBundle(M)$. For this reason, the tautological form may be thought of as a `universal' choice of coframing for $M$.
The ideal of semi-basic forms of $\gBun$, denoted $\sbforms^*$, is generated as a $\mathcal{C}^\infty(\gBun)$-module by the component 1-forms of $\eta$.

It will be useful to employ the vector notation
\[  \thetas{\varnothing},\quad \Theta=\left( \begin{array}{c} \thetas{0} \\ \thetas{i} \end{array} \right),\quad
\Omega = \left( \begin{array}{l} \omega^0 \\ \omega^i \end{array} \right),\quad
\Pi = \left( \begin{array}{cc} \pis{00} & \pis{i0} \\ \pis{i0} & \pis{ij}  \end{array} \right) \]
for the components of the tautological form on $\gBun_0$.
With analogous notation for a $0$-adapted coframing, the structure equations can be written more concisely as
\begin{equation}\begin{aligned} \label{eq:basic structure eqs}
\d\thetasu{\varnothing} & \equiv -\trans{\underline{\Theta}}\w\underline{\Omega} \mod{\thetasu{\varnothing}} \\
\d\underline{\Theta} & \equiv -\underline{\Pi}\w\underline{\Omega} \mod{\thetasu{\varnothing},\thetas{a}} .
\end{aligned}\end{equation}

\emph{Cartan's first structure equation} states that on a given $G_0$-structure $\gBun_0$, there is a pseudo-connection
\[ \varphi \in \Omega^1(\gBun_0, \mathfrak{g}_0) \]
(for $\mathfrak{g}_0$ the Lie algebra of $G_0$) and a torsion map
\[ T \colon \gBun_0 \xrightarrow{\quad} \Hom(\Lambda^2 V, V) \]
so that
\[ \d \eta = - \varphi \w \eta + T(\eta\w\eta) . \]
Roughly, $\varphi$ measures the variation of $\eta$ in the fiber direction and $T$ measures the first order twisting between fibers. In vector notation,
\begin{equation}\label{eq:0 adapted Cartan structure} \d \left(\begin{array}{l}\thetas{\varnothing} \\ \Theta \\ \Omega \\ \Pi \end{array}\right) =
-\left( \begin{array}{cccc}
\kappa_\varnothing  & 0 & 0 & 0 \\
(\kappa_a) & \beta & 0 & 0 \\
(\kappa^a) & \sigma & \kappa_\varnothing \Id_\Omega  - \trans{\beta} & 0 \\
(\kappa_{ab}) & \gamma & \tau & C_\beta - \kappa_\varnothing \Id_\Pi  \end{array}\right)\wedge\
\left(\begin{array}{l}\thetas{\varnothing} \\ \Theta \\ \Omega \\ \Pi \end{array}\right)
+ \left(\begin{array}{l}T_{\thetas{\varnothing}} \\ T_{\Theta} \\ T_{\Omega} \\ T_{\Pi} \end{array}\right) ,
\end{equation}
where the torsion terms $T_{\thetas{\varnothing}},T_\Theta,T_\Omega$ and $T_\Pi$ are semi-basic.
The matrix valued one-forms in Equation \eqref{eq:0 adapted Cartan structure} have components as follows:
\[ \kappa_\varnothing \in \Omega^1(\gBun_0, \R), \quad (\kappa_a) \in \Omega^1(\gBun_0, W), \quad (\kappa^a) \in \Omega^1(\gBun_0, W^\vee), \]
\[ (\kappa_{ab}) \in \Omega^1(\gBun_0, \Sym^2 W) \quad \mbox{ and } \quad \kappa_{ii} = 0 \]

\[ \beta = \left( \begin{array}{cc} \beta_0^0 & \beta_0^i \\  0  & \beta^i_j \end{array} \right) \in
\Omega^1\left(\gBun_0 , \left( \begin{array}{cc} \R & \trans{(\R^n)} \\ 0 & \mathfrak{co}(n) \end{array} \right)\right) \]
where $\beta^i_j$ decomposes into an $\mathfrak{so}(n)$-valued component $\mathring{\beta}$ and conformal trace component $\beta_{\tr}$,
\[ \beta^i_j = \mathring{\beta}^i_j + \delta^i_j \beta_{\tr} . \]
Denote
\[ \sigma = \left( \begin{array}{cc} \sigma^{00} & \sigma^{0j} \\ \sigma^{0j} & \sigma^{ij} \end{array} \right) \in \Omega^1\left( \gBun_0, \Sym^2 W^\vee \right). \]
Denote the components of $\gamma$ by
\[ \gamma = (\gamma^a_{bc}) \in \Omega^1(\gBun_0, \Hom(W, \Sym^2 W)) \quad \mbox{ with } \quad \gamma^a_{ii} = 0 . \]
Finally, the action of $C_\beta$ is given by left and right multiplication by $\beta$:
\[ C_\beta(\pi_{ab}) = \beta_a^c\w\pi_{cb} - \pi_{ac}\w\beta^c_b . \]

The reproducing property of the tautological form immediately determines some of the torsion forms. Because \eqref{eq:basic structure eqs} holds for any $0$-adapted coframing $\etu$,
\[ \etu^*(T_{\thetas{\varnothing}}) \equiv \etu^*\d\thetas{\varnothing} = \d\thetasu{\varnothing} \equiv -\trans{\underline{\Theta}}\w\underline{\Omega}  \mod{\thetasu{\varnothing}} , \]
and thus
\[ T_{\thetas{\varnothing}} = -\trans{\Theta}\w\Omega  + \xi_\varnothing\w\thetas{\varnothing} \]
for a semi-basic 1-form $\xi_\varnothing$. Adding $\xi_\varnothing$ to $\kappa_\varnothing$ will not affect Cartan's structure equation, but will absorb the torsion. Doing so simplifies the first component of \eqref{eq:0 adapted Cartan structure} to
\[ \d\thetas{\varnothing} = -\kappa_\varnothing \w\thetas{\varnothing} - \trans{\Theta}\w\Omega . \]
It is clear that no other modification of $\kappa_\varnothing $ can be made to absorb the remaining torsion, and that $\kappa_\varnothing $ is uniquely defined up to a multiple of $\thetas{\varnothing}$.

An analogous calculation shows that there is a matrix of semi-basic 1-forms
\[ \xi=\left( \begin{array}{cc} \xi^0_0 & \xi^j_0 \\ \xi_i^0 & \xi^j_i \end{array} \right) \]
so that
\[ \d{\Theta} = -(\kappa_a)\w\thetas{\varnothing} - \beta\w\Theta -\Pi\w\Omega - \xi\w\Theta . \]
From this equation it is clear that semi-basic forms may be added to $\beta$ in such a way that the torsion $\xi$ reduces to
\[ \xi=\left( \begin{array}{cc} 0 & 0 \\ \xi_i^0 & \xi^j_i \end{array} \right) , \]% , \xi'=\left( \begin{array}{cc} \xi^j_i & \xi_i^0 \end{array} \right) , \]
where furthermore $\xi_i^j$ is symmetric and trace-free,
\[ \xi^i_j = \xi^j_i \quad \mbox{ and } \quad \xi_i^i = 0 . \]

The torsion forms $\xi$ also control the torsions of the forms $\omega^a$. Indeed, from the structure equations,
\begin{align*} 0 = \d^2\thetas{\varnothing} & \equiv d(-\kappa_\varnothing \w\thetas{\varnothing} - \trans{\Theta}\w\Omega) \\
& \equiv \trans{\Theta}\w(-\trans{\xi}\w\Omega + T_\Omega)
\mod{\thetas{\varnothing}} ,
\end{align*}
where in the last line, the sign on the first term arises because $\trans{(\xi \w \Theta)} = -\trans{\Theta}\w\trans{\xi}$.
An application of the generalized Cartan's lemma shows that
\[ T_\Omega \equiv H\w\Theta + \trans{\xi}\w\Omega \mod{\thetas{\varnothing}} , \]
where $H$ is a semi-basic, $\Sym^2 W^\vee$-valued 1-form. Thus
\[ \d\Omega \equiv - (\sigma-H)\w\Theta - (\kappa_\varnothing  - \trans{\beta})\w\Omega + \trans{\xi}\w\Omega \mod{\thetas{\varnothing}} . \]
By modifying $\sigma$ accordingly, all of the torsion $H$ may be absorbed.

To summarize the structure equations so far,
\begin{align*}
\d\thetas{\varnothing} & = -\kappa_{\varnothing}\w\thetas{\varnothing} - \thetas{a}\w\omega^a , \\
\d\thetas{i} & = -\kappa_i\w\thetas{\varnothing} -\beta_i^j\w\thetas{j} - \xi_i^a\w\thetas{a} - \pis{ia}\w\omega^a , \\
\d\thetas{0} & \equiv - \beta_0^a\w\thetas{a} - \pis{0a}\w\omega^a && \mod{\thetas{\varnothing}} , \\
\d\omega^0 & \equiv - \sigma^{0a}\w\thetas{a} - (\kappa_{\varnothing} - \beta_0^0)\w\omega^0 + {\xi}_i^0\w\omega^i && \mod{\thetas{\varnothing}} \\
\d\omega^i & \equiv - \sigma^{ia}\w\thetas{a} - (\delta^i_j\kappa_{\varnothing} - \beta^i_j)\w\omega^j + \beta_0^i\w\omega^0 + \xi^i_j\w\omega^j && \mod{\thetas{\varnothing}} \\
\d\pis{ab} & \equiv - \gamma_{ab}^c\w\thetas{c} - \tau_{abc}\w\omega^c + \kappa_{\varnothing}\w\pis{ab} - \beta_a^c\w\pis{cb} + \pis{ac}\w\beta_b^c  + T_{\pis{ab}} && \mod{\thetas{\varnothing}}
\end{align*}

Finally, $\xi$ also determines the `highest weight' torsion of $\Pi$. For example, one computes that
\[ 0 = \d^2\thetas{i} \equiv (-T_{\pis{ij}} - \xi_i^a\w\pis{aj} + \pis{ia}\w\xi^a_j)\w\omega^j \mod{\thetas{\varnothing}, \thetas{a}, \omega^0} \]
and thus by Cartan's Lemma,
\[ T_{\pis{ij}} \equiv -\xi_i^a\w\pis{aj} + \pis{ia}\w\xi^a_j \mod{\cartTheta} . \]

\section{The Monge-Amp\`ere Invariants for Parabolic Systems}\label{sec:MA invariants}
The first invariants one finds for parabolic systems derive from the 1-forms $\xi_i^a$. Because the $\xi_i^a$ are semi-basic, there are functions $V_i^{abc}$ on $\gBun_{0}$ so that
\[ \xi_i^a \equiv V_i^{abc}\pis{bc} \mod{\cartTheta} . \]
As both $\xi_i^j$ and $\pis{kl}$ are symmetric and traceless, the functions $V_i^{jkl}$ are symmetric and (spatially) traceless in the $ij$ indices as well as the $kl$ indices. Likewise, $V_i^{0kl}$ is symmetric and spatially traceless in $kl$ indices.
Further symmetries follow from
\[ \pis{ii} \equiv 0 \mod{\cartTheta} , \]
for then
\[ 0 \equiv \d\pis{ii} \equiv -2\xi_i^a\w\pis{ai} \equiv -2V_i^{abc}\pis{bc}\w\pis{ai} \mod{\cartTheta} \]
and thus
\[ V_i^{abc} = V_{b}^{cia} , \]
with the understanding that $V_i^{a00} = 0$ identically for each $i$ and $a$.

\subsection{Primary Monge-Amp\`ere Invariants}
The primary Monge-Amp\`ere invariants are the non-absorbable components of $V^{0j0}_i$ and $V^{0jk}_i$, the coefficients of $\xi_i^0$ in
% (which are respectively $\R^n \otimes (\R^n)^\vee$-valued and $\R^n \otimes \Sym^2_0(\R^n)^\vee$-valued)
\[ \xi_i^0 \equiv V^{0j0}_i\pis{j0} + V^{0jk}_i\pis{jk} \mod{\thetas{\varnothing},\thetas{a},\omega^a} . \]

To see how these functions vary in each fiber, consider the exterior derivative of Cartan's structure equation. In indices,
\[ \d\thetas{i} \equiv -\beta_i^j\w\thetas{j} - \xi^j_i\w\thetas{j} - \xi_i^0\w\thetas{0} - \pis{ij}\w\omega^j - \pis{i0}\w\omega^0 \mod{\thetas{\varnothing}} . \]
Taking the exterior derivative,
\[ 0 \equiv -\left(\beta_i^j\w\xi_j^0 + d(\xi_i^0) + \xi_i^0\w\beta_0^0 + \pis{ij}\w\sigma^{j0} + \pis{i0}\w\sigma^{00} \right)\w\thetas{0}
\mod{\thetas{\varnothing},\thetas{i},\omega^a,\sbforms^3} , \]
so that by Cartan's Lemma,
\begin{equation}\label{eq:MA variation} 0 \equiv d(\xi_i^0) + \beta_i^j\w\xi_j^0 + \xi_i^0\w\beta_0^0 + \pis{ij}\w\sigma^{j0} + \pis{i0}\w\sigma^{00}
  \mod{\cartTheta,\sbforms^2} .
\end{equation}

At the `highest weight', after plugging in for $\xi_i^0$, equation \eqref{eq:MA variation} simplifies to
\[ 0 \equiv (d(V_i^{0j0}) + \beta_i^kV^{0j0}_k - V_i^{0k0}\beta_k^j + V_i^{0j0}(\kappa_\varnothing -2\beta^0_0) - \delta_i^j\sigma^{00})\w\pis{j0} \mod{\cartTheta,\pis{ij},\sbforms^2} , \]
which implies that
\[ d(V_i^{0j0}) \equiv -\beta_i^k V^{0j0}_k + V_i^{0k0}\beta_k^j + (2\beta^0_0 - \kappa_\varnothing)V_i^{0j0} + \delta_i^j\sigma^{00} \mod{\sbforms^1} \]
for all $i$ and $j$.
Integrating this, one finds that the function $(V_i^{0j0})$ on $\gBun_0$ is $G_0$-equivariant\footnote{To be precise, this argument only works for the identity component of $G_0$. However, one can check the variation of $(V_i^{0j0})$ for one element in each component of $G_0$ to see that it really is a relative invariant.}. Indeed, for a 0-coframe $u$ and a matrix $g$ as in \eqref{eq:G_0 structure matrix},
\[ \left(V_i^{0j0}(g\cdot u)\right) = \frac{(B^0_0)^2}{k_\varnothing}(b\mathring{B})^{-1}\left(V_i^{0j0}(u)\right)b\mathring{B} + S^{00}\delta_i^j = \frac{(B^0_0)^2}{k_\varnothing}\mathring{B}_i^k\left(V_k^{0l0}(u)\right)\mathring{B}^j_l + S^{00}\delta_i^j . \]
Due to the last term, there are choices of coframe for which $V_i^{0j0}$ is traceless in $ij$. The subbundle of such coframes has structure group consisting of matrices as in \eqref{eq:G_0 structure matrix} such that $S^{00} = 0$. When restricted to this reduced $G$-structure, the traceless component of $(V_i^{0j0})$ becomes a relative invariant. In particular, if $(V_i^{0j0})$ vanishes at a point in one coframing, then it vanishes in all coframings at that point. This is by definition the \emph{highest weight primary Monge-Amp\`ere invariant}. Note that the pseudo-connection form $\sigma^{00}$ becomes semi-basic in the reduction.

Now suppose that $(V_i^{0j0})$ vanishes identically and the coframe reduction has been carried out. Then \eqref{eq:MA variation} simplifies to
\begin{align*}
     0 \equiv \left(\d V^{0jk}_i + (\kappa_\varnothing  - \beta^0_0)V^{0jk}_i + \beta_i^l V_l^{0jk} - V_i^{0lk} \beta_l^j \right. & \\
     \left. - V_i^{0jl} \beta_l^k - \frac{1}{2}\delta_i^j\sigma^{0k} - \frac{1}{2}\delta_i^k\sigma^{0j} + \frac{1}{n}\delta^{jk}\sigma^{0i} \right) & \w\pis{jk} \mod{\cartTheta,\sbforms^2} ,
\end{align*}
so that
\begin{align*}
    \d V^{0jk}_i \equiv -\beta_i^l V_l^{0jk} & + V_i^{0lk} \beta_l^j + V_i^{0jl} \beta_l^k + (\beta^0_0 - \kappa_\varnothing)V^{0jk}_i \\
    & + \frac{1}{2}\delta_i^j\sigma^{0k} + \frac{1}{2}\delta_i^k\sigma^{0j} - \frac{1}{n}\delta^{jk}\sigma^{0i} \mod{\sbforms^1} .
\end{align*}
Integrating, $(V^{0jk}_i)$ transforms as
\[ \left(V^{0jk}_i(g\cdot u)\right) = \frac{b B_0^0}{k_\varnothing} \mathring{B}_i^l\left(V^{0mn}_l(u)\right)\mathring{B}_m^j\mathring{B}_n^k + \frac{1}{2}\delta_i^j S^{0k} + \frac{1}{2}\delta_i^k S^{0j} - \frac{1}{n}\delta^{jk}S^{0i} \]
in each fiber.
A coframe adaptation may be made to absorb the trace components of this representation, so that
\[ V_i^{0ik} = 0 \]
for each $k$.
Such coframes are called \emph{1-adapted}.
After this coframe adaptation, the remaining components of $(V^{0jk}_i)$ are relative invariants, the lowest weight piece of the primary Monge-Amp\`ere invariant.

The subbundle of 1-adapted coframes $\gBun_1 \subset \gBun_0$ has structure group $G_{1}$, which consists of matrices as in \eqref{eq:G_0 structure matrix} so that
\[ S^{00} = S^{0i} = 0 . \]
Upon restricting to $\gBun_1$, the pseudo-connection forms $\sigma^{0a}$ become semi-basic, contributing new non-absorbable torsion.

It has just been shown that all of the primary Monge-Amp\`ere invariants vanish if and only if there are parabolic coframings so that
\[ \xi_i^0 \equiv 0 \mod{\cartTheta} , \]
in which case
\[ 0 = \d^2\thetas{i} \equiv (\sigma^{0a}\w\pis{ia})\w\thetas{0} \mod{\thetas{\varnothing},\thetas{i},\omega^a} . \]
Then
\[ 0 \equiv \sigma^{0a}\w\pis{ia} \mod{\cartTheta} , \]
and, assuming $n\geq 3$, one may apply Cartan's Lemma three times, with $i = 1, 2, 3$ to deduce that
\[ \sigma^{0a} \in \{\cartTheta, \pis{1b}\} \cap \{\cartTheta, \pis{2b}\} \cap \{\cartTheta, \pis{3b}\} = \cartTheta \]
so that
\[ \sigma^{0a}\equiv 0 \mod{\cartTheta} . \]

\subsection{Secondary Monge-Amp\`ere Invariants}
Consider a parabolic system with vanishing primary Monge-Amp\`ere invariants and assume the $G_1$-reduction of coframing has been made, so that $V^{0j0}_i = V^{0jk}_i = 0$. There are $V_i^{jkl}$ so that
\[ \xi_i^j \equiv V_i^{jkl}\pis{kl} \mod{\cartTheta} , \]
and the non-absorbable components of these functions are the \emph{secondary Monge-Amp\`ere invariants}. Since the primary Monge-Amp\`ere invariants vanish,
\[ \d\thetas{i} \equiv - \beta_i^j\w\thetas{j} - \xi^j_i\w\thetas{j} - \pis{ij}\w\omega^j - \pis{i0}\w\omega^0 \mod{\thetas{\varnothing},\Lambda^2 \cartTheta} , \]
and thus
\begin{multline*}
    0 \equiv -\left(\d(\xi_i^j) + \xi_i^k\w\beta_k^j + \beta_i^k\w\xi_k^j + \pis{ik}\w\sigma^{kj} \right. \\
    \left. + \d\beta_i^j + \beta_i^k\w\beta_k^j \right)\wedge\thetas{j} \mod{\thetas{\varnothing},\omega^a,\Lambda^2 \cartTheta,\sbforms^3} .
\end{multline*}
% \[ 0 \equiv -\left(\d(\xi_i^j) + \xi_i^k\w\xi_k^j + \xi_i^k\w\beta_k^j + \beta_i^k\w\xi_k^j + \pis{ik}\w\sigma^{kj} + \d\beta_i^j + \beta_i^k\w\beta_k^j \right)\wedge\thetas{j} \mod{\thetas{\varnothing},\omega^a,\Lambda^2 \cartTheta,\sbforms^3} \]
By an application of Cartan's lemma,
\begin{equation}\label{eq: total xi_i^j variation}
  0 \equiv \d(\xi_i^j) + \xi_i^k\w\xi_k^j + \xi_i^k\w\beta_k^j + \beta_i^k\w\xi_k^j + \pis{ik}\w\sigma^{kj} + \d\beta_i^j + \beta_i^k\w\beta_k^j \mod{\cartTheta,\sbforms^2}
\end{equation}
One would have to prolong the $G_1$-structure to properly describe the $\d\beta + \beta\w\beta$ term, but that won't be necessary here. Instead, consider just the component of \eqref{eq: total xi_i^j variation} that is symmetric and traceless in $i$ and $j$,
\begin{equation}\label{eq: xi_i^j variation}
    0 \equiv \d(\xi_i^j) + \xi_i^k\w\beta_k^j + \beta_i^k\w\xi_k^j + \frac{1}{2}(\pis{ik}\w\sigma^{kj} + \pis{jk}\w\sigma^{ki})_{0} \mod{\cartTheta,\sbforms^2} .
\end{equation}
This equation determines the variation of the secondary Monge-Amp\`ere invariants in each fiber.

Substituting for $\xi_i^j$, Equation \ref{eq: xi_i^j variation} simplifies to
\begin{align*}
    0 \equiv & \left( \d V_i^{jkl} + \kappa_\varnothing V_i^{jkl} + \beta_i^m V_m^{jkl} - V_i^{mkl} \beta_m^j - V_i^{jml} \beta_m^k  - V_i^{jkm} \beta_m^l \right. \mod{\cartTheta, \sbforms^2} \\
    & \left. +\frac{1}{4}(\delta_i^k\sigma^{jl}+\delta_i^l\sigma^{jk}+\delta_j^k\sigma^{il}+\delta_j^l\sigma^{ik}) - \frac{1}{n}\delta_i^j\sigma^{kl} - \frac{1}{n}\delta_k^l\sigma^{ij} + \frac{1}{n^2}(\delta_i^j\delta_k^l)\sigma^{mm} \right) \w \pis{kl}  ,
\end{align*}
so that
\begin{align*}
    \d V_i^{jkl} \equiv & -\kappa_\varnothing V_i^{jkl} - \beta_i^m V_m^{jkl} + V_i^{mkl} \beta_m^j + V_i^{jml} \beta_m^l + V_i^{jkm} \beta_m^l \mod{\sbforms^1} \\
    & - \frac{1}{4}(\delta_i^k\sigma^{jl}+\delta_i^l\sigma^{jk}+\delta_j^k\sigma^{il}+\delta_j^l\sigma^{ik}) + \frac{1}{n}\delta_i^j\sigma^{kl} + \frac{1}{n}\delta_k^l\sigma^{ij} - \frac{1}{n^2}(\delta_i^j\delta_k^l)\sigma^{mm}  .
\end{align*}
% \[ 0 \equiv \left( \substack{\d V_i^{jkl} + \kappa_\varnothing V_i^{jkl} + \beta_i^m V_m^{jkl} - V_i^{mkl} \beta_m^j - V_i^{jml} \beta_m^l  - V_i^{jkm} \beta_m^l}{-\frac{1}{4}(\delta_i^l\sigma^{jk}+\delta_i^l\sigma^{jk}+\delta_i^l\sigma^{jk}+\delta_i^l\sigma^{jk})}\right) \w \pis{kl} \mod{\cartTheta, \sbforms^2} , \]
Integrating, $V_i^{jkl}$ varies as
\begin{multline*}
    \left(V_i^{jkl}(g\cdot u)\right) = \frac{b^2}{k_\varnothing} \mathring{B}_i^m \left(V_m^{npq}(u)\right) \mathring{B}_n^j\mathring{B}_p^k\mathring{B}_q^l - \frac{1}{4}(\delta_i^kS^{jl}+\delta_i^lS^{jk}+\delta_j^kS^{il}+\delta_j^lS^{ik}) \\ + \frac{1}{n}\delta_i^j S^{kl} + \frac{1}{n}\delta_k^l S^{ij} - \frac{1}{n^2}(\delta_i^j\delta_k^l)S^{mm}.
\end{multline*}
As such, there is a reduction of coframes so that
\[ V_i^{jil} = 0 \]
for all $j$ and $l$, which reduces $\gBun_1$ to a $G_{MA}$-structure $\gBun_{MA}$, where $G_{MA}$ is the subgroup of matrices as in \eqref{eq:G_0 structure matrix} for which
\[ S = 0 . \]
The pseudo-connection forms $\sigma^{ij}$ are semi-basic when restricted to $\gBun_{MA}$. The remaining un-absorbed components of $V_i^{jkl}$ are the secondary Monge-Amp\`ere invariants, which take values in $\Sym_0^2 (\Sym_0^2 \R^n)$.

To summarize, we have nearly proved the following Proposition.
\begin{proposition}\label{thm: (prop) MA invariants vanish if xi, sigma vanish mod}
    The primary and secondary Monge-Amp\`ere invariants of a parabolic system $M$ vanish identically if and only if there is a reduction of coframings to a $G_{MA}$-structure $\gBun_{MA}$ so that the torsion form $\xi$ satisfies
    \[ \xi_i^0 \equiv \xi_i^j \equiv 0 \mod{\cartTheta} . \]

    Furthermore, after reducing coframes, the torsion $\sigma$ satisfies
    \[ \sigma^{00} \equiv \sigma^{i0} \equiv 0 \mod{\cartTheta} , \]
    \[ \sigma^{ij} \equiv V\pis{ij} \mod{\cartTheta} , \]
    for a single function $V$, the \emph{tertiary Monge-Amp\`ere invariant}.
\end{proposition}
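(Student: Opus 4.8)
The proof will essentially consist of assembling the fiber-variation computations that were carried out for the primary and secondary Monge-Amp\`ere invariants in the preceding two subsections, and then running one more step of the absorption/reduction procedure to extract the tertiary invariant. The forward direction is the substantive part; the converse is immediate, since if a $G_{MA}$-reduction exists with $\xi_i^0 \equiv \xi_i^j \equiv 0 \pmod{\cartTheta}$, then by the formulas $\xi_i^0 \equiv V_i^{0j0}\pis{j0} + V_i^{0jk}\pis{jk}$ and $\xi_i^j \equiv V_i^{jkl}\pis{kl}$ modulo $\cartTheta$, all the $V$-functions that constitute the primary and secondary invariants vanish.

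\textbf{First steps.} First I would record that by the two preceding subsections, the primary Monge-Amp\`ere invariants vanish identically if and only if there is a $G_1$-reduction on which $V_i^{0j0} = V_i^{0jk} = 0$, equivalently $\xi_i^0 \equiv 0 \pmod{\cartTheta}$; and then that, given this, the secondary Monge-Amp\`ere invariants vanish identically if and only if the $G_1$-structure further reduces to a $G_{MA}$-structure $\gBun_{MA}$ on which $V_i^{jkl} = 0$, equivalently $\xi_i^j \equiv 0 \pmod{\cartTheta}$. This gives the stated equivalence for $\xi$. Next, on $\gBun_{MA}$ the pseudo-connection forms $\sigma^{0a}$ and $\sigma^{ij}$ have all become semi-basic (as noted when each reduction was performed), so $\sigma$ as a whole is semi-basic modulo $\cartTheta$, and I can write $\sigma^{ab} \equiv W^{ab}_{cd}\pis{cd} \pmod{\cartTheta}$ for functions $W$ on $\gBun_{MA}$.

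\textbf{The main computation.} The heart of the argument is to show $\sigma^{00} \equiv \sigma^{i0} \equiv 0$ and $\sigma^{ij} \equiv V\pis{ij}$ modulo $\cartTheta$. The tool is the same as before: differentiate the structure equations. Since $\xi_i^0 \equiv 0 \pmod{\cartTheta}$, taking $\d$ of the structure equation for $\d\thetas{i}$ and reducing modulo $\thetas{\varnothing},\thetas{i},\omega^a$ isolates (as in the computation already displayed at the end of Section~\ref{sec:MA invariants}, $0 = \d^2\thetas{i} \equiv (\sigma^{0a}\w\pis{ia})\w\thetas{0}$) the relation $\sigma^{0a}\w\pis{ia} \equiv 0 \pmod{\cartTheta}$; applying Cartan's Lemma for $i = 1,2,3$ (using $n \geq 3$) forces $\sigma^{0a} \equiv 0 \pmod{\cartTheta}$, which is exactly $\sigma^{00} \equiv \sigma^{i0} \equiv 0 \pmod{\cartTheta}$. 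For the $\sigma^{ij}$ claim, I would differentiate the structure equation for $\d\omega^i$ (or equivalently feed $\xi_i^j \equiv 0$ back into Equation~\eqref{eq: xi_i^j variation}): with all secondary invariants absorbed, \eqref{eq: xi_i^j variation} collapses to $\tfrac12(\pis{ik}\w\sigma^{kj} + \pis{jk}\w\sigma^{ki})_0 \equiv 0 \pmod{\cartTheta}$, i.e.\ the symmetric-traceless part in $ij$ of $\pis{ik}\w\sigma^{kj}$ vanishes. Writing $\sigma^{ij} = W^{ij}_{kl}\pis{kl} \pmod{\cartTheta}$ and using the genericity of $\pis{kl}$ in $\Sym^2_0\R^n$ together with Cartan's Lemma, this algebraic constraint on $W$ is rigid enough — this is the step I expect to be the main obstacle — to force $\sigma^{ij} \equiv V\pis{ij} \pmod{\cartTheta}$ for a single scalar function $V$; that is, the only surviving $\SO(n)$-equivariant piece of $W$ compatible with the wedge identity is the trace-type term $\delta$-proportional to the identity endomorphism of $\Sym^2_0\R^n$. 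One then checks that $V$ is unchanged under any residual $G_{MA}$-freedom (the $S=0$ condition has killed precisely the translations that could shift $\sigma^{ij}$ by non-semi-basic amounts), so $V$ is a genuine relative invariant, the tertiary Monge-Amp\`ere invariant.

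\textbf{Wrap-up.} Finally, I would note that the "nearly proved" phrasing in the statement means the bookkeeping of which $\sigma$-components became semi-basic at which reduction has been done along the way, so only the $\d^2$-computations above and the representation-theoretic extraction of the single function $V$ remain to be written out; the representation theory here is elementary since $\Sym^2_0\R^n$ is irreducible as an $\SO(n)$-module for $n \geq 3$, so $\Hom_{\SO(n)}(\Sym^2_0\R^n, \Sym^2_0\R^n)$ is one-dimensional, which is morally why $V$ is a single function rather than a tensor.
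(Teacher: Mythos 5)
Your outline follows the same route as the paper: the converse by substituting $\xi_i^0 \equiv V_i^{0j0}\pis{j0} + V_i^{0jk}\pis{jk}$, $\xi_i^j \equiv V_i^{jkl}\pis{kl}$; the forward direction by citing the $G_1$- and $G_{MA}$-reductions already performed; and $\sigma^{00}\equiv\sigma^{i0}\equiv 0$ modulo $\cartTheta$ from $\d^2\thetas{i}=0$ together with three applications of Cartan's Lemma (this is exactly the paper's argument, already carried out at the end of the primary-invariant discussion). But the one step you yourself flag as the main obstacle is left unproved, and the justification you sketch for it is not valid. The claim that the constraint $(\pis{ik}\w\sigma^{kj}+\pis{jk}\w\sigma^{ki})_0 \equiv 0$ modulo $\cartTheta$ forces $\sigma^{ij}\equiv V\pis{ij}$ for a single function $V$ does not follow from the one-dimensionality of $\Hom_{\SO(n)}(\Sym^2_0\R^n,\Sym^2_0\R^n)$: the unknown coefficients $W^{ij}_{kl}$ form an arbitrary pointwise tensor, not an equivariant map, so what must be shown is that the kernel of the particular equivariant contraction defined by the constraint---acting on the full space where $W$ lives, which a priori also admits $\pis{00}$- and $\pis{0k}$-components of $\sigma^{ij}$---is the span of the identity. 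That is precisely the content of the assertion, not a consequence of irreducibility. The paper closes it concretely: setting $i=j$ gives $0\equiv\pis{ik}\w\sigma^{ki}$ (sum over $k$, $i$ fixed), so Cartan's Lemma yields $\sigma^{ik}\equiv 0$ modulo $\pis{i1},\ldots,\pis{in},\cartTheta$; the $ik$-symmetry of $\sigma$ then pins $\sigma^{ik}\equiv V_{ik}\pis{ik}$ (no sum), and the off-diagonal cases $i\neq j$ of \eqref{eq:sigma pi} force all the $V_{ik}$ to coincide. Some argument of this kind must be written out for the proposition to be proved.

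Two smaller points. Feeding $\xi_i^j\equiv 0$ back into Equation \eqref{eq: xi_i^j variation} does not deliver the needed relation, because that congruence holds only modulo $\sbforms^2$, and once $\sigma^{ij}$ has become semi-basic on $\gBun_{MA}$ the terms $\pis{ik}\w\sigma^{kj}$ lie in $\sbforms^2$ and are swallowed; the paper instead rederives the constraint from $\d^2\thetas{i}=0$ after the reduction, where the congruence is taken only modulo $\cartTheta$ and the $\d\beta+\beta\w\beta$ terms are eliminated by passing to the $ij$-symmetric traceless part. (Differentiating the equation for $\d\omega^i$ is what the paper uses afterwards, to study $V$ itself, not to obtain the constraint.) Finally, $V$ is not unchanged under the residual $G_{MA}$-action: one finds $\d V\equiv V(2\kappa_\varnothing-4\beta_{\tr})$ modulo semi-basic forms, so $V$ scales by a character of the group. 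It is therefore only a relative invariant---which is all the proposition requires---but the stronger claim as you state it is false.
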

\begin{proof}
    Only the final statement remains to be proven. After reduction to $\gBun_{MA}$, we have
    \[ \d\thetas{i} \equiv  - \beta_i^j\w\thetas{j} - \pis{ij}\w\omega^j - \pis{i0}\w\omega^0 \mod{\thetas{\varnothing},\Lambda^2 \cartTheta} , \]
    so from $\d^2 \theta_i = 0$,
    \[ 0 \equiv -\left(\pis{ik}\w\sigma^{kj} + \d\beta_i^j + \beta_i^k\w\beta_k^j \right)\wedge\thetas{j} \mod{\thetas{\varnothing},\omega^a,\Lambda^2 \cartTheta} . \]
    Taking the $ij$-symmetric and traceless component gives
    \begin{equation}\label{eq:sigma pi}
        0 \equiv (\pis{ik}\w\sigma^{kj} + \pis{jk}\w\sigma^{ki})_0 \mod{\cartTheta} .
    \end{equation}
    Consider first the special case $i=j$, so that (summing over $k$ but not $i$)
    \[ 0 \equiv \pis{ik}\w\sigma^{ki} \mod{\cartTheta} . \]
    Applying Cartan's Lemma, one finds that for each fixed $i, k$,
    \[ \sigma^{ik} \equiv 0 \mod{\pis{i1}, \ldots, \pis{in}, \cartTheta} . \]
    But then from the $ik$ symmetry of $\sigma^{ik}$,
    \[ \sigma^{ik} \equiv 0 \mod{\pis{k1}, \ldots, \pis{kn}, \cartTheta} , \]
    so in fact,
    \[ \sigma^{ik} \equiv  V_{ik}\pis{ik} \mod{\cartTheta} \]
    for scalars $ V_{ik}$.
    Plugging this into Equation \ref{eq:sigma pi} for distinct $i$ and $j$, it follows immediately that for each $k$,
    \[  V_{ik} =  V_{jk} , \]
    so that all are equal to single function $ V$.

    Now,
    \[ \d \omega^i \equiv - V\pis{ij}\w\thetas{j} - (\delta^i_j \kappa_{\varnothing} - \beta^i_j)\w\omega^j + \beta^i_0\w\omega^0 \mod{\thetas{\varnothing}, \Lambda^2 \cartTheta} , \]
    and taking the derivative of both sides,
    \[ 0 \equiv -(\d V -  V(2\kappa_{\varnothing} - 4\beta_{\tr}))\w\pis{ij}\w\thetas{j} \mod{\thetas{\varnothing}, \omega^a, \Lambda^2 \cartTheta, \sbforms^3} . \]
    It follows that $ V$ is a relative invariant, which vanishes for one MA-adapted coframings if and only if it vanishes in all MA-adapted coframings.
\end{proof}

\section{Monge-Amp\`ere Systems}\label{sec:Monge-Ampere Systems}
\subsection{Monge-Amp\`ere equations}
The Monge-Amp\`ere invariants just developed can be used to characterize paraboblic equations of Monge-Amp\`ere type.
Recall that a Monge-Amp\`ere equation is a \nth{2}-order equation of the form
\begin{equation}\label{eq:MA differential equation}
  F(x^a,u,p_a,p_{ab}) = \sum_{\substack{I, J \subseteq \{0,\ldots,n\} \\ |I|=|J|}} A_{I,J}(x^a,u,p_a) H_{I,J} = 0 ,
\end{equation}
where $H_{I,J}$ is the row $I$, column $J$ minor sub-determinant of the Hessian of $u$.

The following definition is from \cite{BryantGriffithsGrossman:EDSandEulerLagrangePDEs}, in the context of Euler-Lagrange equations.
\begin{definition}\label{def: Monge Ampere EDS system}
  A \emph{Monge-Amp\`ere system} in $n+1$ variables is a $2n+3$ dimensional exterior differential system $(M,\I)$ such that $\I$ is locally generated by a 1-form $\thetasu{\varnothing}$ and an $(n+1)$-form $\Upsilon$ satisfying:
  \begin{enumerate}
    \item $\thetasu{\varnothing}$ is maximally non-integrable:
    \[ \thetasu{\varnothing} \w (\d\thetasu{\varnothing})^{n+1} \neq 0 . \]
    \item At each point of $M$,
    \[ \Upsilon \not\equiv 0 \mod{\thetasu{\varnothing}, \d\thetasu{\varnothing}} . \]
  \end{enumerate}
\end{definition}
Equivalently, such systems are defined by local coframings
\[ \thetasu{\varnothing}, \omegu^a, \pisu{a} \]
so that
\[ \d\thetasu{\varnothing} \equiv - \pisu{a}\w\omegu^a \mod{\thetasu{\varnothing}} \]
and
\begin{equation}\label{eq:Upsilon in coframing}
  \Upsilon \equiv \sum_{\substack{I, J \subseteq \{0,\ldots,n\}  \\ |I|=|J|}} A_{I,J}\pisu{I} \w \omegasu{J} \mod{\thetasu{\varnothing} }
\end{equation}
for functions $A_{I,J}$ on $M$. Here
\[ \pisu{I} = \prod_{a \in I} \pisu{i} \]
and $\omegasu{J}$ is the omitted index notation,
\[ \omegasu{J} = \pm\prod_{a \not \in J} \omegu^a \]
with signs specified by the condition that
\[ \omegu_{J}\w\omegasu{J} = \omegasu{\varnothing} . \]

As an element of $\I$, the $(n+1)$-form $\Upsilon$ is only defined up to scaling and addition of multiples of $\thetasu{\varnothing}$ and $\d\thetasu{\varnothing}$. However, by the pointwise Lefschetz decomposition from symplectic linear algebra\footnote{See, for example, \cite{BryantGriffithsGrossman:EDSandEulerLagrangePDEs}, Proposition 1.1.}, there is a unique multiple $\gamma\w\d\thetasu{\varnothing}$ so that $\Upsilon + \gamma\w\d\thetasu{\varnothing}$ is \emph{primitive},
\[ (\Upsilon + \gamma\w\d\thetasu{\varnothing})\w\d\thetasu{\varnothing} \equiv 0 \mod{\thetasu{\varnothing}} . \]
So, without loss of generality, assume that
\[ \Upsilon\w\d\thetasu{\varnothing} \equiv 0 \mod{\thetasu{\varnothing}} . \]
Furthermore, $\d\Upsilon$ is an $n+2$-form, so again by Lefschetz, there is a unique (modulo $\thetanotu$) $n$-form $\gamma'$ so that
\[ \d\Upsilon \equiv \d\thetanotu\w\gamma' \mod{\thetanotu} . \]
Subtracting $\thetanotu\w\gamma'$ from $\Upsilon$, one arranges that
\[ \d\Upsilon \equiv 0 \mod{\thetanotu} . \]
With this assumption, the representative $\Upsilon$ in $\I$ is uniquely defined up to scaling. In the following it will be assumed without comment that this normalization has been carried out.

At each level $r = |I| = |J| > 0$ in Equation \ref{eq:Upsilon in coframing}, the functions $A_{I,J}$ collectively take values in $\Lambda^r \R^{n+1} \otimes \Lambda^r \R^{n+1}$. Since $\Upsilon$ is assumed primitive, the $A_{I,J}$ furthermore satisfy a generalized Bianchi identity, taking values in the kernel of
\[ \Lambda^r \R^{n+1} \otimes \Lambda^r \R^{n+1} \xrightarrow{\qquad} \Lambda^{r+1} \R^{n+1} \otimes \Lambda^{r-1} \R^{n+1} . \]
This implies in particular that $A_{I,J} = A_{J,I}$ in equation \eqref{eq:Upsilon in coframing}.

Monge-Amp\`ere systems model the solutions of Monge-Amp\`ere equations, as follows. Let $M = J^1(\R^{n+1},\R)$ and fix the coframing
\[ \thetasu{\varnothing} = \d u - p_a \d x^a ,\quad \omegu^a = \d x^a,\quad \pisu{a} = \d p_a . \]
Corresponding to equation \eqref{eq:MA differential equation}, define the $(n+1)$-form
\[ \Upsilon = \sum_{|I|=|J|} A_{I,J}(x^a,u,p_a)\, \pisu{I} \w \omegasu{J} . \]
The Monge-Amp\`ere system $(M,\{\thetasu{\varnothing},\Upsilon\})$ has a natural independence condition, defined by the forms $\omegu^a$. Then any solution manifold is locally the 1-jet graph of a function $u(x^a)$, and the condition that $\Upsilon$ vanish forces $u$ to be a solution to \eqref{eq:MA differential equation}.

Conversely, any Monge-Amp\`ere system $(M,\{\thetasu{\varnothing},\Upsilon\})$ is locally equivalent to some Monge-Amp\`ere differential equation: By the Pfaff Normal Form Theorem, there are local coordinates $x^a, u, p_a$, and a nonzero function $\lambda$ on $M$ so that
\[ \thetasu{\varnothing} = \lambda(\d u - p_a \d x^a) . \]
The $\d x^a$'s determine an independence condition, albeit not contact invariantly. Fix a local coframing
\[ \thetasu{\varnothing} = \d u - p_a \d x^a ,\quad \omegu^a = \d x^a,\quad \pisu{a} = \d p_a . \]
There are functions $A_{I,J}$ so that
\[ \Upsilon = \sum_{|I|=|J|} A_{I,J}(x^a,u,p_a)\, \pisu{I} \w \omegasu{J} . \]

Consider a solution manifold $\Sigma$ of $M$. Since
\[ \left.\thetasu{\varnothing}\right|_{\Sigma} = 0 \quad \mbox{ and } \quad \d x^0\w\ldots\w\d x^n|_\Sigma \neq 0 , \]
$\Sigma$ is locally the graph of functions $u(x^a)$ and $p_a(x^a)$ so that
\[ p_a = \pDeriv{u}{x^a} \]
and thus
\[ \d p_a = \pDeriv{^2u}{x^a \partial x^b} \d x^b . \]
The condition that $\Upsilon$ vanishes when restricted to $\Sigma$ is equivalent to the condition that $u(x^a)$ solves the equation
\[ \sum_{|I|=|J|} A_{I,J} H_{I,J} = 0 . \]

The class of Monge-Amp\`ere systems is manifestly preserved by EDS equivalences, making clear that the class of Monge-Amp\`ere equations is preserved under changes of variables.

\subsection{Quasi-Parabolic Monge-Amp\`ere systems}

\begin{definition}
    A Monge-Amp\`ere EDS $(M, \I)$ as in Definition \ref{def: Monge Ampere EDS system} is \emph{quasi-parabolic} if
    there is (locally) a 1-form $\omegu^0$ on $M$ independent of $\thetasu{\varnothing}$ and so that
    \[ \omegu^0\w\Upsilon \equiv 0 \mod{\thetasu{\varnothing}, \d\thetasu{\varnothing}} , \]
    and any other such 1-form is a linear combination of $\thetasu{\varnothing}$ and $\omegu^0$.
\end{definition}
% In this case, $\omegu^0$ is called the \emph{characteristic} co-vector.

In this case, there are local Monge-Amp\`ere coframings
\[ \thetasu{\varnothing}, \omegu^a, \pisu{a} \]
extending $\thetasu{\varnothing}, \omegu^0$. From the fact that $\omegu^0\w\Upsilon \equiv 0$ and the symmetry $A_{I,J} = A_{J,I}$ it follows that
\begin{equation}\label{eq:Upsilon in parabolic coframing}
  \Upsilon \equiv \sum_{\substack{I, J \subseteq \{1,\ldots,n\} \\ |I|=|J|}} A_{I,J}\pisu{I} \w \omegasu{J} \mod{\thetasu{\varnothing}, \d\thetasu{\varnothing}} ,
\end{equation}
which differs from Equation \ref{eq:Upsilon in coframing} in the detail that the index sets $I$ and $J$ can no longer contain $0$.

If it happens that $\d\omegu^0\w\omegu^0 = 0$, then there are local choices of coordinates $x^a, u, p_a$ such that
\[ \thetasu{\varnothing} = \lambda(\d u - p_a \d x^a) \]
and
\[ \omegu^0 = \d x^0 . \]
The induced Monge-Amp\`ere differential equation will depend only on second derivatives in spatial directions.

The modifier \emph{quasi} is necessary because the symbol of a non-linear equation can depend on which solution one linearizes at. Consider, for example, the Monge-Amp\`ere equation
\[ F = p_0 - \sum_{i, j = 1}^n (p_{ii}p_{jj} - p_{ij}^{\ 2}) = 0 . \]
From
\[ \d F = \d p_0 - \sum_{i, j = 1}^n (p_{ii}\d p_{jj} + p_{jj}\d p_{ii} - 2p_{ij}\d p_{ij}) \]
it is clear that the symbol of $F$ depends on which 2-jet of a solution on linearizes around, and is only positive semi-definite in an open set of 2-jets of solutions.

In terms of exterior differential systems, a generic quasi-parabolic Monge-Amp\`ere equation $(M_{-1}, \I_{-1})$ will have prolongation $(\tilde{M}, \I)$, where $\tilde{M}$ satisfies conditions (1) and (2) of Definition \ref{def:weakly parabolic system}, but (3) can only hold in an open subset.

As a manifold, $\tilde{M} \subset \Gr_{n+1}(T M_{-1}, \omegasu{\varnothing})$ is the subset of $(n+1)$-planes $E$ that satisfy the independence condition and the condition $\left.\I_{-1}\right|_E = 0$.
The independence condition guarantees that elements $E$ in the fiber over $y \in M_{-1}$ are parameterized by numbers $p_{ab}$ so that
\[ E = \{\thetasu{\varnothing}, \pisu{a} - p_{ab}\omegu^b\}_y^\perp . \]
This defines the canonical contact forms
\[ \thetasu{a} = \pisu{a} - p_{ab}\omegu^b \]
on $\Gr_{n+1}(T M_{-1}, \omegasu{\varnothing})$, and the ideal $\I$ is the pullback to $\tilde{M}$ of
\[ \{ \thetasu{\varnothing}, \thetasu{a} \} . \]

% The projection of the Grassmanian bundle restricts to a submersion
% \[ q \colon \tilde{M} \xrightarrow{\quad} M_{-1} . \]
The condition $\d\thetasu{\varnothing}|_E = 0$ ensures that $p_{ab} = p_{ba}$, while $\Upsilon|_E = 0$ induces a single functional relation $F(y, p_{ij}) = 0$ for each $y$.
Explicitly,
\[ F(y, p_{ij}) = \sum_{\substack{I, J \subseteq \{1,\ldots,n\} \\ |I|=|J|}} A_{I,J}H_{I,J} , \]
where $H_{I,J}$ is the $I,J$ sub-minor determinant of the matrix $(p_{ij})$.
This is seen by noting that for each $I, J$,
\[ \pisu{I}\w\omegasu{J} = H_{I,J}\omegasu{\varnothing} + \frac{1}{2}\pDeriv{H_{I,J}}{p_{ij}}(\thetasu{i}\w\omegasu{j} + \thetasu{j}\w\omegasu{i}) + \ldots . \]
so that, pulling $\Upsilon$ up to $\Gr_{n+1}(T M_{-1}, \omegasu{\varnothing})$,
\[ \Upsilon^{(1)} = \sum_{\substack{I, J \subseteq \{1,\ldots,n\} \\ |I|=|J|}} A_{I,J}\pisu{I} \w \omegasu{J} = F \omegasu{\varnothing} + \frac{1}{2}\pDeriv{F}{p_{ij}}(\thetasu{i}\w\omegasu{j} + \thetasu{j}\w\omegasu{i}) + \ldots . \]

Pulling back to $\tilde{M}$, we have
\[ \Upsilon^{(1)} \equiv \frac{1}{2}\pDeriv{F}{p_{ij}}(\thetasu{i}\w\omegasu{j} + \thetasu{j}\w\omegasu{i}) \mod{\thetasu{\varnothing}, \Lambda^2 \I} , \]
where the signature of the matrix $(\pDeriv{F}{p_{ij}})$ determines the symbol of $\tilde{M}$ at each point. Now take $M \subset \tilde{M}$ to be any open set where $(\pDeriv{F}{p_{ij}})$ is positive definite. The exterior differential system $(M,\I)$ is by construction a parabolic system, with parabolic reframings that diagonalize $(\pDeriv{F}{p_{ij}})$. In any parabolic coframing on $M$,
\[ \Upsilon^{(1)} \equiv \thetasu{i}\w\omegasu{i} \mod{\thetasu{\varnothing}, \Lambda^2 \I} . \]
Furthermore, from
\[ \d\Upsilon \equiv 0 \mod{\thetanotu, \d\thetanotu} \]
and commutativity of $\d$ with pullback, it follows that
\[ \d\Upsilon^{(1)} \equiv 0 \mod{\thetasu{\varnothing}, \d\thetasu{\varnothing}, \Upsilon^{(1)}} . \]

It turns out that the existence of such an $\Upsilon^{(1)}$ characterizes the parabolic systems that are the prolongation of a quasi-parabolic system, and that furthermore, the Monge-Amp\`ere invariants characterize the existence of $\Upsilon^{(1)}$. This is summarized in the following Theorem.
\begin{theorem}\label{thm:deprolongation iff invariants vanish iff deprolongation form}
    Given a parabolic system $(M,\I)$ the following are equivalent:
    \begin{enumerate}
        \item $M$ is locally EDS equivalent to a neighborhood in the prolongation of a quasi-parabolic Monge-Amp\`ere system.

        \item There is an $(n+1)$-form $\Upsilon$ on $M$,
        \[ \Upsilon \equiv \thetasu{i} \w \omegasu{i} + \sum_{\substack{I, J \subseteq \{1,\ldots,n\} \\ |I|=|J| > 1}} A_{I,J}\thetasu{I} \w \omegasu{J} \mod{\thetasu{\varnothing}} \]
        such that
        \[ \d\Upsilon \equiv 0 \mod{\thetas{\varnothing}, \d\thetas{\varnothing}, \Upsilon} . \]

        \item The Monge-Amp\`ere invariants of $M$ satisfy $V_i^{0j0} = V_i^{0jk} = 0$ and
        \[ V_i^{jkl} \in \bianchi_2' := \ker(\Sym^2_0 ( \Sym^2_0 \R^n) \xrightarrow{\qquad} \Sym_0^3 \R^n \otimes \R^n) . \]
    \end{enumerate}
\end{theorem}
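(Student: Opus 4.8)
The plan is to prove the cycle $(1)\Rightarrow(2)\Rightarrow(3)\Rightarrow(1)$; the middle implication is a differentiation, and the last is the substantive ``deprolongation'' step, which first reconstructs $\Upsilon$ from the invariant conditions and then produces $M_{-1}$ as a leaf space. The implication $(1)\Rightarrow(2)$ is essentially contained in the discussion preceding the theorem: if $(M,\I)$ is an open subset of the prolongation of a quasi-parabolic Monge-Amp\`ere system $(M_{-1},\{\thetasu{\varnothing},\Upsilon_{-1}\})$, pull $\Upsilon_{-1}$ up to $\tilde M\subseteq\Gr_{n+1}(TM_{-1},\omegasu{\varnothing})$ to get $\Upsilon^{(1)}$, restrict to $M$, and pass to a parabolic coframing diagonalizing the symbol $(\partial F/\partial p_{ij})$. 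The minor-expansion identity $\pisu{I}\w\omegasu{J}=H_{I,J}\omegasu{\varnothing}+\tfrac12(\partial H_{I,J}/\partial p_{ij})(\thetasu{i}\w\omegasu{j}+\thetasu{j}\w\omegasu{i})+\cdots$ together with $F=0$ on $M$ and the diagonalization give $\Upsilon^{(1)}\equiv\thetasu{i}\w\omegasu{i}+\sum_{|I|=|J|>1}A_{I,J}\thetasu{I}\w\omegasu{J}\mod{\thetasu{\varnothing}}$; commuting $\d$ with the pullbacks and using $\d\Upsilon_{-1}\equiv0\mod{\thetasu{\varnothing},\d\thetasu{\varnothing}}$ gives the closure condition.

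For $(2)\Rightarrow(3)$, pull $\Upsilon$ back to $\gBun_0$ and differentiate, using the first-order structure equations of Section~\ref{sec:Equivalence setup} together with the expansions $\xi_i^0\equiv V_i^{0j0}\pisu{j0}+V_i^{0jk}\pisu{jk}$ and $\xi_i^j\equiv V_i^{jkl}\pisu{kl}\mod{\cartTheta}$. The only part of $\d\Upsilon$ not automatically lying in the algebraic ideal generated by $\thetasu{\varnothing}$, $\d\thetasu{\varnothing}$ and $\Upsilon$ comes from the torsion terms $\xi_i^a\w\thetasu{a}$ in $\d\thetasu{i}$, wedged against $\omegasu{i}$; the corresponding terms arising from $\d(A_{I,J}\thetasu{I}\w\omegasu{J})$ with $|I|>1$ are, after using the Bianchi symmetry $A_{I,J}=A_{J,I}$ and the structure equation for $\d\thetasu{a}$, absorbed into multiples of $\Upsilon$ and $\d\thetasu{\varnothing}$. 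Reading off the coefficients of the $\pisu{bc}$ first forces $V_i^{0j0}=V_i^{0jk}=0$, after which the residual obstruction is precisely the image of $(V_i^{jkl})$ under the $\SO(n)$-equivariant symmetrization--contraction $\Sym^2_0(\Sym^2_0\R^n)\to\Sym_0^3\R^n\otimes\R^n$; the representation theory of Section~\ref{sec:MA invariants} identifies this map and organizes the bookkeeping component by component, yielding $V_i^{jkl}\in\bianchi_2'$.

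For $(3)\Rightarrow(1)$, first reverse the previous step to construct $\Upsilon$: starting from $\thetasu{i}\w\omegasu{i}$ one looks for correction terms $A_{I,J}\thetasu{I}\w\omegasu{J}$ ($|I|=|J|>1$) making $\d\Upsilon$ lie in the ideal generated by $\thetasu{\varnothing}$, $\d\thetasu{\varnothing}$ and $\Upsilon$; the conditions $V_i^{0j0}=V_i^{0jk}=0$ and $V_i^{jkl}\in\bianchi_2'$ say exactly that the leading (quadratic-in-$\theta$) obstruction vanishes, and the higher-order corrections are then pinned down recursively in $|I|$ by linear algebra over $\cMinfty(M)$ with no further obstruction, because in a parabolic coframing the symbol is the identity at every point. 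Next let $\mathcal D=\{\thetasu{\varnothing},\d\thetasu{\varnothing},\Upsilon\}_{\mathrm{alg}}$, which is differentially closed by construction, and compute its Cauchy characteristic system: using the filtration \eqref{eq:coframing filtration} and the vanishing of the invariants one checks that $\mathcal D$ retracts onto a subbundle of corank $2n+3$, so the leaf space $M_{-1}$ of its Cauchy characteristics is locally a $(2n+3)$-manifold to which $\thetasu{\varnothing}$, $\Upsilon$ and $\omegu^0$ descend. Since $\omegu^0\w\omegasu{J}=0$ for every $J\subseteq\{1,\dots,n\}$ (as $\omegasu{J}$ always contains $\omegu^0$), one has $\omegu^0\w\Upsilon\equiv0\mod{\thetasu{\varnothing}}$, so the descended system $(M_{-1},\{\overline{\thetasu{\varnothing}},\overline\Upsilon\})$ is a quasi-parabolic Monge-Amp\`ere system; a dimension count together with the reconstruction of the fibre coordinates via $\thetasu{a}=\pisu{a}-p_{ab}\omegu^b$ (i.e.\ running the Grassmannian construction backwards) then exhibits $M$ as an open subset of its prolongation.

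The main obstacle is the reconstruction in $(3)\Rightarrow(1)$, in two places. First, one must verify that the recursive determination of the higher correction terms $A_{I,J}$ terminates with no obstruction beyond the secondary invariants; the delicate point is controlling the curvature terms $\d\beta+\beta\w\beta$ (cf.\ the remark following \eqref{eq: total xi_i^j variation}), which may require prolonging the $G$-structure one step. Second, one must show the Cauchy characteristic system of $\mathcal D$ has exactly the expected rank $(n+1)(n+2)/2-1$ and that its leaves are swept out by the $\pisu{ab}$-directions; the cleanest route is probably to bypass the abstract count and instead build the deprolongation coordinates directly from a maximally adapted parabolic coframing, applying the Frobenius theorem to an explicit distribution, much as in the proof of Theorem~\ref{thm:integrable characteristics}.
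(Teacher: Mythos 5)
Your architecture matches the paper's: (1)$\Rightarrow$(2) from the prolongation construction preceding the theorem, (2)$\Leftrightarrow$(3) by differentiating $\Upsilon$ against the structure equations, and (3)$\Rightarrow$(1) by building $\Upsilon$, passing to the leaf space of the Cauchy/Cartan system of $\{\thetasu{\varnothing},\Upsilon\}$ (which the closure condition makes equal to the Frobenius ideal $\cartTheta$, of rank $2n+3$), and then embedding $M$ into the prolongation of the quotient via $x\mapsto q'(\{\thetasu{\varnothing},\thetasu{a}\}^\perp_x)$. So the route is the same; the problem is the justification of the central step.

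The genuine gap is in the recursive construction of $\Upsilon$ from condition (3). It is not ``linear algebra over $\cMinfty(M)$ with no further obstruction because the symbol is the identity,'' and the difficulty is not the curvature terms $\d\beta+\beta\w\beta$ (those enter only in identifying the secondary invariants in Section \ref{sec:MA invariants}, not here). The actual mechanism, which your proposal never supplies, is representation-theoretic: at stage $r$ one writes $\d\Upsilon_r \equiv V^{ab}_{I,J}\,\pisu{ab}\w\thetasu{I}\w\omegasu{J}$ modulo the ideal; the vanishing of the primary invariants gives $\d\omegu^0\equiv\beta_0^0\w\omegu^0$ modulo $\thetanotu,\Lambda^2\cartTheta$, and differentiating $\omegu^0\w\Upsilon_r=0$ forces the obstruction to be divisible by $\omegu^0$, killing the $V^{00}_{I,J}$ and $V^{i0}_{I,J}$ components; then $\d^2\Upsilon_r=0$ says $(V^{ij}_{I,J})$ lies in $\ker g_{r+1}$, and the exactness of the sequence $0\to\bianchi_{r+1}\to\Sym_0^2 W\otimes\bianchi_{r}\to\Lambda^2(\Sym_0^2 W)\otimes\bianchi_{r-1}$ for $r+1\ge 3$ (proved in the paper via Schur's lemma and Littlewood--Richardson) produces a unique $A_{I,J}\in\bianchi_{r+1}$ absorbing it. Without this exactness lemma the induction has no reason to close, and the fact that the sequence fails to be exact exactly at level $2$, with homology $\Sym_0^4 W$, is precisely why condition (3) must impose $V_i^{jkl}\in\bianchi_2'$ and nothing more at higher levels. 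Relatedly, your quasi-parabolicity claim for the quotient needs more than $\omegu^0\w\Upsilon\equiv 0$: one must show $\omegu^0$ actually descends, which the paper gets from the vanishing primary invariants, using a triple application of Cartan's lemma that requires $n\ge 3$ (the theorem is proved there only for $n\ge 3$, with the cases $n+1=2,3$ cited to Bryant--Griffiths and Clelland); your proposal never invokes this hypothesis.
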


Before proving the Theorem, it will be useful to establish some algebra.
Let $W = \R^n$ be the standard representation of $\SL(n)$. For each $r \ge 1$, the inclusion
\[ \Lambda^r W \xrightarrow{\qquad} W \otimes \Lambda^{r-1} W \]
induces an equivariant map
\begin{equation}\label{eq: pre Bianchi sequence}
    \Lambda^{r} W \otimes \Lambda^{r} W \xrightarrow{\qquad} \Sym^2 W \otimes \Lambda^{r-1} W \otimes \Lambda^{r-1} W .
\end{equation}

Denote by $\bianchi_r$ the irreducible $\SL(n)$-representation
\[ \ker(\Lambda^{r} W \otimes \Lambda^{r} W \xrightarrow{\qquad} \Lambda^{r+1} W \otimes \Lambda^{r-1} W) . \]
In abstract index notation, an element of $\bianchi_r$ is characterized by a collection of numbers $A_{i_1 j_1 \ldots i_r j_r}$ anti-symmetric under permutation of the $i_1 \ldots i_r$ indices and the $j_1 \ldots i_r$ indices and whose anti-symmetric sum under any $r+1$ indices vanishes.

By restriction of Equation \ref{eq: pre Bianchi sequence}, define the map
\[ \bianchi_r \xrightarrow{\qquad} \Sym^2 W \otimes \bianchi_{r-1} . \]

\begin{lemma}
    For each $r \ge 3$, the sequence
    \[ 0 \xrightarrow{\qquad} \bianchi_r \xrightarrow{\qquad} \Sym^2 W \otimes \bianchi_{r-1} \xrightarrow{\qquad} \Lambda^2 (\Sym^2 W) \otimes \bianchi_{r-2} \]
    is exact.
\end{lemma}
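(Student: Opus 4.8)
The plan is to treat this as a statement in the representation theory of $\GL(W)$: the three terms and the two maps are natural in $W$, so nothing is lost by complexifying $W$ and taking $\dim W$ as large as convenient (the homology functors are then polynomial functors of a fixed degree, so their vanishing for large $\dim W$ forces their vanishing in general). The key input is the standard identification of $\bianchi_r$ with the Schur functor $\mathbb{S}_{(2^r)}W$ for the partition with two columns of height $r$ --- equivalently, $\bianchi_r$ is the ``Cartan component'' of $\Lambda^r W\otimes\Lambda^r W$; in particular $\bianchi_r$ is \emph{irreducible}. The first map $\bianchi_r\to\Sym^2 W\otimes\bianchi_{r-1}$ is the ``extract one symmetric pair'' map built from the comultiplication $\Lambda^r W\hookrightarrow W\otimes\Lambda^{r-1}W$ in each slot followed by projection onto $\Sym^2 W$, and the second map is that same extraction applied to the $\bianchi_{r-1}$ factor, followed by antisymmetrization of the two resulting $\Sym^2 W$ factors.

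First I would dispose of the two formal points. The sequence is a complex: extracting two symmetric pairs from an element of $\bianchi_r\subset\Lambda^r W\otimes\Lambda^r W$ produces a tensor symmetric under interchanging the two pairs (this follows from the antisymmetry within each $\Lambda^r W$ and does not even use the Bianchi identity), so the composite lands in $\Sym^2(\Sym^2 W)\otimes\bianchi_{r-2}$ and dies under the projection to $\Lambda^2(\Sym^2 W)$. And the first map is visibly nonzero on the decomposable element $e_1\wedge\cdots\wedge e_r\otimes e_1\wedge\cdots\wedge e_r$, so, its source being irreducible, it is injective. It remains only to prove exactness in the middle.

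Next I would decompose the middle term. Since it is $\Sym^2 W$ tensored with $\mathbb{S}_{(2^{r-1})}W$, Pieri's rule applies and the decomposition is multiplicity-free,
\[ \Sym^2 W\otimes\bianchi_{r-1}\;\cong\;\mathbb{S}_{(2^r)}W\;\oplus\;\mathbb{S}_{(3,2^{r-2},1)}W\;\oplus\;\mathbb{S}_{(4,2^{r-2})}W . \]
The image of the first map is a submodule isomorphic to the irreducible $\mathbb{S}_{(2^r)}W$, hence by multiplicity-freeness it is exactly the first summand, and the complex property forces the second map to annihilate it. Thus exactness in the middle is \emph{equivalent} to the second map being injective on $\mathbb{S}_{(3,2^{r-2},1)}W\oplus\mathbb{S}_{(4,2^{r-2})}W$; by Schur's lemma each of these irreducibles maps either isomorphically onto its isotypic component in the target or to zero. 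A Littlewood--Richardson computation confirms that both $\mathbb{S}_{(3,2^{r-2},1)}W$ and $\mathbb{S}_{(4,2^{r-2})}W$ occur, each with multiplicity one, in $\Lambda^2(\Sym^2 W)\otimes\bianchi_{r-2}=\mathbb{S}_{(3,1)}W\otimes\mathbb{S}_{(2^{r-2})}W$ --- this is exactly where the hypothesis $r\ge 3$ is needed, so that $\bianchi_{r-2}$ is defined; and, as a consistency check, $\mathbb{S}_{(2^r)}W$ does \emph{not} occur in this target. So each of the two maps in question \emph{could} be an isomorphism, and what is left is to see that it really is, i.e., that the scalar by which the second map acts on each of these two summands is nonzero.

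That last point is the main obstacle, and it resists the obvious highest-weight argument: for $\mathbb{S}_{(4,2^{r-2})}W$ the natural candidate vector is essentially $e_1^2\otimes v$ with $v$ a highest-weight vector of $\bianchi_{r-1}$, and this is killed by the second map because extracting $e_1^2$ twice and antisymmetrizing gives $e_1^2\wedge e_1^2=0$, so one must descend to subleading weight vectors where the cancellations no longer trivialize everything. I would settle it by an explicit calculation in a fixed small $W$ (taking $\dim W=r$ already makes all three Schur functors nonzero, and naturality reduces the general statement to this case): choose a convenient weight vector in each of the two summands, apply the two-step extraction-and-antisymmetrization, and check the output is nonzero. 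An alternative route that avoids the representation theory altogether is to prove the middle exactness by a direct tensor computation in the spirit of the generalized Cartan's lemma used elsewhere in the paper: given $x$ annihilated by the second map, write down an explicit alternating-sum formula for a candidate preimage $A\in\bianchi_r$, verify that $A$ obeys the required antisymmetries and Bianchi identity, and confirm that the first map sends $A$ to $x$ --- elementary but bookkeeping-heavy.
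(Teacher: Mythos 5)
Your reduction is set up correctly --- identifying $\bianchi_r$ with the Schur functor $\mathbb{S}_{(2^r)}W$, the Pieri decomposition $\Sym^2 W\otimes\bianchi_{r-1}\cong \mathbb{S}_{(2^r)}W\oplus\mathbb{S}_{(3,2^{r-2},1)}W\oplus\mathbb{S}_{(4,2^{r-2})}W$, and the observation that exactness in the middle is equivalent to the second map being nonzero on each of the two non-$\bianchi_r$ summands are all sound, as are the complex property and the injectivity of the first map. But the decisive step is exactly the one you leave as a plan: multiplicity-one occurrence of $\mathbb{S}_{(3,2^{r-2},1)}W$ and $\mathbb{S}_{(4,2^{r-2})}W$ in $\Lambda^2(\Sym^2 W)\otimes\bianchi_{r-2}$ only shows the map \emph{could} be injective on them; Schur's lemma equally allows it to be zero on either summand, in which case the kernel would strictly exceed the image and the lemma would be false. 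You acknowledge this (``the main obstacle''), note correctly that the naive highest-weight vector for $\mathbb{S}_{(4,2^{r-2})}W$ is killed for trivial reasons, and then defer the non-vanishing check to an unexecuted weight-vector computation. As written, the proposal is therefore an incomplete proof: the part that actually needs an argument is missing.

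For comparison, the paper avoids the isotypic bookkeeping entirely and proves middle exactness by a short index computation, essentially your second suggested alternative. If $A\in\Sym^2 W\otimes\bianchi_{r-1}$ lies in the kernel of the second map, that membership says $A$ is symmetric under exchanging the distinguished symmetric pair with any extracted pair; combining this with the antisymmetries and the Bianchi identity already present in the $\bianchi_{r-1}$ factor, a few transpositions force $A$ to be antisymmetric in the appropriate index groups, so $A\in\Lambda^r W\otimes\Lambda^r W$. A single Littlewood--Richardson fact, $(\Lambda^r W\otimes\Lambda^r W)\cap(\Sym^2 W\otimes\bianchi_{r-1})\cong\bianchi_r$, then identifies the kernel with the image. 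So the direct route is considerably less ``bookkeeping-heavy'' than you feared, and it is what actually closes the gap; if you prefer to stay with your multiplicity argument, you must still exhibit, for each of the two summands, an explicit vector whose image under extraction-plus-antisymmetrization is nonzero.
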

\begin{proof}
    The first map is not zero, so by Schur's Lemma must be injective. Suppose that $A = A_{k_1l_1 k_2l_2 k_3l_3 \ldots k_rl_r} \in \Sym^2 W \otimes \bianchi_{r-1}$, so that $A$ is symmetric in $k_1l_1$, anti-symmetric in $k_2 \ldots k_r$ and $l_2 \ldots l_r$, and satisfies the Bianchi identity in $k_2 l_2 \ldots k_rl_r$. If furthermore $A$ is in the kernel, then
    \[ A_{k_1l_1 k_2l_2 k_3l_3 \ldots k_rl_r} = A_{k_2l_2 k_1l_1 k_3l_3 \ldots k_rl_r} = A_{k_3l_3 k_2l_2 k_1l_1 \ldots k_rl_r} . \]
    But then
    \[ A_{k_1l_1 k_2l_2 k_3l_3 \ldots k_rl_r} = A_{k_3l_3 k_2l_2 k_1l_1 \ldots k_rl_r}  = -A_{k_3l_3 k_1l_2 k_2l_1 \ldots k_rl_r} = -A_{k_2l_1 k_1l_2 k_3l_3 \ldots k_rl_r} \]
    and various permutations shows that $A \in \Lambda^{r} W \otimes \Lambda^{r} W$.

    Finally, it follows quickly from the Littlewood-Richardson rule that
    \[ (\Lambda^{r} W \otimes \Lambda^{r} W) \cap (\Sym^2 W \otimes \bianchi_{r-1}) \cong \bianchi_r , \]
    so the kernel does in fact equal $\bianchi_r$.
\end{proof}

If we branch $W$ along $\SO(n) \subset \SL(n)$, then $\Sym^2 W = \Sym^2_0 W \oplus \R$ and $\Lambda^2 (\Sym^2 W) \cong \Lambda^2(\Sym^2_0 W) \oplus \Sym^2_0 W$. The exact sequence of the Lemma splits accordingly, so we have the following corollary.
\begin{corollary}
    For each $r \ge 3$, the sequence
    \begin{equation}\label{eq:Bianchi sequence}
        0 \xrightarrow{\qquad} \bianchi_r \xrightarrow{\quad f_r \quad} \Sym_0^2 W \otimes \bianchi_{r-1} \xrightarrow{\quad g_r \quad} \Lambda^2 (\Sym_0^2 W) \otimes \bianchi_{r-2}
    \end{equation}
    is exact.
\end{corollary}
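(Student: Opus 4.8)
The plan is to deduce the Corollary from the Lemma by restricting to $\SO(n)$ and bookkeeping the trace/traceless splitting; ``splits accordingly'' will be made into an induction on $r$.

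First restrict the exact sequence of the Lemma to $\SO(n)$, where it remains exact, and substitute $\Sym^2 W = \Sym^2_0 W \oplus \R g$ (with $g$ the metric element) together with $\Lambda^2(\Sym^2 W) = \Lambda^2(\Sym^2_0 W) \oplus (g \w \Sym^2_0 W)$. The middle term becomes $(\Sym^2_0 W \otimes \bianchi_{r-1}) \oplus \bianchi_{r-1}$ and the right term $(\Lambda^2(\Sym^2_0 W)\otimes \bianchi_{r-2}) \oplus (\Sym^2_0 W \otimes \bianchi_{r-2})$. Two observations organize everything. First, since $g \w g = 0$, the map out of the trace summand $\bianchi_{r-1}$ of the middle lands entirely in the $\Sym^2_0 W \otimes \bianchi_{r-2}$ factor of the right term. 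Second, unwinding the definition of the maps from \eqref{eq: pre Bianchi sequence}, this map is precisely the Corollary's own map $f_{r-1}$, one level down. Hence the trace components form a subcomplex of the Lemma's complex which is a degree shift of the Corollary's sequence for $r-1$, and the traceless components $0 \to \bianchi_r \xrightarrow{f_r} \Sym^2_0 W \otimes \bianchi_{r-1} \xrightarrow{g_r} \Lambda^2(\Sym^2_0 W)\otimes \bianchi_{r-2}$ form the quotient complex.

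I would then pass to the long exact cohomology sequence of this short exact sequence of complexes and induct on $r$. Exactness in the Lemma kills $H^0$ and $H^1$ of the total complex; the inductive hypothesis kills $H^0$ and $H^1$ of the subcomplex (the Corollary at level $r-1$); and the long exact sequence then pinches $H^0$ and $H^1$ of the quotient, i.e.\ the Corollary at level $r$, to zero. Injectivity of $f_r$ propagates upward this way from the single hands-on check that $f_2$ is injective, which holds because the copy of $\bianchi_2$ realized inside $\Sym^2 W \otimes \bianchi_1$ in the Lemma's proof meets the pure-trace subspace $\R g \otimes \bianchi_1$ only in $0$, forced by the extra antisymmetry appearing there.

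The main obstacle is the base case $r = 3$. The Lemma is available only for $r \ge 3$, and the $r = 2$ analogue of the Corollary is genuinely false: the kernel of $g_2$ is strictly larger than $f_2(\bianchi_2)$, so the induction cannot be anchored at $r = 2$ and $r = 3$ must be handled separately. I expect the cleanest route is to first prove the natural one-step strengthening of the Lemma, namely that the continued complex $\Lambda^\bullet(\Sym^2 W)\otimes \bianchi_{r-\bullet}$ is exact at one more spot than the Lemma asserts, by carrying the permutation-and-Bianchi bookkeeping of the Lemma's proof through an additional degree; feeding this into the long exact sequence then settles $r = 3$, after which the induction above takes over. Failing that, one can pin down $\ker g_3$ directly as an $\SO(n)$-subrepresentation and match dimensions against $\bianchi_3$ by a Littlewood--Richardson count, in the spirit of the last paragraph of the Lemma's proof.
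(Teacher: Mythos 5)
Your trace/traceless reduction is the right starting point, and in fact it is essentially all the paper does: it branches $W$ from $\SL(n)$ to $\SO(n)$, decomposes $\Sym^2 W = \Sym^2_0 W \oplus \R$ and $\Lambda^2(\Sym^2 W) \cong \Lambda^2(\Sym^2_0 W)\oplus \Sym^2_0 W$, and reads the Corollary off from the Lemma at the same level $r$, with no induction. Your identification of the boundary map on the trace summand with $f_{r-1}$ (because $\delta\w\delta=0$, writing $\delta$ for the invariant element of $\Sym^2 W$) is correct, but the inductive step built on it has a genuine gap. The trace components of the Lemma's three-term complex form only the two-term subcomplex $0 \to \R\delta\otimes\bianchi_{r-1} \to (\delta\w\Sym^2_0 W)\otimes\bianchi_{r-2}$; the third term $\Lambda^2(\Sym^2_0 W)\otimes\bianchi_{r-3}$ of the level-$(r-1)$ sequence is simply not present in the Lemma's complex, so the subcomplex is a truncation, not ``a degree shift of the Corollary's sequence for $r-1$.'' Consequently its top cohomology is $\operatorname{coker} f_{r-1}$, which your inductive hypothesis does not kill: exactness at the middle of level $r-1$ says $\ker g_{r-1}=\operatorname{im} f_{r-1}$, not that $f_{r-1}$ is onto. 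The long exact sequence then yields $H^0(Q)\cong\ker f_{r-1}=0$ (so the injectivity half of your induction, anchored at $f_2$, is fine) but only an injection $H^1(Q)\hookrightarrow \operatorname{coker} f_{r-1}$, and does not pinch $H^1(Q)$ to zero. Concretely, the missing step is: given traceless $x$ with $g_r(x)=0$, one has $G_r(x)=\delta\w\beta$ with $\beta\in \Sym^2_0 W\otimes\bianchi_{r-2}$, and one must show $\beta\in\operatorname{im} f_{r-1}$ before the Lemma can be applied to $x-\delta\otimes A$.

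The repair is close to something you mention only in connection with the base case: enlarge the window by one more term of the natural longer complex $\Lambda^{\bullet}(\Sym^2 W)\otimes\bianchi_{r-\bullet}$ (this requires only checking it is a complex, $d^2=0$, not any extra exactness). Then $d^2=0$ forces $g_{r-1}(\beta)=0$, the trace subcomplex becomes $\delta\w(\text{the full level-}(r-1)\text{ sequence})$, and your long-exact-sequence induction closes for $r\ge 4$. That still leaves $r=3$, which you correctly flag but do not prove: since the level-$2$ sequence is not exact (homology $\Sym^4_0 W$), you must show the $\Sym^4_0 W$-component of the obstruction $\beta$ vanishes there, or carry out the strengthened-exactness or Littlewood--Richardson computation you only sketch. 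As written, then, the proposal is a plan whose inductive step fails at middle-exactness and whose anchor is unproven, and it is in any case a considerably heavier route than the paper's direct branching argument at fixed $r$.
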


It is also straightforward to check that the complex
\begin{equation}\label{eq:Bianchi sequence r=2}
    0 \xrightarrow{\qquad} \bianchi_2 \xrightarrow{\quad f_2 \quad} (\Sym_0^2 W \otimes \Sym^2_0 W)_0 \xrightarrow{\quad g_2 \quad} \Lambda^2 (\Sym_0^2 W)
\end{equation}
is not exact, but the first map is injective. Letting $\bianchi_2'$ denote the image of $\bianchi_2$, the homology at $(\Sym_0^2 W \otimes \Sym^2_0 W)_0$ is isomorphic to $\Sym_0^4 W$, the complement of $\bianchi_2'$ in $\Sym^2_0 (\Sym^2_0 W)$. This homology is directly related to the interpretation of the Monge-Amp\`ere invariants as integrability conditions obstructing a Monge-Amp\`ere deprolongation. Indeed, the secondary Monge-Amp\`ere invariant $(V_i^{jkl})$ takes values in $\Sym_0^2 (\Sym^2_0 W)$, and the following Proposition shows $M$ has a quasi-parabolic deprolongation only if $(V_i^{jkl})$ is zero in homology, that is, if $(V_i^{jkl}) \in \bianchi_2'$. It is useful to note that $\bianchi'_2$ is characterized by
\[ \bianchi'_2 = \ker\left(\Sym^2_0 (\Sym^2_0 W) \xrightarrow{\qquad} \Sym_0^3 W \otimes W\right) . \]
% \[ \begin{tikzcd}
%  0 \ar[r] & \bianchi_r \ar[r] & \Sym^2 W \otimes \bianchi_{r-1} \ar[r]\arrow[oplus]{d}{} & \Lambda^2 (\Sym^2 W) \otimes \bianchi_{r-2} \\
%  & & \R\otimes \bianchi_{r-1} \ar[r] & \Sym^2_0 W\otimes \bianchi_{r-2}
% \end{tikzcd} \]

Recall that the Cartan system of $\thetasu{\varnothing}$ is the Frobenius ideal $\cartTheta =  \{\thetasu{\varnothing},\thetasu{a},\omegu^a\}$.
\begin{proposition}\label{thm: (prop) MA iff MA invariants}
    For a parabolic system $(M,\I)$, the Monge-Amp\`ere invariants $V_i^{0j0} = V_i^{0jk} = 0$ and $V_i^{jkl} \in \bianchi_2'$ if and only if there exists a primitive $(n+1)$-form $\Upsilon$ on $M$,
    \[ \Upsilon \equiv \thetasu{i} \w \omegasu{i} + \sum_{\substack{I, J \subseteq \{0,\ldots,n\} \\ |I|=|J| > 1}} A_{I,J}\thetasu{I} \w \omegasu{J} \mod{\thetasu{\varnothing}} \]
    such that
    \[ \d\Upsilon \equiv 0 \mod{\thetasu{\varnothing}, \d\thetanotu, \Upsilon} . \]

    In case this equivalence holds, the form $\thetanotu\w\Upsilon$ is unique and is divisible by $\omegu^0$, so that
    \[ \omegu^0\w\Upsilon \equiv 0 \mod{\thetanotu} . \]
\end{proposition}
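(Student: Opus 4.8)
The plan is to carry out a Cartan-type computation on the reduced $G_{MA}$-structure $\gBun_{MA}$ supplied by Proposition~\ref{thm: (prop) MA invariants vanish if xi, sigma vanish mod}, using the Bianchi sequences~\eqref{eq:Bianchi sequence} and \eqref{eq:Bianchi sequence r=2} to carry the linear-algebraic bookkeeping.

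For the implication from the invariants to $\Upsilon$, I would first assume $V_i^{0j0} = V_i^{0jk} = 0$ and $V_i^{jkl} \in \bianchi_2'$ and pass to $\gBun_{MA}$, on which $\xi_i^0 \equiv \xi_i^j \equiv 0$, $\sigma^{00} \equiv \sigma^{i0} \equiv 0$ and $\sigma^{ij} \equiv V\pis{ij}$, all modulo $\cartTheta$. Then I would take as first approximation the semibasic $(n+1)$-form $\thetas{i}\w\omegas{i}$ on $\gBun_{MA}$ and seek corrections by purely spatial terms $\sum_{1 < |I| = |J|,\ I,J \subseteq \{1,\ldots,n\}} A_{I,J}\,\thetas{I}\w\omegas{J}$. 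The key computation is that of $\d(\thetas{i}\w\omegas{i})$ from the structure equations of Section~\ref{sec:Equivalence setup}: the term $\pis{ij}\w\omega^j\w\omegas{i}$ collapses to $\pis{ii}\w\omegasu{\varnothing}$, which is $\equiv 0 \mod{\cartTheta}$ by the parabolic symbol relation, while $\pis{i0}\w\omega^0\w\omegas{i}$ vanishes because $\omega^0$ already divides $\omegas{i}$; and since $\thetas{i}\w\omegas{i}$ is an invariant pairing up to a character, its exterior derivative is controlled purely by torsion. What then remains of $\d\Upsilon$ modulo $\{\thetanotu, \d\thetanotu, \Upsilon\}$ is a semibasic form assembled from $\xi_i^j \equiv V_i^{jkl}\pis{kl}$, from $\sigma^{ij}$, and from the covariant derivatives of the $A_{I,J}$. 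Exactness of~\eqref{eq:Bianchi sequence} lets one solve, level by level and uniquely, for the $A_{I,J}$ absorbing everything except the image of the torsion in the homology $\Sym_0^4 \R^n$ of~\eqref{eq:Bianchi sequence r=2}; that residue is exactly the projection of $(V_i^{jkl})$ to $\Sym_0^4 \R^n$, which is zero precisely because $V_i^{jkl} \in \bianchi_2'$. The form $\Upsilon$ so produced is semibasic and relatively $G_{MA}$-invariant (it transforms by a single character), so after a choice of scale it descends to $M$ with the stated normal form, which one renders primitive by the usual Lefschetz shift, and it satisfies $\d\Upsilon \equiv 0 \mod{\thetanotu, \d\thetanotu, \Upsilon}$.

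For the converse, I would work in an arbitrary $0$-adapted coframing and expand the closure condition $\d\Upsilon \equiv 0 \mod{\thetanotu, \d\thetanotu, \Upsilon}$ by $\SO(n)$-irreducible weight. Because $\thetas{i}\w\omegas{i}$ is an invariant pairing (up to a character), $\d\Upsilon$ carries the torsion term $-\xi_i^a\w\thetas{a}\w\omegas{i}$ modulo connection terms, whose surviving part is $-\xi_i^0\w\thetas{0}\w\omegas{i}$ with $\xi_i^0 \equiv V_i^{0j0}\pis{j0} + V_i^{0jk}\pis{jk} \mod{\cartTheta}$. A weight-by-weight analysis of which components can be absorbed by $\d\thetanotu$ and by $\Upsilon$ then forces, successively, that $\Upsilon$ be divisible by $\omega^0$ modulo $\thetanotu$, that $V_i^{0j0} = V_i^{0jk} = 0$, and then that the $\Sym_0^4 \R^n$-component of $(V_i^{jkl})$ vanish, i.e.\ $V_i^{jkl} \in \bianchi_2'$. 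This same analysis yields the last assertions: the corrections $A_{I,J}$ are pinned down modulo $\thetanotu$ by the unique solvability of the recursion (injectivity of the maps $f_r$ in~\eqref{eq:Bianchi sequence} and~\eqref{eq:Bianchi sequence r=2}), so $\thetanotu\w\Upsilon$ does not depend on the choice of $\Upsilon$; and since the canonical representative involves, modulo $\thetanotu$, only spatial index sets $J \subseteq \{1,\ldots,n\}$, the factor $\omega^0$ divides each $\omegas{J}$ occurring, whence $\omega^0\w\Upsilon \equiv 0 \mod{\thetanotu}$.

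The step I expect to be the main obstacle is the representation-theoretic bookkeeping inside $\d\Upsilon$: decomposing the torsion together with the derivatives $\d A_{I,J}$ into $\SO(n)$-irreducibles, tracking them through the maps $f_r, g_r$ of the Bianchi sequences, and confirming that the only genuine obstruction to $\d\Upsilon \equiv 0 \mod{\thetanotu, \d\thetanotu, \Upsilon}$ lives in the homology $\Sym_0^4 \R^n$ and is hit exactly by the $\Sym_0^4 \R^n$-component of the secondary Monge-Amp\`ere invariant.
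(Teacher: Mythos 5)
Your proposal follows essentially the paper's own route: take $\thetasu{i}\w\omegasu{i}$ as the seed, compute its exterior derivative, absorb the leading obstruction by a $\bianchi_2$-correction (possible exactly because $V_i^{jkl}\in\bianchi_2'$), and then continue inductively using exactness of the sequence \eqref{eq:Bianchi sequence}; the converse likewise matches the paper, recovering $V_i^{0j0}=V_i^{0jk}=0$ by differentiating the divisibility relation $\omegu^0\w\Upsilon\equiv 0$ and noting that the first corrective step is obstructed by the $\Sym_0^4\R^n$-component of $(V_i^{jkl})$. The uniqueness and $\omegu^0$-divisibility claims are also handled as in the paper, via injectivity of the maps $f_r$ and the purely spatial index sets of the canonical representative.

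One point in your setup needs correcting. You begin by passing to the $G_{MA}$-structure of Proposition \ref{thm: (prop) MA invariants vanish if xi, sigma vanish mod}, on which $\xi_i^0\equiv\xi_i^j\equiv 0$ and $\sigma^{ij}\equiv V\pis{ij}$ modulo $\cartTheta$. That reduction is available only when the secondary Monge-Amp\`ere invariants vanish identically, which is strictly stronger than the hypothesis $V_i^{jkl}\in\bianchi_2'$; and if it were available, the $\bianchi_2$-correction you then perform would be vacuous (you in fact immediately use $\xi_i^j\equiv V_i^{jkl}\pis{kl}$ with $V\neq 0$, contradicting the reduction you invoked). The correct setting is the $G_1$-reduction coming from the vanishing of the primary invariants alone, where $\xi_i^0\equiv 0$ and (by the triple Cartan-lemma argument using $n\ge 3$) $\sigma^{0a}\equiv 0$ modulo $\cartTheta$, while $\xi_i^j\equiv V_i^{jkl}\pis{kl}$ is genuinely nonzero. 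These consequences of primary vanishing are not incidental: they give $\d\omegu^0\equiv\beta_0^0\w\omegu^0$ modulo $\thetanotu$ and $\Lambda^2\cartTheta$, which is what forces the higher obstructions $V^{ab}_{I,J}$ appearing at each inductive step to satisfy $V^{00}_{I,J}=V^{i0}_{I,J}=0$, so that the purely spatial sequence \eqref{eq:Bianchi sequence} applies. This divisibility argument is the concrete content hiding behind the ``representation-theoretic bookkeeping'' you flag as the main obstacle, and it should be made explicit rather than attributed to the unavailable $G_{MA}$ normalization.
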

\begin{proof}
    Suppose that $V_i^{0j0} = V_i^{0jk} = 0$ and $V_i^{jkl} \in \bianchi_2'$. Then $\Upsilon$ can be constructed as follows. For
    \[ \Upsilon_1 \equiv \thetasu{i}\w\omegasu{i} \mod{\thetanotu} , \]
    we have
    \begin{align*}
        \d\Upsilon_1 & \equiv -2\xi_i^j \thetasu{j}\w\omegasu{i} - \xi_i^0\w(\thetasu{0}\w\omegasu{i} + \thetasu{i}\w\omegasu{0}) \mod{\thetanotu, \d\thetanotu, \Upsilon_1, \Lambda^2 \I} \\
        & \equiv -2V_i^{jkl}\pisu{kl}\w\thetasu{j}\w\omegasu{i} .
    \end{align*}
    The second equivalence relies on the fact that every element of $\Lambda^{n+2} \cartTheta$ is divisible by $\d\thetanotu$.
    Since $V_i^{jkl} \in \bianchi_2'$, there is a unique choice of functions $A_{ij,kl} \in \bianchi_2$ so that $f_2(A_{ij,kl}) = 2V_i^{jkl}$. Setting
    \[ \Upsilon_2 = \Upsilon_1 + \sum_{i,j,k,l \subseteq \{1,\ldots,n\}} A_{ij,kl}\thetasu{i}\w\thetasu{j}\w\omegasu{kl} , \]
    we have, by construction,
    \[ \d\Upsilon_2 \equiv 0 \mod{\thetanotu, \d\thetanotu, \Upsilon_2, \Lambda^2 \I} . \]

    Now, $\Upsilon_2$ is in $\Lambda^{n+1} \cartTheta$, so $\d\Upsilon_2$ is as well. It follows that there are functions $V^{ab}_{I,J}$ so that $\d\Upsilon_2$ has leading part
    \[ \d\Upsilon_2 \equiv \sum_{\substack{I, J \subseteq \{0,\ldots,n\} \\ |I|=|J| = 2}} V_{I,J}^{ab}\pisu{ab}\w\thetasu{I}\w\omegasu{J} \mod{\thetanotu, \Upsilon_2, \Lambda^3 \I} . \]
    Furthermore, since the primary Monge-Amp\`ere invariants vanish, there is a 1-form $\beta_0^0$ on $M$ so that
    \[ \d\omegu^0 \equiv \beta_0^0\w\omegu^0 \mod{\thetanotu, \Lambda^2 \cartTheta} . \]
    Thus from
    \[ \omegu^0\w\Upsilon_2 = 0 , \]
    it follows that
    \[ 0 = d(\omegu^0\w\Upsilon_2) \equiv -\omegu^0\w(\cancel{\beta_0^0\w\Upsilon_2} + \d\Upsilon_2) \mod{\thetanotu, \Lambda^3 \I, \Lambda^{n+3}\cartTheta} . \]
    As a consequence, the leading part of $\d\Upsilon_2$ is divisible by $\omegu^0$, and the functions $V_{I,J}^{ab}$ may be assumed as valued in $(\Sym^2_0 W \oplus W \oplus \R) \otimes \bianchi_2$, so that
    \[ \d\Upsilon_2 \equiv \sum_{\substack{I, J \subseteq \{1,\ldots,n\} \\ |I|=|J| = 2}} V_{I,J}^{ab}\pisu{ab}\w\thetasu{I}\w\omegasu{J} \mod{\thetanotu, \d \thetanotu, \Upsilon_2, \Lambda^3 \I} . \]
    But from $\d^2(\Upsilon_2) = 0$ it follows that $V_{I,J}^{00} = V_{I,J}^{i0} = 0$ and $g_3(V_{I,J}^{ij}) = 0$, and thus by the exactness of Sequence \ref{eq:Bianchi sequence}, there are functions $(A_{I,J}) \in \bianchi_3$ so that $V_{I,J}^{ij} = f_3(A_{I,J})$. Thus we may choose uniquely
    \[ \Upsilon_3 = \Upsilon_2 + \sum_{\substack{I, J \subseteq \{1,\ldots,n\} \\ |I|=|J| = 3}} A_{I,J}\thetasu{I} \w \omegasu{J} \]
    so that
    \[ \d\Upsilon_3 \equiv 0 \mod{\thetanotu, \d \thetanotu, \Upsilon_3, \Lambda^3 \I} . \]
    Continuing inductively, one constructs $\Upsilon$.
    Note that in the construction, each step is uniquely determined, and at each step
    \[ \omegu^0\w\Upsilon_r \equiv 0 \mod{\thetanotu} . \]

    Conversely, suppose given
    \[ \Upsilon \equiv \thetasu{i} \w \omegasu{i} + \sum_{\substack{I, J \subseteq \{0,\ldots,n\} \\ |I|=|J| \ge 2}} A_{I,J}\thetasu{I} \w \omegasu{J} \mod{\thetasu{\varnothing}} \]
    such that
    \[ \d\Upsilon \equiv 0 \mod{\thetasu{\varnothing}, \d\thetanotu, \Upsilon,} . \]
    Adding a multiple of $\thetanotu$ to $\Upsilon$ if necessary, assume that
    \[ \d\Upsilon \equiv 0 \mod{\thetasu{\varnothing}, \Upsilon} . \]
    The leading term of $\Upsilon$ is divisible by $\omegu^0$,
    \[ \omegu^0\w\Upsilon \equiv 0 \mod{\thetasu{\varnothing}, \Lambda^2 \I} , \]
    so
    \[ 0 = \d(\omegu^0\w\Upsilon) \equiv (\d \omegu^0) \w \Upsilon \equiv -V_i^{0ab}\pisu{ab}\w\thetasu{i}\w\omegasu{\varnothing} \mod{\thetasu{\varnothing}, \Lambda^2 \I} , \]
    and thus $V_i^{0j0} = V_i^{0jk} = 0$.

    If $V_i^{0j0} = V_i^{0jk} = 0$ but $V_i^{jkl} \not\in \bianchi_2'$, then there is no choice of $\Upsilon_2$ in the construction given, so $\Upsilon$ cannot exist.
\end{proof}

\begin{proof}[Proof of Theorem \ref{thm:deprolongation iff invariants vanish iff deprolongation form}]
    This Theorem has been proved for systems in $n+1 = 2$ and $n+1=3$ variables by Bryant \& Griffiths \cite{Characteristic_Cohomology_II} and Clelland \cite{Clelland_Thesis} respectively, so I will assume that $n\ge 3$.

    That (1) implies (2) was shown in the construction of the prolongation to a quasi-parabolic Monge-Amp\`ere system. Proposition \ref{thm: (prop) MA iff MA invariants} shows that (2) and (3) are equivalent. Suppose either holds.
    Define on $M$ the exterior ideal
    \[ \K = \{ \thetasu{\varnothing},\ \Upsilon \} . \]
    Condition (2) is exactly the statement that the Cartan system of $\K$ is the Frobenius ideal
    \[ \cartTheta = \{ \thetanotu, \thetasu{a}, \omegu^a \} . \]

    Consider a small enough neighborhood in $M$ so that the space $M_{-1}$ of $\cartTheta$-leaves is a ($2n+3$ dimensional) manifold, which will be the space of the deprolongation.
    Let
    \[ q\colon M \xrightarrow{\quad} M_{-1} \]
    denote the submersion of $M$ onto its leaf space.
    There exists a 1-form $\thetasu{\varnothing}$ and an $(n+1)$-form $\Upsilon$ on $M_{-1}$ so that the ideal
    \[ \I_{-1} = \{\thetasu{\varnothing}, \underline\Upsilon\} \]
    pulls back to generate $\K$.

    On $M$ it is immediate that $\thetas{\varnothing}\w\d\thetas{\varnothing}^{n+1} \neq 0$ and $\Upsilon \not\equiv 0 \mod{\thetanotu, \d\thetanotu}$. But the pullback by $q$ is injective, so the same must hold on $M_{-1}$, and thus $(M_{-1}, \I_{-1})$ is a Monge-Amp\`ere system.

    The primary Monge-Amp\`ere invariants of $M$ vanish and there are $MA$-adapted coframings of $M$ for which (using $n \ge 3$)
    \[ \d\omegu^0 \equiv -\sigma^{0a}\w\thetasu{a} + \xi_i^0\w\omegu^i \equiv 0 \mod{\thetanotu, \omegu^0, \Lambda^2 \cartTheta} . \]
    As such, $\cartTheta$ is the Cartan system of the ideal $\{\thetanotu, \omegu^0\}$, and there is a corresponding rank 2 Pfaffian system on $M_{-1}$ that exhibits $M_{-1}$ as a quasi-parabolic Monge-Amp\`ere system.

    $M$ embeds into the prolongation of $M_{-1}$ as follows. For each point $x \in M$, define the codimension $n+2$ plane $\tilde{E}_x \subset T_x M$ by
    \[ \tilde{E}_x = \{ \thetanotu, \thetasu{a} \}^\perp . \]
    Since $q$ is a submersion, the plane $E_x = q'(\tilde{E}_x) \subset T_{q(x)} M_{-1}$ has dimension $n+1$. Now, $\tilde{E}_x$ is an integral element for $\K$, so $E_x$ is an integral element for $\I_{-1}$. This defines a map $\varphi$ from $M$ to the prolongation $\tilde{M}$ of $M_{-1}$ by $\varphi \colon x \mapsto E_x$. The manifolds $M$ and $\tilde{M}$ have the same dimension, and it follows from the structure equations of $M$ that $\varphi$ is a local diffeomorphism. By construction, $\varphi$ is an EDS equivalence.
\end{proof}

\subsection{Linear Type Systems and Their Symbol}
    In contrast to the general case describe above, there are Monge-Amp\`ere systems whose symbol is well defined independent of choice of $2$-jet of solution.
\begin{definition}
  A Monge-Amp\`ere system $(M_{-1},\I_{-1})$ in $n+1$ variables is of \emph{linear type} if it has an independence condition $J$, locally spanned by $\thetasu{\varnothing}$ and 1-forms $\omegu^0,\ldots,\omegu^n$, so that
  \begin{enumerate}
    \item $J$ is Lagrangian with respect to $\d\thetasu{\varnothing}$:
    \[ \d\thetasu{\varnothing} \equiv 0 \mod{J} . \]
    \item For any 1-form $\underline{\alpha} \in J$,
    \[ \underline{\alpha}\w\Upsilon \equiv 0 \mod{\thetasu{\varnothing},\ \omegu^0\w\ldots\w\omegu^n } . \]
  \end{enumerate}
  $J$ is called a \emph{compatible independence condition} for $(M_{-1},\I_{-1})$.

  A quasi-parabolic Monge-Amp\`ere system is \emph{parabolic} if it is of linear type and $\omegu^0$ is contained in its compatible independence condition.
\end{definition}

Note that linear-type Monge-Amp\`ere systems can arise from non-linearizable differential equations. They correspond to the \nth{2}-order equations that are quasi-linear in second derivatives, i.e. of the form
\[ F(x^a,u,p_a,p_{ab}) = A_\varnothing(x^a,u,p_a) + \sum_{a,b = 0,\ldots n} A_{ab}(x^c,u,p_c) p_{ab} = 0 . \]
It follows that linear equations correspond to linear type Monge-Amp\`ere systems, but not conversely.
In particular, linear type Monge-Amp\`ere equations offer a small, geometrically invariant class containing linear equations.

The nomenclature is in line with linear Pfaffian systems, where `linear' refers to linearity of $\Upsilon$ in the complement of the independence condition. Indeed, by condition 2, there are 1-forms $\etu_{a}$ and a function $L$ so that
\begin{equation}\label{eq:linear MA upsilon}
  \Upsilon \equiv L \omegasu{\varnothing} + \etu_{a}\w\omegasu{a} \mod{\thetasu{\varnothing}} .
\end{equation}
By condition 1 and the maximal non-integrability of $\thetasu{\varnothing}$, the forms $\thetasu{\varnothing}$ and $\omegu^a$ can be completed to a coframing of $M_{-1}$ by forms $\pisu{a}$ so that
\[ \d\thetasu{\varnothing} \equiv - \pisu{a}\w\omegu^a \mod{\thetasu{\varnothing}} . \]
It follows from equation \eqref{eq:linear MA upsilon} and the assumed primitivity of $\Upsilon$ that
\[ 0 \equiv \d\thetasu{\varnothing}\w\Upsilon \equiv (\etu_a\w\pisu{a})\w\omegasu{\varnothing} \mod{\thetasu{\varnothing}} . \]
By an application of Cartan's lemma, there is a symmetric-matrix valued function $(A_{a,b})$ so that
\[ \etu_a \equiv A_{a,b}\pisu{b} \mod{\thetasu{\varnothing}, \omegu^a} , \]
and thus,
\[ \Upsilon \equiv L \omegasu{\varnothing} + A_{a,b}\,\pisu{b}\w\omegasu{a} \mod{\thetasu{\varnothing}} . \]

One can define the $G_{-1}$-structure of coframes adapted as just described, where the structure group $G_{-1}$ consists of matrices of the form
\[ g = \left( \begin{array}{ccc} k_\varnothing & 0 & 0 \\ * & \trans{B^{-1}} & 0 \\ * & BS & B \end{array} \right) \]
for $k_\varnothing \in \R^\times$, $B \in \GL(W)$, and $S \in \Sym^2 W$. Then any two adapted coframings $(\thetasu{\varnothing},\omegu^a,\pisu{a})$ and $(\newcoframe{\thetasu{\varnothing}},\newcoframe{\omegu^a},\newcoframe{\pisu{a}})$ differ by a $G_{-1}$-valued function $g$ on $M$:
\[
\left( \begin{matrix}
\newcoframe{\thetasu{\varnothing}} \\ \newcoframe{\omegu^a} \\ \newcoframe{\pisu{a}}
\end{matrix} \right)
= g^{-1}
\left( \begin{matrix}
\thetasu{\varnothing} \\ \omegu^a \\ \pisu{a}
\end{matrix} \right)
\]

Noting, via the adjugate formula, that
\[ (\newcoframe{\omegasu{a}}) \equiv \det(B)(B^{-1})_a^c (\omegasu{c}) \mod{\thetasu{\varnothing}} , \]
it is not difficult to check that the function $A = (A_{a,b})$, lifted to this $G_{-1}$-structure, varies by the rule
\[ (A_{a,b}(g\cdot u)) = \det(B^{-1}) B^c_a A_{c,d} B^d_b \]
Consequently, there are choices of adapted coframe that normalize $A_{a,b}$, depending on the signature of $A$.

In the case that $M_{-1}$ is a parabolic Monge-Amp\`ere system, the unadapted symbol $A_{a,b}$ is positive semi-definite everywhere, and there are reduced coframings of extending $\thetanotu, \omegu^0$,
\begin{equation}\label{eq:MA parabolic coframing}
  \thetasu{\varnothing}, \omegu^a, \pisu{a}
\end{equation}
so that
\[ \d\thetasu{\varnothing} \equiv -\pisu{a}\w\omegu^a \mod{\thetasu{\varnothing}} \]
and
\[ \Upsilon \equiv L \omegasu{\varnothing} + \pisu{i}\w\omegasu{i} \mod{\thetasu{\varnothing}} . \]

The following Theorem characterizes the parabolic systems that can be de-prolonged to a parabolic Monge-Amp\`ere system.
\begin{theorem}
    Given a parabolic system $(M,\I)$ the following are equivalent:
    \begin{enumerate}
        \item $M$ is locally EDS equivalent to a neighborhood in the prolongation of a parabolic Monge-Amp\`ere system.

        \item The $(n+1)$-form
        \[ \Upsilon \equiv \sum_{i=1}^n \theta_i \w \omega_{(i)} \mod{\thetas{\varnothing}} \]
        on $M$ satisfies
        \[ \d\Upsilon \equiv 0 \mod{\thetas{\varnothing}, \d\thetas{\varnothing}, \Upsilon} . \]

        \item The primary, secondary and tertiary Monge-Amp\`ere invariants of $M$ vanish identically.
    \end{enumerate}
\end{theorem}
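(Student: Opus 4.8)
The plan is to prove the cycle $(1)\Rightarrow(2)\Rightarrow(3)\Rightarrow(1)$, leaning throughout on Theorem~\ref{thm:deprolongation iff invariants vanish iff deprolongation form} and Propositions~\ref{thm: (prop) MA invariants vanish if xi, sigma vanish mod} and~\ref{thm: (prop) MA iff MA invariants}. As there, I will assume $n \ge 3$; the cases $n+1 = 2$ and $n+1 = 3$ are treated by Bryant \& Griffiths~\cite{Characteristic_Cohomology_II} and Clelland~\cite{Clelland_Thesis}. The organizing principle is that, relative to the quasi-parabolic situation of Theorem~\ref{thm:deprolongation iff invariants vanish iff deprolongation form}, being of \emph{linear} type is exactly the condition that the deprolongation form $\Upsilon$ carry no higher-order terms $\thetasu{I}\w\omegasu{J}$ with $|I|=|J|>1$; this in turn forces the secondary Monge-Amp\`ere invariant $V_i^{jkl}$ to vanish outright rather than merely to land in $\bianchi_2'$, and the tertiary invariant $V$ is precisely the obstruction to the final normalization.

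For $(1)\Rightarrow(2)$ I would carry out the prolongation explicitly, mirroring the quasi-parabolic discussion preceding Theorem~\ref{thm:deprolongation iff invariants vanish iff deprolongation form}. Starting from a parabolic Monge-Amp\`ere system $(M_{-1},\I_{-1})$ with a reduced coframing as in \eqref{eq:MA parabolic coframing}, so that $\Upsilon \equiv L\,\omegasu{\varnothing} + \pisu{i}\w\omegasu{i} \mod{\thetasu{\varnothing}}$, one prolongs by writing $\pisu{a} = \thetasu{a} + p_{ab}\omegu^b$ and computes, using $\omegu^b\w\omegasu{i} = \delta^b_i\,\omegasu{\varnothing}$, that
\[ \Upsilon^{(1)} \equiv \thetasu{i}\w\omegasu{i} + (L + p_{ii})\,\omegasu{\varnothing} \mod{\thetasu{\varnothing}} . \]
On the parabolic locus $M$, where the symbol has been normalized to the spatial identity, the relation defining $M$ reads $L + p_{ii} = 0$, so $\Upsilon^{(1)} \equiv \thetasu{i}\w\omegasu{i}\mod{\thetasu{\varnothing}}$; the closure $\d\Upsilon^{(1)} \equiv 0 \mod{\thetasu{\varnothing},\d\thetasu{\varnothing},\Upsilon^{(1)}}$ then follows from the primitivity normalization of $\Upsilon$ on $M_{-1}$ together with naturality of $\d$ under pullback.

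For $(2)\Rightarrow(3)$, note that the form $\Upsilon$ of condition (2) is a special case of the deprolongation form in Theorem~\ref{thm:deprolongation iff invariants vanish iff deprolongation form}(2), so Proposition~\ref{thm: (prop) MA iff MA invariants} already yields $V_i^{0j0} = V_i^{0jk} = 0$ and $V_i^{jkl} \in \bianchi_2'$. Repeating the first step of that Proposition's construction, but with $\Upsilon = \thetasu{i}\w\omegasu{i}$ carrying no correction term, gives
\[ \d\Upsilon \equiv -2 V_i^{jkl}\pisu{kl}\w\thetasu{j}\w\omegasu{i} \mod{\thetasu{\varnothing},\d\thetasu{\varnothing},\Upsilon,\Lambda^2\I} ; \]
since the forms $\pisu{kl}\w\thetasu{j}\w\omegasu{i}$ are independent modulo this ideal and there is no $|I|=2$ term of $\Upsilon$ available to absorb them, the closure hypothesis forces $V_i^{jkl} = 0$. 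For the tertiary invariant, pass to a $G_{MA}$-adapted coframing as in Proposition~\ref{thm: (prop) MA invariants vanish if xi, sigma vanish mod}, where $\sigma^{ij} \equiv V\pisu{ij} \mod{\cartTheta}$, so that $\d\omegu^i$ contains the term $-V\pisu{ij}\w\thetasu{j}$; feeding this into $\d(\thetasu{i}\w\omegasu{i})$ produces a contribution proportional to $V$ that again cannot be absorbed into $\{\thetasu{\varnothing},\d\thetasu{\varnothing},\Upsilon\}$, forcing $V = 0$. For $(3)\Rightarrow(1)$: with all Monge-Amp\`ere invariants zero, Proposition~\ref{thm: (prop) MA invariants vanish if xi, sigma vanish mod} (now with $V = 0$) supplies a $G_{MA}$-coframing in which $\xi_i^0 \equiv \xi_i^j \equiv 0$ and $\sigma^{ab} \equiv 0 \mod{\cartTheta}$, whence $\Upsilon := \thetasu{i}\w\omegasu{i}$ satisfies the closure of condition (2) with no correction terms; by Theorem~\ref{thm:deprolongation iff invariants vanish iff deprolongation form}, $M$ is then the prolongation of a quasi-parabolic Monge-Amp\`ere system $(M_{-1},\I_{-1})$. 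Since $\Upsilon$ has no higher-order terms, the pushed-down form is linear, $\underline{\Upsilon} \equiv L\,\omegasu{\varnothing} + \pisu{i}\w\omegasu{i} \mod{\thetasu{\varnothing}}$, so $(M_{-1},\I_{-1})$ is of linear type with compatible independence condition spanned by $\thetasu{\varnothing}$ and the $\omegu^a$; and vanishing of the primary invariants gives $\d\omegu^0 \equiv 0 \mod{\thetasu{\varnothing},\omegu^0,\Lambda^2\cartTheta}$, exactly as in the proof of Theorem~\ref{thm:deprolongation iff invariants vanish iff deprolongation form}, so that $\omegu^0$ descends to $M_{-1}$ and lies in its compatible independence condition --- hence $(M_{-1},\I_{-1})$ is a parabolic Monge-Amp\`ere system.

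The main obstacle is the bookkeeping in $(2)\Leftrightarrow(3)$ that pins down the tertiary invariant as the exact obstruction: one must verify both that a non-vanishing $V$ genuinely destroys the closure of $\thetasu{i}\w\omegasu{i}$ and cannot be removed by any admissible modification, and, on the deprolongation side, that $V$ is precisely the quantity measuring the incompatibility of the quasi-parabolic direction $\omegu^0$ with the linear-type Lagrangian independence condition --- that is, the gap between a linear-type and a genuinely parabolic Monge-Amp\`ere system. By comparison, $(1)\Rightarrow(2)$ and the deprolongation half of $(3)\Rightarrow(1)$ are close rehearsals of arguments already established for Theorem~\ref{thm:deprolongation iff invariants vanish iff deprolongation form}, the only real care being the constant-rank and small-neighborhood hypotheses needed to realize $M_{-1}$ as a manifold.
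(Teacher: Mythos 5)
Your proposal is correct and follows essentially the same route as the paper: pulling back the normalized Monge-Amp\`ere form under prolongation for $(1)\Rightarrow(2)$, reading off the primary/secondary invariants from the $\xi$-terms in $\d\Upsilon$ and the tertiary invariant from the $\sigma^{ij}\equiv V\pisu{ij}$ term in an MA-adapted coframing for $(2)\Rightarrow(3)$, and invoking Theorem \ref{thm:deprolongation iff invariants vanish iff deprolongation form} plus $\sigma^{ab}\equiv 0 \mod{\cartTheta}$ to descend the independence condition $\{\thetasu{\varnothing},\omegu^a\}$ (with $\omegu^0$ inside it) for $(3)\Rightarrow(1)$. The only differences are organizational (you run the cycle $(1)\Rightarrow(2)\Rightarrow(3)\Rightarrow(1)$ and are more explicit in the prolongation computation), not substantive.
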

\begin{proof}
    If $M$ embeds in the prolongation of a parabolic Monge-Amp\`ere system $(M_{-1}, \I_{-1} = \{\thetanotu, \Upsilon'\})$, then the pullback of $\Upsilon'$ to $M$ will be an $n+1$-form $\Upsilon$ satisfying condition (2).

    If $M$ satisfies condition (2), then from
    \[ 0 \equiv \d\Upsilon \equiv -2\xi_i^j \w \thetasu{j}\w\omegasu{i} - \xi_i^0\w(\thetasu{0}\w\omegasu{i} + \thetasu{i}\w\omegasu{0}) \mod{\thetanotu, \d \thetanotu, \Upsilon, \Lambda^2\I} \]
    one sees that the primary and secondary Monge-Amp\`ere invariants vanish. Then, assuming that a $MA$-adapted coframing has been chosen,
    \[ \d\Upsilon \equiv V\pisu{jk}\w\thetasu{i}\w\thetasu{k}\w\omegasu{ji} \mod{\thetanotu, \d \thetanotu, \Upsilon} , \]
    so the tertiary Monge-Amp\`ere invariant must vanish as well.

    This calculation also shows that if the primary, secondary, and tertiary Monge-Amp\`ere invariants vanish on $M$, then $\Upsilon$ satisfies condition 2.

    Finally, if conditions (2) and (3) hold, then from Theorem \ref{thm:deprolongation iff invariants vanish iff deprolongation form}, $M$ embeds in the prolongation of a quasi-parabolic system $M_{-1}$.  But now $\sigma^{ab} \in \cartTheta$, so
    \[ \d\omegu^a \equiv 0 \mod{\thetanotu, \omegu^b, \Lambda^2 \cartTheta} . \]
    It follows that
    \[ \widetilde{J} = \{\thetasu{\varnothing}, \omegu^a\} \]
    has Cartan system $\cartTheta$, so descends to define an ideal
    \[ J = \{\thetanotu, \omegu^a \} \]
    on $M_{-1}$.

    On $M$, the conditions
    \[ \d\tilde\theta_{\varnothing} \equiv 0 \mod{\widetilde{J}} \]
    and for any $\alpha \in \widetilde{J}$,
    \[ \alpha\w\Upsilon \equiv 0 \mod{\tilde\theta_{\varnothing}, \tilde\omega^0\w\ldots\w\tilde\omega^n} \]
    are obvious. But the map $M \to M_{-1}$ is a submersion, so it follows that $M_{-1}$ is a linear type.

    On $M$, it is clear that $\{\thetanotu, \omegu^0\} \subset \{\thetanotu, \omegu^a\}$, so in fact $M_{-1}$ is a parabolic Monge-Amp\`ere equation.
\end{proof}

\begin{example}
  It is not difficult to check from the coframing given in Example \ref{ex:Mean Curvature Flow} that the Monge-Amp\`ere invariants of the mean curvature flow vanish.
\end{example}

\section{The Extended Goursat Invariant}\label{sec:Goursat invts}
\subsection{Definition of the Goursat Invariants}
Continuing on the equivalence problem, assume that the primary Monge-Amp\`ere invariants of a parabolic system $M$ vanish. Given a 1-adapted coframing, there are functions $a_{ij}$ and $a_i$ on $\gBun_1$ so that
\[ \xi_i^0 \equiv a_{ij}\omega^j \mod{\thetas{\varnothing},\thetas{a}, \omega^0} . \]

In this case,
\begin{align*}
  0 = \d^2\thetas{i} & = d(-\kappa_i\w\thetas{\varnothing} -\beta_i^j\w\thetas{j} - \pis{ij}\w\omega^j - \pis{i0}\w\omega^0 - \xi_i^0\w\thetas{0} - \xi_i^j\w\thetas{j}) \\
   & \equiv (-\beta_i^j\w\xi^0_j - \gamma^0_{ij}\w\omega^j - \d\xi^0_i + \beta_0^0\w\xi^0_i)\w\thetas{0} \mod{\thetas{\varnothing}, \thetas{i}, \omega^0, \sbforms^3}
\end{align*}
gives, after an application of Cartan's lemma,
\[ \d\xi_i^0 \equiv \beta_0^0\w\xi_i^0 - \beta_i^j\w\xi_j^0 - \gamma^0_{ij}\w\omega^j \mod{\thetas{\varnothing},\thetas{a},\omega^0,\sbforms^2} . \]
Plugging in for $\xi_i^0$, one finds that
\[ (\d a_{ij} - a_{ik}(\delta^k_j\kappa_\varnothing - \beta^k_j))\w\omega^j \equiv (\beta_0^0a_{ij} - \beta_i^k a_{kj} - \gamma^0_{ij})\w\omega^j \mod{\thetas{\varnothing},\thetas{a},\omega^0,\sbforms^2} \]
and thus
\begin{align*}
    \d a_{ij} & \equiv (\beta_0^0 + \kappa_\varnothing)a_{ij} - \beta_i^k a_{kj} - a_{ik}\beta_j^k - \gamma^0_{ij} && \mod{\sbforms^1}
    % & \equiv (2\beta_{\tr} - \beta_0^0 - \kappa_\varnothing) a_{ij} + \mathring{\beta}_i^k a_{kj} + a_{ik}\mathring{\beta}_j^k + \gamma^0_{ij} && \mod{\sbforms^1}.
\end{align*}
Integrating, the functions $a_{ij}$ vary in the fiber of $\gBun_1$ by the rule
\begin{equation}\label{eq:Goursat variation}
  (a_{ij}(g\cdot u)) = \frac{B_0^0 k_\varnothing}{b^2}(\mathring{B}^{-1})_i^k(a_{kl}(u))(\mathring{B}^{-1})_j^l - D^0_{ij}
\end{equation}
for all $u \in \gBun_1$ and $g\in G_1$.
The component $(D^0_{ij})$ of $g$ is symmetric and traceless, so there are 1-adapted coframes for which the symmetric, traceless component of $a_{ij}$ vanishes.
The remnant splits into a pure trace component and an antisymmetric component,
\[ a_{ij} = a\delta_{ij} + \hat{a}_{ij} . \]

Now recall the normalization $\pisu{ii} = 0$ that was made to define $0$-adapted coframings. If this requirement is relaxed, then it is possible to absorb the trace component of $a_{ij}$. Indeed, the parabolic change of coframing
\[ \pisu{ij} \xmapsto{\qquad}  \pisu{ij} + a\delta_{ij}\thetasu{0} \]
absorbs $a$, at cost of
\[ \pisu{ii} = n a \thetasu{0} . \]

Say that a parabolic coframing of $M$ is $2$-adapted if $V_i^{0ja} = 0$ and the symmetric component of $a_{ij}$ is absorbed, so that $a_{ij} = \hat{a}_{ij}$.
These adapted coframes form a $G_{2}$-structure\footnote{This group is unrelated to the exceptional simple group $G_2$.} $\gBun_2$ on $M$, where $G_2$ consists of matrices as in \eqref{eq:G_0 structure matrix} for which \[ S^{0a} = 0 \mbox{\quad and\quad } D^0_{ij} = 0 . \]
The pseudo-connection forms $\gamma^0_{ij}$ reduce to semi-basic torsion forms on $\gBun_2$.

The function $a$ is related to the classical Goursat invariant (\cite{Goursat:SurLIntegration}, but see \cite{Characteristic_Cohomology_II} for historical discussion) and measures whether the subprincipal symbol vanishes. The component $\hat{a}_{ij}$, called the \emph{extended Goursat invariant},  measures whether there is a choice of coordinates for which the parabolic equation is in evolutionary form.

Consider first the classical Goursat invariant. The variation of $a$ is given by Equation \ref{eq:Goursat variation}, which simplifies to
\begin{equation*}
    a(g\cdot u) = \frac{B_0^0 k_\varnothing}{b^2}a .
\end{equation*}
As such, in any neighborhood where $a$ is non-zero, there are $2$-adapted coframes for which $a = 1/n$, and thus $\pisu{ii} = \thetasu{0}$.
\begin{definition}
    A parabolic system $M$ with $2$-adapted coframings is \emph{Goursat} if $\pisu{ii} = 0$ everywhere.

    $M$ is \emph{non-Goursat} if $a$ is nowhere vanishing, in which case the 2-adapted coframings for which $\pisu{ii} = \thetasu{0}$ are said to be \emph{3-adapted}.
\end{definition}
The bundle $\gBun_3$ of $3$-adapted coframes has structure group $G_3$, consisting of the matrices in $G_2$ for which $B_0^0 = b^2/k_{\varnothing}$.

Turning to the extended Goursat invariant, we have the following Lemma.
\begin{lemma}
    A parabolic system $(M,\I)$
    has 3-adapted coframings for which furthermore
    \[ \d\omegu^0 \equiv 0 \mod{\omegu^0} \]
    if and only if
    its primary Monge-Amp\`ere invariants $(V^{0ab}_i)$ and the extended Goursat invariants $(\goursam_{ij})$ vanish identically.
\end{lemma}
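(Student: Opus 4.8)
The plan is to read the obstruction to integrability of $\omegu^0$ straight off Cartan's first structure equation for $\d\omega^0$, and then to verify that vanishing of the two named families of invariants is exactly what lets one absorb everything except a multiple of $\omegu^0$ by a suitable choice of $3$-adapted coframing. Note first that the very existence of a $3$-adapted coframing already forces the primary Monge-Amp\`ere invariants to vanish, since being $2$-adapted requires $V_i^{0j0}=V_i^{0jk}=0$; thus one may work on $\gBun_2$ --- and, in the non-Goursat case, on $\gBun_3$ --- where $\xi_i^0 \equiv a_i\omegu^0 + \goursam_{ij}\omegu^j \mod{\thetasu{\varnothing},\thetasu{a}}$, and where, using $n\ge 3$ exactly as in the deduction following the primary Monge-Amp\`ere reduction, $\sigma^{0a}\equiv 0 \mod{\cartTheta}$. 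Pulling
\[ \d\omega^0 \equiv -\sigma^{0a}\w\thetas{a} - (\kappa_{\varnothing}-\beta_0^0)\w\omega^0 + \xi_i^0\w\omega^i \mod{\thetas{\varnothing}} \]
back along such a coframing and reducing modulo the algebraic ideal $\{\thetasu{\varnothing},\thetasu{a},\omegu^0\}$ --- which annihilates the $\sigma^{0a}\w\thetas{a}$ term, the $\omega^0$ term and the $\thetasu{\varnothing}$ tail, and kills the $a_i\omegu^0\w\omegu^i$ part of $\xi_i^0\w\omega^i$ --- leaves
\[ \d\omegu^0 \equiv \goursam_{ij}\,\omegu^j\w\omegu^i \mod{\thetasu{\varnothing},\thetasu{a},\omegu^0}. \]

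For the forward implication this is immediate: if such a coframing satisfies $\d\omegu^0\equiv 0\mod{\omegu^0}$, then the right-hand side above vanishes, and since the $2$-forms $\omegu^i\w\omegu^j$ with $i<j$ are independent modulo $\{\thetasu{\varnothing},\thetasu{a},\omegu^0\}$ we conclude $\goursam_{ij}=0$ at that coframing, hence identically, as $\goursam$ is a relative invariant. Together with the automatic vanishing of the primary Monge-Amp\`ere invariants this is the asserted conclusion.

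For the converse, suppose both families vanish. Then $\goursam=0$, so on a $3$-adapted coframing $\xi_i^0\equiv a_i\omegu^0\mod{\thetasu{\varnothing},\thetasu{a}}$, together with $\sigma^{0a}\equiv 0\mod{\cartTheta}$. Expanding $\d\omega^0$ and discarding every term divisible by $\omegu^0$, what survives is a semibasic $2$-form of the shape $(\,\cdot\,)\w\thetasu{\varnothing} + c_{ab}\,\thetasu{b}\w\thetasu{a} + d_{aj}\,\omegu^j\w\thetasu{a}$, whose coefficients are built only from the $\thetasu{b}$- and $\omegu^j$-components of the now-semibasic $\sigma^{0a}$ and the $\thetasu{b}$-components of $\xi_i^0$. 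I would absorb this into the $\omegu^0$-line using the coframe freedom still present in $G_3$: one may replace $\omegu^0$ by $\omegu^0+(\text{a combination of }\thetasu{\varnothing},\thetasu{j},\omegu^i)$ and may still shift $\thetasu{a}$ and the semibasic $\sigma^{0a}$. The symmetry relations that make the absorption consistent come from $\d^2\thetasu{\varnothing}=0$ and $\d^2\thetasu{i}=0$, in the same spirit as the earlier derivation of $\sigma^{0a}\equiv 0\mod{\cartTheta}$, and again use $n\ge 3$ through several applications of Cartan's lemma in distinct spatial indices. A final replacement $\omegu^0\mapsto\omegu^0+c\,\thetasu{\varnothing}$ then forces the leftover $\thetasu{\varnothing}$-term to be itself a multiple of $\thetasu{\varnothing}\w\omegu^0$, yielding $\d\omegu^0\equiv 0\mod{\omegu^0}$.

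The main obstacle is this converse step: the forward direction is essentially a one-line reading of the structure equation, whereas for the converse one must show there is no further hidden invariant --- that once $\goursam$ and the primary Monge-Amp\`ere invariants vanish, \emph{every} torsion term of $\d\omega^0$ transverse to $\omegu^0$ is genuinely absorbable --- and this is where the $\d^2=0$ bookkeeping and the standing hypothesis $n\ge 3$ do the real work. A minor point to dispatch along the way is the Goursat/non-Goursat dichotomy: strictly, ``$3$-adapted'' is defined only when $M$ is non-Goursat, and in the Goursat case one runs the identical argument on $\gBun_2$ with the normalization $\pisu{ii}=0$ in place of $\pisu{ii}=\thetasu{0}$.
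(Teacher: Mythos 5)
Your forward direction is fine and is essentially the paper's: reading $\d\omegu^0 \equiv \goursam_{ij}\,\omegu^j\w\omegu^i \mod{\thetasu{\varnothing},\thetasu{a},\omegu^0}$ off the structure equation and using independence of the $\omegu^i\w\omegu^j$. The gap is in the converse, and it is a real one: your absorption mechanism does not exist. Within the reduced structure groups the only remaining modification of $\omegu^0$ is rescaling and adding a multiple of $\thetasu{\varnothing}$ (the $k^0$ entry); the shifts of $\omegu^0$ by $\thetasu{a}$ are the $S^{0a}$ entries, which were already normalized away in passing to the $1$-adapted bundle (and hence are absent from $G_2$ and $G_3$), and a shift of $\omegu^0$ by $\omegu^i$ is not an element of $G_0$ at all, since the block $B$ is upper triangular so that $\omegu^0$ transforms only into a multiple of itself plus lower-order terms. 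Consequently the torsion terms $M^{ab}\,\thetasu{a}\w\thetasu{b}$ and the general part of $N_i^a\,\thetasu{a}\w\omegu^i$ in
\[ \d\omegu^0 \equiv -\kappa^0\w\thetasu{\varnothing} + M^{ab}\thetasu{a}\w\thetasu{b} + N_i^a\thetasu{a}\w\omegu^i \mod{\omegu^0} \]
cannot be absorbed by any change of $3$-adapted coframing; they have to be shown to vanish (or to degenerate) as integrability conditions.

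That is what the paper actually does, and the identities come from $\d^2\omegu^0 = 0$, not from $\d^2\thetasu{\varnothing}=0$ and $\d^2\thetasu{i}=0$ as you suggest. Concretely: computing $\d^2\omegu^0 \equiv -N_i^a\pisu{aj}\w\omegu^j\w\omegu^i \mod{\thetasu{\varnothing},\thetasu{a},\omegu^0}$ forces $N_i^0 = 0$ and $N_i^j = N_{\tr}\delta_i^j$, and only this pure-trace remnant is removable, by the one available shift $\omegu^0 \mapsto \omegu^0 + N_{\tr}\thetasu{\varnothing}$ (using $\d\thetasu{\varnothing}\equiv -\thetasu{a}\w\omegu^a$). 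A second application of $\d^2\omegu^0 = 0$, picking for each pair $a,b$ a spatial index $i$ distinct from both (this is where $n\ge 3$ enters), kills $M^{ab}$ outright, and a final application forces $\kappa^0 \equiv 0 \mod{\thetasu{\varnothing},\omegu^0}$, whence $\d\omegu^0 \equiv 0 \mod{\omegu^0}$. Note also that your last step, choosing a function $c$ with $\omegu^0\mapsto \omegu^0 + c\,\thetasu{\varnothing}$ to dispose of the $\kappa^0\w\thetasu{\varnothing}$ term, cannot work as stated: $\d c$ is not freely prescribable, and in any case $\kappa^0$ could a priori have $\pisu{ab}$-components, which no such substitution touches; its degeneration must again be extracted from $\d^2\omegu^0=0$. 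So the structure of the argument you would need is "Bianchi identities first, then one small absorption," rather than the wholesale absorption you propose.
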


\begin{proof}
    If $(M,\I)$ has a 3-adapted coframing with $\d\omegu^0\w\omegu^0 = 0$, then
    \[ 0 \equiv \d\omegu^0 \equiv \xi_i^0\w\omegu^i \equiv (V^{0ab}_i\pisu{ab} + \hat{a}_{ij}\omegu^j)\w\omegu^i \mod{\thetasu{\varnothing}, \thetasu{a}, \omegu^0} , \]
    so that $V^{0ab}_i = 0$ and then $\goursam_{ij} = 0$.

    Conversely, suppose that both $(V^{0ab}_i)$ and $(\goursam_{ij})$ are identically zero on $M$ and fix a 3-adapted coframing. Then there is a matrix valued function $(N_i^a)$ and an anti-symmetric matrix valued function $(M^{ab})$ so that
    \[ \d\omegu^0 \equiv -\kappa^0\w\thetasu{\varnothing} + M^{ab}\thetasu{a}\w\thetasu{b} + N_i^a\thetasu{a}\w\omegu^i \mod{\omegu^0} . \]
    From
    \[ 0 = \d^2 \omegu^0 \equiv -N_i^a \pisu{aj}\w\omegu^j\w\omegu^i \mod{\thetasu{\varnothing},\thetasu{a},\omegu^0} . \]
    % Coeeficient of w^i pi_0j w^j is N_i^0. For i=/=j, coeff of pi_kk w^k w^i is N_i^k. i=k, coeff of pi_ik w^k w^i is (N_i^i - N_k^k)
    Collecting coefficients for the cases $a\neq i$ and $a = i$,
    \[ 0 \equiv - \sum_{a \neq i} (N_i^a\omegu^i)\w\pisu{aj}\w\omegu^j - \sum_{i \neq j} (N_i^i - N_j^j) \pisu{ij}\w\omegu^j\w\omegu^i \mod{\thetasu{\varnothing},\thetasu{a},\omegu^0} , \]
    it follows that
    \[ N^0_i = 0 \quad \mbox{and} \quad N_i^k = N_{\tr} \delta_i^k \]
    for some function $N_{\tr}$ on $M$.
    Reduce coframes by replacing $\omegu^0$ with $\omegu^0 + N_{\tr}\thetasu{\varnothing}$, giving a new $3$-adapted coframing for which
    \[ \d\omegu^0 \equiv -\kappa^0\w\thetasu{\varnothing} + M^{ab}\thetasu{a}\w\thetasu{b} \mod{\omegu^0} . \]

    Continuing with this reduction,
    \[ 0 = \d^2 \omegu^0 \equiv M^{ab}(\pisu{ai}\w\thetasu{b}\w\omegu^i - \pisu{bi}\w\thetasu{a}\w\omegu^i) -\kappa^0\w\thetasu{i}\w\omegu^i \mod{\thetasu{\varnothing},\omegu^0,\Lambda^2 \I} . \]
    For each $a, b$ and choice of $i$ equal to neither, the coefficient of $\pisu{ai}\w\thetasu{b}\w\omegu^i$ is $M^{ab}$, so
    \[ (M^{ab}) = 0 . \]
    Finally, since
    \[ 0 = \d^2 \omegu^0 \equiv \kappa^0\w\thetasu{i}\w\omegu^i \mod{\thetasu{\varnothing},\omegu^0} , \]
    it follows that
    \[ \kappa^0 \equiv 0 \mod{\thetasu{\varnothing},\omegu^0} , \]
    and thus
    \[ \d \omegu^0 \equiv 0 \mod{\omegu^0} . \]
\end{proof}

%
% To summarize, when the extended Goursat invariant vanishes, there is a reduction of coframes to a $G_4$-structure, on which the structure equations simplify to
% \[ \d\thetas{i} \equiv - \pis{ij}\w\omega^j - \pis{i0}\w\omega^0 \mod{\thetas{\varnothing},\thetas{i}} \]
% and
% \[ \d \omega^0 \equiv 0 \mod{\omega^0} . \]
The set of coframes with $N_{\tr}$ absorbed is a $G_4$-structure $\gBun_4$, where $G_4$ consists of the matrices in $G_3$ for which
\[ k^0 = 0 . \]
The pseudo-connection form $\kappa^0$ is semi-basic when restricted to $\gBun_4$.

\subsection{Geometric interpretation of the Goursat Invariants}\label{sec:Integrable characteristics}

The following theorem shows that the Goursat invariant are exactly the measure of whether a parabolic equation can be put into evolutionary form.
\begin{theorem}\label{thm:integrable characteristics}
  For a real analytic parabolic system $(M,\I)$ in $n+1>3$ variables, the following conditions are equivalent:
  \begin{enumerate}
    \item The primary Monge-Amp\`ere invariants $(V^{0ab}_i)$ and the extended Goursat invariants $\goursam_{ij}$ vanish identically while the classical Goursat invariant is nowhere vanishing.
    \item $M$ has a $3$-adapted coframing so that
    \[ \d\omegu^0 \equiv 0 \mod{\thetasu{\varnothing},\thetasu{a},\omegu^0} \]
    and
    \[ \pisu{ii} = \thetasu{0} . \]
    \item $M$ has a $4$-adapted coframing so that
    \[ \d\omegu^0 \equiv 0 \mod{\omegu^0} \]
    and
    \[ \pisu{ii} = \thetasu{0} . \]
    \item $M$ is locally equivalent to a parabolic equation in evolutionary form.
  \end{enumerate}
\end{theorem}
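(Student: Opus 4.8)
The plan is to run the chain of implications, separating the \emph{structural} equivalences $(1)\Leftrightarrow(2)\Leftrightarrow(3)$, which follow from the coframe reductions and Cartan's lemma computations already in hand, from the \emph{geometric} equivalence with $(4)$, where the differential equation must actually be produced. Because $(1)$ assumes the classical Goursat invariant is nowhere zero, we work throughout in the non-Goursat case, so $3$- and $4$-adapted coframings exist. The implication $(3)\Rightarrow(2)$ is immediate, since a $4$-adapted coframing is $3$-adapted and $\omegu^0$ lies in $\{\thetanotu,\thetasu{a},\omegu^0\}$. For $(2)\Rightarrow(1)$ I would argue as in the proof of the Lemma preceding the theorem: a $3$-adapted coframing has vanishing primary Monge--Amp\`ere invariants and satisfies $\xi_i^0\equiv\goursam_{ij}\omegu^j\mod{\thetanotu,\thetasu{a},\omegu^0}$, so combining $(2)$ with the structure equation $\d\omegu^0\equiv\xi_i^0\w\omegu^i\mod{\thetanotu,\thetasu{a},\omegu^0}$ and applying Cartan's lemma (as at the start of Section~\ref{sec:Goursat invts}) forces $(V^{0ab}_i)=0$ and $\goursam_{ij}=0$; the normalization $\pisu{ii}=\thetasu{0}$ records that the classical Goursat invariant equals $1/n\neq0$. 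Finally $(1)\Leftrightarrow(3)$ is the Lemma preceding the theorem together with the non-Goursat normalization: that Lemma produces from the vanishing of $(V^{0ab}_i)$ and $(\goursam_{ij})$ a coframing with $\d\omegu^0\equiv0\mod{\omegu^0}$, the reduction carried out in its proof (absorbing $N_{\tr}$) lands in $\gBun_4$, and $\pisu{ii}=\thetasu{0}$ holds because the coframing is $3$-adapted.

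For $(4)\Rightarrow(2)$, realize a neighborhood in $M$, via the embedding of Example~\ref{ex:jet formulation of parabolic}, as the hypersurface $\{p_0=F(x^0,x^i,u,p_i,p_{ij})\}$ of $J^2(\R^{n+1},\R)$, with coordinates $x^a,u,p_i,p_{ab}$. Set $\thetasu{\varnothing}=\d u-p_a\d x^a$ and $\omegu^a=\d x^a$, and complete to a parabolic coframing by contact forms $\thetasu{a}$ and forms $\pisu{ab}$ that normalize the positive-definite spatial symbol $\partial F/\partial p_{ij}$, arranged so that $\pisu{ii}=\thetasu{0}$ after absorbing semi-basic terms. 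Then $\d\omegu^0=\d\d x^0=0$, giving the first condition of $(2)$, and $\pisu{ii}=\thetasu{0}$ gives the second. What remains is to check from the structure equations that this coframing is $3$-adapted, which one reads off from the explicit shape of $F$ --- the vanishing of $(V^{0ab}_i)$ and of the symmetric part of $\goursam_{ij}$ being forced, exactly as in the $(2)\Rightarrow(1)$ step, by $\d\omegu^0=0$.

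The implication $(3)\Rightarrow(4)$ is the heart of the matter. Take a $4$-adapted coframing with $\d\omegu^0\equiv0\mod{\omegu^0}$ and $\pisu{ii}=\thetasu{0}$. By the Frobenius theorem $\omegu^0$ is exact after rescaling, say $\omegu^0=\d x^0$. The Cartan system $\cartTheta=\{\thetanotu,\thetasu{a},\omegu^a\}$ is Frobenius and, by the structure equations, $\d\thetanotu$ has rank $2(n+1)$ modulo $\thetanotu$, so a relative Pfaff normal form for $\thetanotu$ --- keeping $x^0$ among the independent coordinates --- yields jet coordinates $x^a,u,p_a,p_{ab}$ realizing the neighborhood as a hypersurface $\{F=0\}\subset J^2(\R^{n+1},\R)$ with $\d x^0=\omegu^0$. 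The condition $\pisu{ii}=\thetasu{0}$ then forces the symbol matrix $(\partial F/\partial p_{ab})$ to be supported on the spatial block (its kernel direction dual to $\d x^0$), while the non-Goursat condition says $\partial F/\partial p_0\neq0$; by the implicit function theorem $\{F=0\}$ rewrites as $p_0=\tilde F(x^0,x^i,u,p_i,p_{ij})$, which is evolutionary form.

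The delicate step is $(3)\Rightarrow(4)$: not any single computation, but the passage from the adapted coframing to honest coordinates and a PDE, and in particular producing jet coordinates in which the Frobenius coordinate $x^0$ is the independent variable dual to $\omegu^0$ and the kernel of the symbol lines up with $\d x^0$. The cleanest route is to first invoke the embedding of a parabolic system into $J^2(\R^{n+1},\R)$ --- this is where real analyticity enters --- and then argue entirely at the level of the defining function $F$, checking that this embedding can be chosen compatibly with $\omegu^0=\d x^0$; the hypothesis $n+1>3$ is inherited from the repeated uses of Cartan's lemma in the earlier coframe reductions.
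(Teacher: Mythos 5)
Your handling of the structural equivalences $(1)\Leftrightarrow(2)\Leftrightarrow(3)$ and of $(4)\Rightarrow(2)$ matches the paper, but there is a genuine gap at the heart of $(3)\Rightarrow(4)$. The relative Pfaff normal form applied to $\thetanotu$ (keeping $x^0$ independent) produces only the first-order coordinates $x^a, u, p_a$ with $\thetanotu = \d u - p_a\d x^a$; it does \emph{not} "yield jet coordinates $x^a,u,p_a,p_{ab}$ realizing the neighborhood as a hypersurface in $J^2(\R^{n+1},\R)$," and that is exactly where the work lies. From $\d\thetanotu \equiv -\thetasu{a}\w\omegu^a \equiv -\d p_a\w\d x^a \mod{\thetanotu}$ one only gets a symplectic-matrix-valued relation $\thetasu{a} \equiv A^b_a\d p_b + B_{ab}\d x^b$, $\omegu^a \equiv C^{ab}\d p_b + D^a_b\d x^b \mod{\thetanotu}$, and one must normalize it compatibly with $\omegu^0 = \d x^0$: exchange $x^i$ with $p_i$ (adding $p_ix^i$ to $u$) so that $A$ is invertible with $A^0_i=0$, $A^0_0=1$, absorb $C$ by a parabolic coframe modification, deduce $D = \trans{A^{-1}}$, and only then define $p_{ab} = -(A^{-1})_a^cB_{cb}$ and verify that $x^a,u,p_a,p_{ab}$ give an immersion into $J^2(\R^{n+1},\R)$ carrying $\I$ to the contact ideal. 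Your proposed "cleanest route" --- invoke the general local embedding of a parabolic system into $J^2(\R^{n+1},\R)$ and then "check that this embedding can be chosen compatibly with $\omegu^0 = \d x^0$" --- begs precisely this question: the general embedding has no reason to respect $\omegu^0$, and making it do so is the construction above, not a check one can defer.

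The downstream claims inherit this gap. That $\pisu{ii} = \thetasu{0}$ "forces the symbol matrix to be supported on the spatial block" is only meaningful once the $p_{ab}$ have been aligned with the adapted coframe; in the paper this is the observation that the forms $\pisu{0a}$ are independent of the rest of the coframing, so the pullback of $\d F$ forces $\pDeriv{F}{p_{00}} = \pDeriv{F}{p_{a0}} = 0$, and then the unique linear relation $\lambda(\thetasu{0} - \pisu{ii}) \equiv \d F$ identifies $\pDeriv{F}{p_0} = \lambda \neq 0$, which is what licenses the implicit function theorem. Two smaller points: real analyticity is not what powers $(3)\Rightarrow(4)$ --- the paper's construction is entirely by Pfaff/Darboux normal forms and coframe algebra, with no Cartan--K\"ahler input at this step --- and in your $(4)\Rightarrow(2)$ discussion note that $\goursam_{ij}$ is the \emph{antisymmetric} part of $a_{ij}$ (the symmetric traceless part is absorbed by the group parameter $D^0_{ij}$), so it is not its "symmetric part" whose vanishing is at stake.
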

\begin{proof}
  Conditions (1), (2), and (3) are equivalent by the work of the previous sections. In particular, the coframing given in condition (2) can be immediately reduced to a $4$-adapted coframing.

  If $M$ is locally equivalent to an evolutionary parabolic
  \[ p_0 = F(x^a,u,p_i,p_{ij}) , \]
  then the $0$-adapted coframings of $M$ as described in Example \ref{ex:jet formulation of parabolic} can readily be $4$-adapted. Indeed, using the notation there, at any point of $M$, the relation given by $\d F = 0$ reduces to
  \[ \hat\theta_{0} \equiv \d p_0 = \d F \equiv \pDeriv{F}{p_{ij}}\d p_{ij} \mod{\thetasu{\varnothing}, \thetasu{i}, \omegu^a} , \]
  and any choice of $B_i^j$ diagonalizing the symbol will preserve $\hat\omega^0 = \d x^0$.

  That (3) implies (4) will be shown by constructing a local EDS embedding of $M$ into $J^2(\R^{n+1},\R)$, explicitly exhibiting $M$ as locally equivalent to a parabolic equation in evolutionary form.

  Fix a $4$-adapted coframing
  \[ \thetasu{\varnothing}, \thetasu{a}, \omegu^a, \pisu{ab} \]
  near a point $y \in M$. By rescaling $\omegu^0$ if necessary, there is a function $x^0$ defined near $y$ so that
  \[ \omegu^0 = \d x^0 . \]

  Since
  \[ \thetasu{\varnothing}\w\d\thetasu{\varnothing}^{n+1}\w\d x^0 = 0 , \qquad \thetasu{\varnothing}\w\d\thetasu{\varnothing}^{n}\w\d x^0 \neq 0,  \]
  a relative version of Darboux's Theorem shows that there are independent functions $x^i, u, p_a$ on $M$ so that (rescaling $\thetasu{\varnothing}$ if necessary)
  \[ \thetasu{\varnothing} = \d u - p_i \d x^i - p_0 \d x^0 . \]
  Since
  \begin{align*}
    \d \thetasu{\varnothing} \equiv & -\d p_i\w\d x^i - \d p_0\w\d x^0 \mod{\thetasu{\varnothing}} \\
     \equiv & - \thetasu{i}\w\omegu^i - \thetasu{0}\w\d x^0 \mod{\thetasu{\varnothing}} ,
  \end{align*}
  there is a symplectic-matrix valued function, with block components of the form
  \[ \left(\begin{array}{cc}
     A & B \\
     C & D
  \end{array}\right) \qquad
  \begin{array}{c}
      \trans{A}C, \ \trans{B}D \mbox{ symmetric, } \\
      \trans{A}D - \trans{C}B = \Id
  \end{array} \]
  so that
  \begin{align*}
    \thetasu{a} \equiv & A^b_a\d p_b + B_{ab}\d x^b \mod{\thetasu{\varnothing}} \\
    \omegu^a \equiv & C^{ab}\d p_b + D^a_b\d x^b \mod{\thetasu{\varnothing}} .
  \end{align*}

  By adding $p_ix^i$ (no sum) to $u$, any $x^i$ can be exchanged with the corresponding $p_i$ if necessary, so assume without loss of generality that $A$ is of the form
  \[ A = \left( \begin{array}{cc}
    A_0^0 & A^i_0 \\
    A^0_i & A'
  \end{array} \right) \in
  \left( \begin{array}{cc}
    \R & \trans{\R^n} \\
    \R^n & \GL(\R^n)
  \end{array} \right) . \]
  It is clear from $\omegu^0 = \d x^0$ that
  \[ C^{0a} = 0, \qquad D^0_i = 0, \quad \mbox{ and } \quad D^0_0 = 1 , \]
  so write
  \[ C =
  \left( \begin{array}{cc}
    0 & 0 \\
    C^{i0} & C'
  \end{array} \right) \in
  \left( \begin{array}{cc}
    \R & \trans{\R^n} \\
    \R^n & \operatorname{End}(\R^n)
  \end{array} \right). \]

  Since $\trans{A}C$ is symmetric, so is $\trans{A'}C'$, as well as $C'(A')^{-1}$. After the parabolic change of coframing
  \[  \omegu^i \xmapsto{\quad} \omegu^i + (C'(A')^{-1})^{ij} \thetasu{j} , \]
  we may assume without loss of generality that $C' = 0$. But once $C' = 0$, it follows from the symmetry of $\trans{A}C$ plus the invertibility of $A'$ that $C = 0$. Then, from $\trans{A}D - \trans{C}B = \Id$ it follows that
  \[ D = \trans{A^{-1}} . \]
  Since each previous step preserved $\omegu^0 = \d x^0$, it must also be true that $(A^0_i) = 0$ and $A^0_0 = 1$. Note that this means that
  \[ \thetasu{0} \equiv \d p_0 \mod{\omegu^a} . \]

  Let
  \[ p_{ab} = -(A^{-1})_a^cB_{cb} , \]
  which is symmetric in the indices $a, b$ because $\trans{B}D = \trans{B}\trans{A}^{-1}$ is symmetric. Then
  \begin{align*}
    \thetasu{a} \equiv & A^b_a(\d p_b - p_{bc}\d x^c) \mod{\thetasu{\varnothing}} \\
    \omegu^a \equiv & (\trans{A}^{-1})^a_b\d x^b \mod{\thetasu{\varnothing}} .
  \end{align*}

  Now, it is straightforward to compute that
  \[ -\pisu{ia}\w\omegu^a \equiv \d\thetasu{i} \equiv -(A_i^b \d p_{bc} A^c_a)\w\omegu^a \mod{\thetasu{\varnothing}, \thetasu{j}} , \]
  \[ -\pisu{0a}\w\omegu^a \equiv \d\thetasu{0} \equiv -(A_0^b \d p_{bc} A^c_a)\w\omegu^a \mod{\thetasu{\varnothing}, \thetasu{a}} , \]
  from which it follows that
  \begin{align*}
      \pisu{ij} & \equiv A_i^a \d p_{ab} A^b_j = A_i^k \d p_{kl} A^l_j && \mod{\thetasu{\varnothing}, \thetasu{j}, \omegu^a} , \\
      \pisu{i0} & \equiv A_i^a \d p_{ab} A^b_0 = A_i^j \d p_{jb} A^b_0 && \mod{\thetasu{\varnothing}, \thetasu{a}, \omegu^a} , \\
      \pisu{00} & \equiv A_0^a \d p_{ab} A^b_0 && \mod{\thetasu{\varnothing}, \thetasu{a}, \omegu^a} .
  \end{align*}
  These can be solved for the $\d p_{ab}$:
  \begin{align*}
      \d p_{ij} & \equiv (A^{-1})_i^k \pisu{kl} (A^{-1})^l_j && \mod{\thetasu{\varnothing}, \thetasu{j}, \omegu^a } , \\
      \d p_{i0} & \equiv (A^{-1})_i^j \pisu{j0} && \mod{\thetasu{\varnothing}, \thetasu{a}, \omegu^a, \pisu{jk} } , \\
      \d p_{00} & \equiv \pisu{00} && \mod{\thetasu{\varnothing}, \thetasu{a}, \omegu^a, \pisu{jk}, \pisu{j0}} .
  \end{align*}

  In particular, the exact 1-forms $\d x^a, \d u, \d p_a, \d p_{ab}$ span the cotangent bundle of $M$ near $y$. Consequently, the functions $x^a, u, p_a, p_{ab}$ define an embedding of a neighborhood of $y$ into $J^2(\R^{n+1},\R)$.
  Identifying the functions $x^a, u, p_a, p_{ab}$ with the corresponding jet coordinates via pullback, it is straightforward that the embedding is an EDS morphism. By simple dimension count, there is a single relation
  \[ F(x^a, u, p_a, p_{ab}) = 0 \]
  whose zero locus is the image of $M$. In other words, $(M,\I)$ is locally defined near $y$ by the differential equation $F$.

  It remains to show that $F$ defines an evolutionary parabolic equation.
  Consider the pullback of $\d F$ to $M$,
  \[ 0 = \d F \equiv \pDeriv{F}{p_{0}} \d p_{0} + \pDeriv{F}{p_{ij}} \d p_{ij} + \pDeriv{F}{p_{i0}} \d p_{i0} + \pDeriv{F}{p_{00}} \d p_{00} \mod{\thetasu{\varnothing},\thetasu{i}, \omegu^a} \]
  First, note that the forms $\pisu{0a}$ are independent of the rest of the coframe forms, so
  \[ 0 = \d F \equiv \pDeriv{F}{p_{00}}\pisu{00} \mod{\thetasu{\varnothing},\thetasu{a}, \omegu^a, \pisu{aj}} \]
  gives $\pDeriv{F}{p_{00}} = 0$ and then
  \[ 0 = \d F \equiv \pDeriv{F}{p_{i0}}(A'^{-1})_i^j \pisu{j0} \mod{\thetasu{\varnothing},\thetasu{a}, \omegu^a, \pisu{jk}} \]
  gives
  \[ \pDeriv{F}{p_{a0}} = 0 . \]
  Consequently,
  \[ F = F(x^a, u, p_a, p_{ij}) . \]

  Since there is exactly one linear relation between the forms of the coframing, there must be a non-vanishing function $\lambda$ so that
  \begin{align*}
      \lambda(\thetasu{0} - \pisu{ii}) \equiv \d F & \equiv \pDeriv{F}{p_{0}} \d p_{0} + \pDeriv{F}{p_{ij}} \d p_{ij} && \mod{\thetasu{\varnothing},\thetasu{i}, \omegu^a} \\
      & \equiv \pDeriv{F}{p_{0}} \thetasu{0} + \pDeriv{F}{p_{ij}} (A^{-1})_i^k \pisu{kl} (A^{-1})^l_j &&
  \end{align*}
  % Now, the ideal
  % \begin{equation}
  %     \{ \thetasu{\varnothing}, \thetasu{i}, \omegu^a, \pisu{ij} \}
  % \end{equation}
  % is well defined, independent of a choice of $2$-adapted coframing.
  It follows that
  \[ \lambda\thetasu{0} \equiv \pDeriv{F}{p_{0}}\thetasu{0} \mod{\thetasu{\varnothing},\thetasu{i}, \omegu^a, \pisu{ij}} , \]
  so that $\pDeriv{F}{p_{0}}$ is non-vanishing in a neighborhood of $y$. Then, by the implicit function theorem, $F$ is equivalent to an evolutionary equation
  \[ p_0 = G(x^a, u, p_i, p_{ij}) \]
  in a possibly smaller neighborhood of $y$.
\end{proof}

With only slight modification to the previous proof, one may show the following for Goursat parabolic systems.
\begin{proposition}
    For a real analytic parabolic system $(M,\I)$ in $n+1>3$ variables, the primary Monge-Amp\`ere invariants $(V^{0ab}_i)$ as well as classical and extended Goursat invariants vanish identically if and only if $M$ is locally equivalent to a sub-elliptic equation of the form
    \[ F(x^a, u , p_i, p_{ij}) = 0 . \]
\end{proposition}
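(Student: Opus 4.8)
The plan is to follow the proof of Theorem~\ref{thm:integrable characteristics} essentially line by line, replacing the evolutionary normalization $\pisu{ii} = \thetasu{0}$ by the Goursat one $\pisu{ii} = 0$; the substitution has a visible effect at exactly one place, and that is what replaces the conclusion $p_0 = G(\dots)$ by an equation that does not involve $p_0$ at all.

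For the implication from the equation to the invariants, I would argue as in the corresponding step of Theorem~\ref{thm:integrable characteristics}. Starting from a local equivalence of $M$ with $F(x^a,u,p_i,p_{ij}) = 0$ and the $0$-adapted jet coframings of Example~\ref{ex:jet formulation of parabolic}, one uses that the symbol is independent of every $p_{a0}$ and of $p_0$ to normalize (diagonalizing $(\pDeriv{F}{p_{ij}})$ by a spatial rotation, which fixes $\omegu^0 = \d x^0$) a coframing in which $\pisu{ii} = 0$, $\xi_i^0 \equiv 0 \mod{\cartTheta}$, and $\d\omegu^0 \equiv 0 \mod{\omegu^0}$. By the analyses of Sections~\ref{sec:MA invariants} and~\ref{sec:Goursat invts} the condition $\xi_i^0 \equiv 0 \mod{\cartTheta}$ is precisely the vanishing of the primary Monge-Amp\`ere invariants $(V^{0ab}_i)$ together with the classical and the extended Goursat invariants $\goursam_{ij}$.

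For the converse, I would first note that the proof of the Lemma immediately preceding Theorem~\ref{thm:integrable characteristics} never uses the value of $\pisu{ii}$, so it applies verbatim to give a $2$-adapted coframing with $\pisu{ii} = 0$ for which $\d\omegu^0 \equiv 0 \mod{\omegu^0}$. From here the construction in the proof of Theorem~\ref{thm:integrable characteristics} carries over without change: rescale $\omegu^0$ to an exact $\d x^0$; apply the relative Darboux theorem to obtain $x^i, u, p_a$ with $\thetasu{\varnothing} = \d u - p_i\d x^i - p_0\d x^0$; run the symplectic normalization relating $\thetasu{a},\omegu^a$ to $\d p_b,\d x^b$ and define the functions $p_{ab}$; check that $x^a, u, p_a, p_{ab}$ give an EDS embedding of a neighborhood of the point into $J^2(\R^{n+1},\R)$ whose image is cut out by a single relation $F = 0$; and, using that the $\pisu{0a}$ are independent of the remaining coframe forms, conclude $\pDeriv{F}{p_{00}} = \pDeriv{F}{p_{i0}} = 0$, so that $F = F(x^a, u, p_a, p_{ij})$.

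The one new step is the final one. The $\dim M + 1$ forms $\thetasu{\varnothing},\thetasu{a},\omegu^a,\pisu{ab}$ satisfy a unique linear relation, which in a $2$-adapted coframing with $\pisu{ii} = 0$ is precisely $\pisu{ii}$; unlike the relation $\thetasu{0} - \pisu{ii}$ occurring in the evolutionary case, it carries no $\thetasu{0}$. Since $\d F$ pulls back to zero on $M$, it is a function multiple $\lambda\,\pisu{ii}$ of this relation, and comparing with
\[ \d F \equiv \pDeriv{F}{p_{0}}\thetasu{0} + \pDeriv{F}{p_{ij}}(A^{-1})_i^k\pisu{kl}(A^{-1})^l_j \mod{\thetasu{\varnothing},\thetasu{i},\omegu^a} , \]
together with the independence of $\thetasu{0}$ from the forms on the right, forces $\pDeriv{F}{p_0} = 0$ — so $F = F(x^a, u, p_i, p_{ij})$ — and $\pDeriv{F}{p_{ij}} = \lambda\,(A'\,\trans{A'})^{ij}$. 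As $A'$ is invertible, $(A'\,\trans{A'})$ is positive definite; since the symbol of the parabolic system $M$ is positive semi-definite, $\lambda > 0$, so $(\pDeriv{F}{p_{ij}})$ is positive definite and $M$ is locally equivalent to the sub-elliptic equation $F = 0$. The point requiring care — exactly as in Theorem~\ref{thm:integrable characteristics}, and also the place where the hypothesis of vanishing classical Goursat invariant is consumed — is unwinding the block-triangular change of coframe and the torsion absorptions carefully enough to be sure the unique coframe relation contains no $\thetasu{0}$; the argument, as there, requires $n+1 > 3$.
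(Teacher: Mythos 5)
Your proposal is correct and follows essentially the same route as the paper: the paper's own proof simply re-runs the argument of Theorem \ref{thm:integrable characteristics} with the Goursat normalization $\pisu{ii}=0$, noting that the unique coframe relation becomes $\lambda\,\pisu{ii}$ (with no $\thetasu{0}$ term), which forces $\pDeriv{F}{p_0}=0$ exactly as in your final step. Your added verification that $(\pDeriv{F}{p_{ij}})$ is definite is a harmless elaboration the paper leaves implicit.
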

\begin{proof}
    The only change that need be made from the previous proof is to note that in the Goursat case, $\pisu{ii} = 0$ on $M$, so there must be a non-vanishing function $\lambda$ such that
    \[ \lambda\pisu{ii} \equiv \d F \equiv \pDeriv{F}{p_{0}} \d p_{0} + \pDeriv{F}{p_{ij}} \d p_{ij} \mod{\thetasu{\varnothing},\thetasu{i}, \omegu^a} \]
    Thus
    \[ 0 \equiv \d F \equiv \pDeriv{F}{p_{0}} \d p_{0} \equiv \pDeriv{F}{p_{0}}\thetasu{0} \mod{\thetasu{\varnothing},\thetasu{i}, \omegu^a, \pisu{ij}} , \]
    and it follows that $\pDeriv{F}{p_{0}}$ is identically zero.
\end{proof}

\section{Further Directions}
This paper does not fully resolve the equivalence problem for parabolic equations, and there are certainly deeper invariants. As is typical in equivalence problems, the number of posibilities grows into exponentially many cases, and one should not expect to explicate all in a reasonable amount of time. Instead, one may hope to draw out the most geometrically interesting invariants.

Likely the most interesting branch of the equivalence problem to follow would be the Monge-Amp\`ere systems. In particular, it seems that the case of linear type Monge-Amp\`ere equations will turn out to be worthy of further study. The notion of linear type can be defined for \nth{2}-order equations of arbitrary symbol type, and indeed, are this is the class of Monge-Amp\`ere equations where symbol type is most simply defined.

One reason to focus on Monge-Amp\`ere equations is that they are good candidates for non-linear equations with large, even infinite, families of conservation laws. Indeed, the results of the follow up paper to this one (\cite{McMillan:ParabolicsII}) may be interpreted as evidence for this supposition, for it is shown there that any evolutionary parabolic equation with even a single conservation is Monge-Amp\`ere.

\bibliographystyle{amsplain}
\bibliography{references}

\begin{thebibliography}{1}

\bibitem{Goursat:SurLIntegration}
Le{\c{c}}ons sur l'integration des {\'e}quations aux d{\'e}riv{\'e}es
  partielles du second ordre a deux variables ind{\'e}pendantes.
\newblock {\em Monatshefte f{\"u}r Mathematik und Physik}, 9(1):A29--A30, Dec
  1898.

\bibitem{BCGGG}
R.L. Bryant, S.S. Chern, R.B. Gardner, H.L. Goldschmidt, and P.A. Griffiths.
\newblock {\em Exterior Differential Systems}.
\newblock Mathematical Sciences Research Institute Publications. Springer New
  York, 2013.

\bibitem{BryantGriffithsGrossman:EDSandEulerLagrangePDEs}
Robert Bryant, Phillip Griffiths, and Daniel Grossman.
\newblock {\em Exterior differential systems and {E}uler-{L}agrange partial
  differential equations}.
\newblock Chicago Lectures in Mathematics. University of Chicago Press,
  Chicago, IL, 2003.

\bibitem{BryantGriffithsHsu:TowardAGeometryofDifferentialEquations}
Robert Bryant, Phillip Griffiths, and Lucas Hsu.
\newblock Toward a geometry of differential equations.
\newblock In {\em Geometry, topology, \& physics}, Conf. Proc. Lecture Notes
  Geom. Topology, IV, pages 1--76. Int. Press, Cambridge, MA, 1995.

\bibitem{Characteristic_Cohomology_II}
Robert~L. Bryant and Phillip~A. Griffiths.
\newblock Characteristic cohomology of differential systems {II}: Conservation
  laws for a class of parabolic equations.
\newblock {\em Duke Math. J.}, 78(3):531--676, 06 1995.

\bibitem{Gardner:MethodOfEquivalence}
R.~Gardner.
\newblock {\em The Method of Equivalence and Its Applications}.
\newblock Society for Industrial and Applied Mathematics, 1989.

\bibitem{McMillan:ParabolicsII}
Benjamin~B. {McMillan}.
\newblock {Geometry and Conservation Laws for a Class of Second-Order Parabolic
  Equations II: Conservation Laws}.
\newblock {\em ArXiv e-prints}, page arXiv:1810.02346, October 2018.

\bibitem{Clelland_Thesis}
Jeanne N.~Clelland.
\newblock Geometry of conservation laws for a class of parabolic partial
  differential equations.
\newblock {\em Selecta Mathematica}, 3(1):1--77, 1997.

\end{thebibliography}
\end{document}